\theoremstyle{plain}
\newtheorem{Pocz}{Poczatek}[section]
\newtheorem{Proposition}[Pocz]{Proposition}
\newtheorem{Theorem}[Pocz]{Theorem}
\newtheorem{Corollary}[Pocz]{Corollary}
\newtheorem{Lemma}[Pocz]{Lemma}
\newtheorem{Question}[Pocz]{Question}
\newtheorem{Example}[Pocz]{Example}
\theoremstyle{definition}
\newtheorem{Definition}[Pocz]{Definition}
\theoremstyle{remark}
\newtheorem{Remark}[Pocz]{Remark}
\def\asdim{\mathrm{asdim}}
\DeclareMathOperator*{\st}{st}
\numberwithin{equation}{section}
\author{Jerzy~Dydak}
\address{University of Tennessee, Knoxville, USA}
\email{jdydak@utk.edu}
\author{Thomas ~ Weighill}
\address{University of Tennessee, Knoxville, USA}
\email{tweighil@vols.utk.edu}
\title[Monotone-light factorizations in coarse geometry
]%
  {Monotone-light factorizations in coarse geometry}
\date{ \today
}
\keywords{}
\subjclass{51F99, 18B30, 54C10, 54F45}
\begin{document}
%\fontsize{18}{20pt}\selectfont

%\begin{center}
%\today
%\end{center}

\begin{abstract}
We introduce large scale analogues of topological monotone and light maps, which we call coarsely monotone and coarsely light maps respectively. We show that these two classes of maps constitute a factorization system on the coarse category. We also show how coarsely monotone maps arise from a reflection in a similar way to classically monotone maps, and prove that coarsely monotone maps are stable under those pullbacks which exist in the coarse category. For the case of maps between proper metric spaces, we exhibit some connections between the coarse and classical notions of monotone and light using the Higson corona. Finally, we look at some coarse properties which are preserved by coarsely light maps such as finite asymptotic dimension and exactness, and make some remarks on the situation for groups and group homomorphisms.
\end{abstract}

\maketitle

\section{Introduction}
Recall that continuous map from a compact Hausdorff space $X$ to a compact Hausdorff space $Y$ is called \textbf{monotone} if it is surjective and each of its fibres is connected, and is called \textbf{light} if each of its fibres is totally disconnected (see for example~\cite{Whyburn}). Eilenberg showed in \cite{Eilen} that every continuous map $f$ between compact metric spaces factorizes as $f = me$, where $m$ is light and $e$ is monotone (in fact, the result holds more generally for compact Hausdorff spaces, see~\cite{CarJanKelPar}). Moreover, this factorization satisfies a universal property, namely that for any commutative diagram
$$ \xymatrix{
\bullet \ar[r]^e \ar[d]_u & \bullet  \ar[r]^m & \bullet \ar[d]^v \\
\bullet \ar[r]_{e'} & \bullet \ar[r]_{m'} & \bullet \\
}
$$
 where the arrows are continuous maps and the objects are compact Hausdorff spaces, with $e'$ monotone and $m'$ light, 
there is a unique continuous map $h$ making the diagram commute:
$$ \xymatrix{
\bullet \ar[r]^e \ar[d]_u & \bullet \ar@{..>}[d]^h \ar[r]^m & \bullet \ar[d]^v \\
\bullet \ar[r]_{e'} & \bullet \ar[r]_{m'} & \bullet \\
}
$$
In the language of category theory, this is to say that the classes of monotone maps and light maps constitute a \textbf{factorization system}~\cite{FreKel} on the category of compact Hausdorff spaces and continuous maps.

In coarse geometry one is interested in the large scale properties of metric (or more general) spaces, or in other words, properties of spaces ``as viewed from a great distance'' \cite{NowakYu}. For example, the metric spaces $\mathbb{R}$ and $\mathbb{Z}$ are ``coarsely equivalent'', that is, isomorphic in the coarse category, despite being very different as topological spaces. The motivation to study large scale behaviour comes mostly from geometric group theory and index theory (see for example \cite{Gromov93} and \cite{RoeIndex} respectively). Many classical topological notions have large scale analogues; for example, Gromov introduced the notion of asymptotic dimension in \cite{Gromov93}, which, when defined in terms of covers of a space, is clearly analogous to the covering dimension of a topological space. It was later shown that for proper metric spaces, the asymptotic dimension coincides with the (topological) covering dimension of the Higson corona (a topological space that captures large scale behaviour of a metric space) whenever the former is finite~\cite{Dranishnikov00}. This gives another connection between the large scale and topological notions of dimension.

In this paper, we introduce large scale analogues of the topological monotone and light maps mentioned above, to which we give the names coarsely monotone and coarsely light maps respectively. These classes of maps will turn out to constitute a factorization system on the coarse category (defined in the next section). A large part of the paper is devoted to making some connections between the topological and large scale notions of monotone and light. We do so in two ways. Firstly, we examine these classes of maps from a categorical perspective inspired by the results in \cite{CarJanKelPar}. Secondly, we make some connections using the Higson corona in the case when the large scale spaces involved are proper metric spaces. Coarsely light maps generalize both coarse embeddings and coarsely n-to-1 maps; we prove that coarsely light maps preserve certain coarse properties such as finite asymptotic dimension and Yu's Property A in a similar way to these classes of maps. In the final section of the paper, we make some remarks on coarsely monotone and light maps between groups.

The main goal of this paper is to introduce two interesting classes of maps between large scale spaces and study some of their properties. Along the way, however, we also investigate some of the structure of the coarse category and apply some basic categorical arguments to large scale spaces and maps between them. It would be interesting to see what other categorical notions turn out to be useful in the study of large scale spaces.

\section{Preliminaries}
We recall some basic terminology from \cite{DH}. Let $X$ be a set. Recall that the \textbf{star} $\st(B, \mathcal{U})$ of a subset $B$ of $X$ with respect to a family $\mathcal{U}$ of subsets of $X$ is the union of those elements of $\mathcal{U}$ that intersect $B$. More generally, for two families $\mathcal{B}$ and $\mathcal{U}$ of subsets of X, $\st(\mathcal{B}, \mathcal{U})$ is the family $\{\st(B, \mathcal{U}) \mid B \in \mathcal{B}\}$.

\begin{Definition}
A \textbf{large scale structure} $\mathcal{L}$ on a set $X$ is a nonempty
set of families $\mathcal{B}$ of subsets of $X$ (which we call the \textbf{uniformly bounded families} in $X$) satisfying the following conditions:
\begin{itemize}
\item[(1)] $\mathcal{B}_1 \in \mathcal{L}$ implies $\mathcal{B}_2 \in \mathcal{L}$ if each element of $\mathcal{B}_2$ consisting of more than one point is contained in some element of $\mathcal{B}_1$.
\item[(2)] $\mathcal{B}_1, \mathcal{B}_2 \in \mathcal{L}$ implies $\st(\mathcal{B}_1, \mathcal{B}_2) \in \mathcal{L}$.
\end{itemize}
\end{Definition}

\begin{Remark}
Note that any uniformly bounded family $\mathcal{U}$ can be extended to a cover which is also uniformly bounded by adding singleton sets to the family (we call this cover the \textbf{trivial extension} of $\mathcal{U}$), so we will often assume that a given family is in fact a cover for convenience. Note that a family $\mathcal{U}$ of subsets of $X$ refines $\st(\mathcal{U}, \mathcal{V})$ for any cover $\mathcal{V}$ of $X$.
\end{Remark}

By a large scale space (or ls-space for short), we mean a set equipped with a large scale structure. A subset of a large scale space $X$ is called \textbf{bounded} if it is an element of some uniformly bounded family in $X$. A classical example of a large scale space is as follows. Let $(X, d)$ be an $\infty$-metric space. Define the uniformly bounded families in $X$ to be all those families $\mathcal{U}$ for which there is a $M > 0$ such that every element of $\mathcal{U}$ has diameter at most $M$. In fact, a large scale structure on $X$ arises in this way from an $\infty$-metric on $X$ if and only if it is countably generated~\cite{DH}. We call such a large scale structure \textbf{metrizable}. 

Given a set map $f: X \rightarrow Y$ from an ls-space $X$ to an ls-space $Y$, we say that $f$ is \textbf{large scale continuous} or \textbf{ls-continuous} if for every uniformly bounded family $\mathcal{U}$ in $X$, the family
$$
f(\mathcal{U}) = \{f(U) \mid U \in \mathcal{U} \}
$$
is uniformly bounded in $Y$. Given two set maps $f,g: X \rightarrow Y$, we say that $f$ and $g$ are \textbf{close} and write $f \sim g$ if the family
$$
\{\{f(x), g(x)\} \mid x \in X \}
$$
is uniformly bounded, in which case we say that this family (or any uniformly bounded family which it refines) \textbf{witnesses} the closeness of $f$ and $g$. 

Certain types of ls-continuous maps are worth mentioning.  Recall that a map $f: X \rightarrow Y$ between ls-spaces is called \textbf{coarsely surjective} if there is a uniformly bounded family $\mathcal{U}$ in $Y$ such that $Y \subseteq \st(f(X), \mathcal{U})$. An ls-continuous map $f$ is called a \textbf{coarse equivalence} if there is an ls-continuous map $f'$ in the other direction such that $ff'$ and $f'f$ are both close to the identity. An ls-continuous map $f:X \rightarrow Y$ is called a \textbf{coarse embedding} if for every uniformly bounded family $\mathcal{U}$ in $Y$, $f^{-1}(\mathcal{U}) = \{f^{-1}(U) \mid U \in \mathcal{U} \}$ is uniformly bounded in $X$. It is easy to check that an ls-continuous map is a coarse equivalence if and only if it is coarsely surjective and a coarse embedding. 

We are now ready to introduce the category on which we will construct a factorization system. In the present paper, by the \textbf{coarse category} we mean the category whose objects are large scale spaces and whose morphisms are equivalence classes of ls-continuous maps under the closeness relation. Note that this differs from Roe's coarse category in~\cite{RoeIndex}, where the maps are further required to be proper (an ls-continuous map is \textbf{proper} if the inverse image of every bounded set is bounded), although similar results will hold for this category as well (see Remark \ref{RoeCoarse}). It is easy to check that composition is well-defined in the coarse category, that is, if $f \sim g$ and $h \sim j$, then $hf \sim jg$ whenever these composites are defined. Note that the isomorphisms in the coarse category are represented by coarse equivalences.

For a set $X$, a family $\mathcal{U}$ of subsets of $X$ and $x, x' \in X$, we write $x \mathcal{U} x'$ to mean that there is an element of $\mathcal{U}$ containing both $x$ and $x'$. We say that $x$ and $x'$ are \textbf{$\mathcal{U}$-connected} if there is a finite sequence $x = x_1,\ x_2,\ \ldots,\ x_k = x'$ of elements of $X$ with $x_i \mathcal{U} x_{i+1}$. Equivalence classes under the relation ``$x$ is $\mathcal{U}$-connected to $x'$'' will be called $\mathcal{U}$-\textbf{components}. For $\mathcal{U}$ and $\mathcal{V}$ two families of subsets of $X$, we write $\mathcal{U} \leq \mathcal{V}$ in case $\mathcal{U}$ refines $\mathcal{V}$, in which case we also say that $\mathcal{V}$ \textbf{coarsens} $\mathcal{U}$.

\section{Coarsely light maps}\label{light}
In this section we introduce the large scale analogue of topological light maps.

\begin{Definition}
Let $f: X \rightarrow Y$ be an ls-continuous map between ls-spaces. For every pair of uniformly bounded families $\mathcal{U}$ in $X$ and $\mathcal{V}$ in $Y$, denote by $c(\mathcal{U}, f, \mathcal{V})$ the family of subsets consisting of all $\mathcal{U}$-components of elements of $f^{-1}(\mathcal{V})$. The closure under refinement of the set of all such $c(\mathcal{U}, f, \mathcal{V})$ is called the \textbf{light structure on $X$ with respect to $f$}. 
\end{Definition}

\begin{Proposition}
Let $f: X \rightarrow Y$ be an ls-continuous map between ls-spaces. Then the light structure on $X$ with respect to $f$ is an ls-structure which contains the ls-structure on $X$.
\end{Proposition}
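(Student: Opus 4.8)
The plan is to verify the two axioms of a large scale structure directly for the light structure $\mathcal{L}_f$, and then check the containment. Since $\mathcal{L}_f$ is by definition closed under refinement (condition (1) is automatic once we also add that families consisting of singletons cause no trouble — any family each of whose non-singleton members sits inside a member of some $c(\mathcal{U},f,\mathcal{V})$ is, by the closure-under-refinement definition, again in $\mathcal{L}_f$), the only real content is condition (2): given two uniformly bounded families $\mathcal{U}$ and $\mathcal{V}$ in $X$ lying in $\mathcal{L}_f$, we must show $\st(\mathcal{U},\mathcal{V}) \in \mathcal{L}_f$. By the refinement-closure, it suffices to treat the generating case, i.e. to show that for $\mathcal{U}_1 = c(\mathcal{U}, f, \mathcal{V})$ and $\mathcal{U}_2 = c(\mathcal{U}', f, \mathcal{V}')$ (with $\mathcal{U},\mathcal{U}'$ uniformly bounded in $X$ and $\mathcal{V},\mathcal{V}'$ uniformly bounded in $Y$), the star $\st(\mathcal{U}_1, \mathcal{U}_2)$ refines some $c(\mathcal{U}'', f, \mathcal{V}'')$.

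The key observation driving the argument is that $\mathcal{U}$-connectedness interacts well with stars: if $A$ is a $\mathcal{U}$-component of some $f^{-1}(V)$ and $B$ is a $\mathcal{U}'$-component of some $f^{-1}(V')$ with $A \cap B \neq \emptyset$, then $A \cup B$ is contained in a single $\mathcal{W}$-component of $f^{-1}(V'')$, where $\mathcal{W} = \st(\mathcal{U} \cup \mathcal{U}', \mathcal{U} \cup \mathcal{U}')$ (or a similar star, which is uniformly bounded in $X$ by axiom (2) for the ls-structure on $X$) and $V'' \in \st(\mathcal{V}, \mathcal{V}')$ is chosen to contain $f(A)\cup f(B) \subseteq V \cup V'$ — here we use that $f(A) \subseteq V$ and $f(B) \subseteq V'$ meet (since $A \cap B \neq \emptyset$), so $V$ and $V'$ both meet, hence lie in, some common star element $V''$. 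Iterating this over all elements of $\mathcal{U}_2$ meeting a fixed $A \in \mathcal{U}_1$: each such element is $\mathcal{W}$-connected into $A$ and maps into an element of $\st(\mathcal{V},\mathcal{V}')$ containing $f(A)$, so the whole star $\st(A, \mathcal{U}_2)$ lies in a single $\mathcal{W}$-component of $f^{-1}(V'')$ for an appropriate $V''$. Thus $\st(\mathcal{U}_1, \mathcal{U}_2)$ refines $c(\mathcal{W}, f, \st(\mathcal{V},\mathcal{V}'))$, which lies in $\mathcal{L}_f$.

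For the containment of the ls-structure of $X$ in $\mathcal{L}_f$: given a uniformly bounded family $\mathcal{B}$ in $X$, I would take $\mathcal{U} = \mathcal{B}$ itself and $\mathcal{V} = f(\mathcal{B})$, which is uniformly bounded in $Y$ by ls-continuity of $f$. Every $B \in \mathcal{B}$ satisfies $f(B) \subseteq f(B) \in \mathcal{V}$, so $B \subseteq f^{-1}(f(B))$; moreover $B$ is $\mathcal{B}$-connected (indeed $\mathcal{U}$-small), so $B$ lies inside a single $\mathcal{U}$-component of $f^{-1}(f(B))$. Hence $\mathcal{B}$ refines $c(\mathcal{B}, f, f(\mathcal{B}))$, so $\mathcal{B} \in \mathcal{L}_f$ by refinement-closure.

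The main obstacle I anticipate is bookkeeping in the star computation: one must choose the auxiliary families $\mathcal{W}$ in $X$ and $\mathcal{V}''$ in $Y$ uniformly — i.e. a single $\mathcal{W}$ and a single uniformly bounded family $\mathcal{V}''$ in $Y$ that work for \emph{all} elements of $\st(\mathcal{U}_1,\mathcal{U}_2)$ simultaneously — and verify that chaining $\mathcal{U}$-components through elements of $\mathcal{U}_2$ only increases the connecting scale by one application of $\st$, not by an unbounded amount. The fact that a star $\st(B,\mathcal{U}_2)$ involves, at each point of $B$, only elements of $\mathcal{U}_2$ that already meet $B$ keeps the scale controlled; making this precise (that being in the star of a $\mathcal{U}$-component lands you in a bounded-scale component upstairs) is the crux, and everything else is routine verification.
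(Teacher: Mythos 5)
Your proposal is correct and takes essentially the same approach as the paper: the heart of both arguments is the refinement $\st(c(\mathcal{U},f,\mathcal{V}),\,c(\mathcal{U}',f,\mathcal{V}')) \leq c(\mathcal{W},f,\st(\mathcal{V},\mathcal{V}'))$ with $\mathcal{W}$ a star-coarsening of $\mathcal{U}\cup\mathcal{U}'$ (the paper uses $\st(\mathcal{U},\mathcal{U}')$ and calls the verification ``easy to check''), and the containment is handled identically via $\mathcal{U}\leq c(\mathcal{U},f,f(\mathcal{U}))$. You merely spell out the chaining and cover-extension bookkeeping that the paper leaves implicit.
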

\begin{proof}
Let $c(\mathcal{U}, f, \mathcal{V})$ and $c(\mathcal{U}', f, \mathcal{V}')$ be two elements of the light structure. We may suppose that $\mathcal{U}$, $\mathcal{U}'$, $\mathcal{V}$ and $\mathcal{V}'$ are covers and that $\mathcal{V}$ and $\mathcal{V}'$ coarsen $f(\mathcal{U})$ and $f(\mathcal{U}')$ respectively. It is easy to check that 
$$\st(c(\mathcal{U}, f, \mathcal{V}), c(\mathcal{U}', f, \mathcal{V}')) \leq c(\st(\mathcal{U}, \mathcal{U}'), f, \st(\mathcal{V}, \mathcal{V}')).$$
It follows that the light structure is an ls-structure. To see that it contains the ls-structure on $X$, note that for a uniformly bounded cover $\mathcal{U}$ of $X$, we have $\mathcal{U} \leq c(\mathcal{U}, f, f(\mathcal{U}))$.
\end{proof}

\begin{Proposition}
If $f: X \rightarrow Y$ is a map between metrizable coarse spaces, then the light structure on $X$ with respect to $f$ is also metrizable. 
\end{Proposition}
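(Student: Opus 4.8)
The plan is to exploit the characterization recalled in the preliminaries: a large scale structure is metrizable if and only if it is countably generated. Since $X$ is a metrizable coarse space, its ls-structure is generated by a countable chain $\mathcal{U}_1 \leq \mathcal{U}_2 \leq \cdots$ of uniformly bounded covers (coming from the balls of radius $1, 2, 3, \ldots$ in an $\infty$-metric inducing the structure), and similarly the ls-structure on $Y$ is generated by a countable chain $\mathcal{V}_1 \leq \mathcal{V}_2 \leq \cdots$. The claim will follow once we show that the light structure on $X$ with respect to $f$ is generated, under closure by refinement, by the countable family $\{c(\mathcal{U}_i, f, \mathcal{V}_j)\}_{i,j \in \NN}$.

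First I would observe that, by the proof of the previous proposition, the light structure is closed under the star operation and hence is genuinely an ls-structure; in particular finite "joins" of the generators $c(\mathcal{U}_i, f, \mathcal{V}_j)$ again lie in the light structure, so the countable family above generates a legitimate sub-ls-structure. Next, I would take an arbitrary element of the light structure, i.e. a family refining some $c(\mathcal{U}, f, \mathcal{V})$ with $\mathcal{U}$ uniformly bounded in $X$ and $\mathcal{V}$ uniformly bounded in $Y$. Since the $\mathcal{U}_i$ generate the ls-structure on $X$, we have $\mathcal{U} \leq \mathcal{U}_i$ for some $i$ (after passing to the trivial extension so that everything is a cover), and likewise $\mathcal{V} \leq \mathcal{V}_j$ for some $j$. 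The key monotonicity fact to check is that
$$ c(\mathcal{U}, f, \mathcal{V}) \leq c(\mathcal{U}_i, f, \mathcal{V}_j) $$
whenever $\mathcal{U} \leq \mathcal{U}_i$ and $\mathcal{V} \leq \mathcal{V}_j$: a $\mathcal{U}$-component of an element of $f^{-1}(\mathcal{V})$ is $\mathcal{U}_i$-connected (since $\mathcal{U} \le \mathcal{U}_i$) and sits inside an element of $f^{-1}(\mathcal{V})$, which in turn is contained in an element of $f^{-1}(\mathcal{V}_j)$; hence it is contained in a single $\mathcal{U}_i$-component of that element of $f^{-1}(\mathcal{V}_j)$. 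This shows every generator of the light structure is refined by one of the countably many $c(\mathcal{U}_i, f, \mathcal{V}_j)$, so the light structure is countably generated and therefore metrizable.

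I expect the routine but slightly delicate point to be the bookkeeping around covers versus families and around singletons: the definition of refinement in a large scale structure (condition (1)) only constrains the elements with more than one point, so when I write $\mathcal{U} \le \mathcal{U}_i$ I should be careful that the $\mathcal{U}$-components I care about are the nontrivial ones, or simply pass to trivial extensions throughout so that all families in sight are honest covers; then the containment argument above is literally a statement about connectedness in a graph and poses no difficulty. The only genuinely external input is the cited equivalence "metrizable $\iff$ countably generated" from \cite{DH}, which I am assuming; everything else is a short verification building on the preceding proposition.
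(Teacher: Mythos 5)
Your proof is correct and follows essentially the same route as the paper: both reduce metrizability to countable generation, choose countable cofinal families of uniformly bounded covers $(\mathcal{U}_i)$ and $(\mathcal{V}_j)$ on $X$ and $Y$, and observe that every $c(\mathcal{U}, f, \mathcal{V})$ refines some $c(\mathcal{U}_i, f, \mathcal{V}_j)$. The monotonicity verification you spell out is exactly the step the paper leaves implicit in its ``It follows that'' sentence.
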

\begin{proof}
Since the ls-structures on $X$ and $Y$ are generated by metrics, we may assume that there are countable families $(\mathcal{U}_i)_{i \in \mathbb{N}}$ and $(\mathcal{V}_i)_{i \in \mathbb{N}}$ of uniformly bounded covers of $X$ and $Y$ respectively such that any uniformly bounded cover of $X$ refines some $\mathcal{U}_i$ and any uniformly bounded cover of $Y$ refines some $\mathcal{V}_i$. It follows that the light structure on $X$ with respect to $f$ is generated by the countable family $(c(\mathcal{U}_i, f, \mathcal{V}_j))_{i,j \in \mathbb{N}}$.
\end{proof}

\begin{Definition}
We say that an ls-continuous map $f: X \rightarrow Y$ is \textbf{coarsely light} if the light structure on $X$ with respect to $f$ coincides with the ls-structure on $X$.
\end{Definition}

If $X$ is an ls-space, and $A \subseteq X$ is a subset, then there is a natural ls-structure on $A$ induced by $X$, namely, all those families in $A$ which are uniformly bounded as families in $X$. We will call such an $A$ together with the induced structure a \textbf{subspace} of $X$. Given a collection $(A_\alpha)_{\alpha \in I}$ of subspaces of an ls-space $X$, let $\bigsqcup A_\alpha$ be the disjoint union of the $A_\alpha$, with the ls-structure given by all families $\mathcal{U}$ which satisfy the following conditions:
\begin{itemize}
\item[(1)] the image of $\mathcal{U}$ under the obvious map $p: \bigsqcup A_\alpha \rightarrow X$ is a uniformly bounded family;
\item [(2)] each member of $\mathcal{U}$ intersects at most one of the $A_\alpha$.
\end{itemize}
Using the above construction, we can formulate a generalization of the notion of uniform asymptotic dimension of subspaces of a metric space given in~\cite{BellDran}. A family $(A_\alpha)_{\alpha \in I}$ of subspaces of an ls-space $X$ \textbf{satisfies the inequality $\asdim \leq n$ uniformly} if $\asdim \bigsqcup A_\alpha \leq n$, that is, for every uniformly bounded cover $\mathcal{U}$ of $\bigsqcup A_\alpha$, there is a uniformly bounded cover $\mathcal{V}$ of $\bigsqcup A_\alpha$ which coarsens $\mathcal{U}$ and which has point multiplicity at most $n+1$. In particular, a space $X$ is of asymptotic dimension less than $n$ iff $\{X\}$ satisfies $\asdim \leq n$ uniformly. It is easy to see that an ls-space is of asymptotic dimension $0$ if and only if for every uniformly bounded family $\mathcal{U}$, the $\mathcal{U}$-components of $X$ form a uniformly bounded family.

\begin{Proposition}\label{lightequiv}
Let $f: X\rightarrow Y$ be an ls-continuous map. Then the following are equivalent:
\begin{itemize}
\item[(a)] $f$ is coarsely light,
\item[(b)] for any uniformly bounded cover $\mathcal{V}$ of $Y$, the family of subspaces $f^{-1}(\mathcal{V}) = \{f^{-1}(V) \mid V \in \mathcal{V}\}$ satisfies the inequality $\asdim \leq 0$ uniformly.
\end{itemize}
If $Y$ is a metric space with the induced ls-structure, then the above are further equivalent to
\begin{itemize}
\item[(c)] $\asdim f = 0$ in the sense of~\cite{BrodDyd}, that is, for every subspace $B \subseteq Y$ with $\asdim (B) = 0$, $\asdim f^{-1}(B) = 0$. 
\end{itemize}
\end{Proposition}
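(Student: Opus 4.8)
The plan is to prove (a)$\Leftrightarrow$(b) by a dictionary between the families $c(\mathcal{U},f,\mathcal{V})$ and the disjoint-union construction, to obtain (a)$\Rightarrow$(c) directly, and to prove the converse (c)$\Rightarrow$(a) when $Y$ is metric. First I would reduce (a): since the light structure on $X$ always contains the ls-structure of $X$ and is by definition the closure under refinement of the families $c(\mathcal{U},f,\mathcal{V})$, condition (a) holds iff every $c(\mathcal{U},f,\mathcal{V})$ is uniformly bounded in $X$, and by taking trivial extensions we may assume $\mathcal{U}$ and $\mathcal{V}$ are uniformly bounded covers. The key observation for (a)$\Leftrightarrow$(b) is that, for a uniformly bounded cover $\mathcal{U}$ of $X$, the family $\{U \cap f^{-1}(V) \mid U \in \mathcal{U},\ V \in \mathcal{V}\}$, with $U \cap f^{-1}(V)$ regarded as lying in the copy of $f^{-1}(V)$, is uniformly bounded on $\bigsqcup_{V \in \mathcal{V}} f^{-1}(V)$ and its components map under $p$ onto exactly the members of $c(\mathcal{U},f,\mathcal{V})$; conversely any uniformly bounded family on $\bigsqcup_{V \in \mathcal{V}} f^{-1}(V)$ has components refining (copies of) the members of $c(\mathcal{U},f,\mathcal{V})$ for $\mathcal{U}$ the trivial extension of its $p$-image. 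The point is that condition (2) in the definition of $\bigsqcup A_\alpha$ is automatic for components, so a family of components is uniformly bounded on $\bigsqcup_V f^{-1}(V)$ precisely when its $p$-image is uniformly bounded in $X$; combining this with the characterisation ``$\asdim \leq 0$ iff the components of every uniformly bounded family are uniformly bounded'' makes both directions of (a)$\Leftrightarrow$(b) routine.

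For (a)$\Rightarrow$(c), which needs no metric hypothesis: given $B \subseteq Y$ with $\asdim B = 0$ and a uniformly bounded cover $\mathcal{U}$ of $X$, the trace of $f(\mathcal{U})$ on $B$ is a uniformly bounded cover of $B$, so its components form a uniformly bounded family on $B$ which extends to a uniformly bounded cover $\mathcal{V}$ of $Y$. Since $f$ sends a $\mathcal{U}$-chain inside $f^{-1}(B)$ to an $f(\mathcal{U})$-chain inside $B$, each $\mathcal{U}$-component of $f^{-1}(B)$ lands in a single member of $\mathcal{V}$ and is thus contained in a member of $c(\mathcal{U},f,\mathcal{V})$; as $f$ is coarsely light this family is uniformly bounded in $X$, hence so are the $\mathcal{U}$-components of $f^{-1}(B)$, giving $\asdim f^{-1}(B) = 0$.

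The substantial direction is (c)$\Rightarrow$(a) with $Y$ metric, which I would prove by contradiction. Suppose (c) holds but some $c(\mathcal{U},f,\mathcal{V})$ is not uniformly bounded in $X$; as covers of $Y$ by $M$-balls are cofinal among uniformly bounded covers, take $\mathcal{V} = \{\overline{B}(y,M) \mid y \in Y\}$. For bounded $Z \subseteq Y$ the preimage $f^{-1}(Z)$ has $\asdim = 0$ by (c) (bounded metric spaces are $0$-dimensional), so the sub-family of $c(\mathcal{U},f,\mathcal{V})$ made of components of $f^{-1}(V)$ with $V$ meeting a fixed bounded region of $Y$ is uniformly bounded, hence discardable. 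Extracting accordingly, one finds components $C_k$ of $f^{-1}(V_k)$ which are not uniformly bounded as a family and with the $V_k$ escaping to infinity in $Y$; thinning so that $\dist(V_k,V_j)$ grows fast enough, $B := \bigcup_k \overline{B}(y_k,M)$ (with $y_k \in V_k$) is a uniformly bounded union of $M$-balls with diverging mutual gaps, hence a subspace of $Y$ with $\asdim B = 0$. But $f^{-1}(B) \supseteq \bigcup_k f^{-1}(V_k)$, so each $C_k$ sits in a $\mathcal{U}$-component of $f^{-1}(B)$, whence the $\mathcal{U}$-components of $f^{-1}(B)$ are not uniformly bounded, contradicting $\asdim f^{-1}(B) = 0$. (If $Y$ is bounded the statement is vacuous, as then $f^{-1}(Y) = X$ has $\asdim 0$.)

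The main obstacle is this last step, and it is exactly where metrizability of $Y$ is needed: one uses that ball-covers are cofinal, that bounded subsets of $Y$ are asymptotically $0$-dimensional (so the ``local'' part of $c(\mathcal{U},f,\mathcal{V})$ can be absorbed), and the freedom to build a genuine $0$-dimensional \emph{subspace} of $Y$ from bounded pieces with diverging gaps --- none of which holds for an arbitrary large scale space $Y$. The one technical nuisance is justifying the extraction of a countable, still non-uniformly-bounded sub-collection of $c(\mathcal{U},f,\mathcal{V})$; I would handle it by filtering $c(\mathcal{U},f,\mathcal{V})$ by the bounded regions $\overline{B}(y_0,n)$ of $Y$, noting that every stage of the filtration is uniformly bounded by the previous remark, so the failure of uniform boundedness must already be witnessed along a sequence of components whose controlling balls diverge.
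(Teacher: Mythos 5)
Your proof is correct and follows essentially the same route as the paper's: the same unwinding of definitions for (a)$\Leftrightarrow$(b), the same push-forward/pull-back argument for (a)$\Rightarrow$(c), and for (c)$\Rightarrow$(a) the same contradiction via absorbing the components over bounded regions of $Y$ (which the paper phrases as the case of infinitely many $V_i$ lying in a bounded set) and then passing to a sparse union of balls of asymptotic dimension $0$. Your worry about extracting a countable non-uniformly-bounded subfamily is resolved at exactly the same implicit level of generality as in the paper, which simply selects components of diameter greater than $i$ (tacitly treating $X$ as metrizable), so nothing further is needed to match the paper's argument.
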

\begin{proof}
(a) $\Leftrightarrow$ (b): To say that $f$ is coarsely light is precisely to say that the $\mathcal{U}$-components of the elements of $f^{-1}(\mathcal{V})$ form a uniformly bounded family for any uniformly bounded families $\mathcal{U}$ in $X$ and $\mathcal{V}$ of $Y$. This is clearly equivalent to (b). 

(a) $\Rightarrow$ (c): Suppose $B \subseteq Y$ has asymptotic dimension $0$, and let $\mathcal{U}$ be a uniformly bounded cover of $f^{-1}(B)$. Consider the uniformly bounded cover $f(\mathcal{U})$ of $B$. By hypothesis, the $f(\mathcal{U})$-components of $B$ form a uniformly bounded family $\mathcal{V}$. Thus the $\mathcal{U}$-components of the family $f^{-1}(\mathcal{V})$ form a uniformly bounded cover of $f^{-1}(B)$. But this cover is precisely the set of $\mathcal{U}$-components of $f^{-1}(B)$.

(c) $\Rightarrow$ (a): If $f$ is not coarsely light, then there is a family $c(\mathcal{U}, f, \mathcal{V})$ which is not uniformly bounded, with $\mathcal{U}$ and $\mathcal{V}$ uniformly bounded families in $X$ and $Y$ respectively. In particular, there is a sequence $V_1, V_2, \ldots $ of elements of $\mathcal{V}$ such that $f^{-1}(V_i)$ has a $\mathcal{U}$-component of diameter greater than $i$. If there is a bounded set $K$ in $Y$ containing infinitely many of the $V_i$, then $\asdim K = 0$, but $f^{-1}(K)$ has an unbounded $\mathcal{U}$-component, which contradicts (c). Thus every bounded set $K$ in $Y$ contains only finitely many of the $V_i$. Since the $V_i$ are uniformly bounded, we may choose a subsequence $W_i$ such that $d(W_i, W_{i+1}) > i$. The union $W = \bigcup W_i$ is clearly of asympotic dimension $0$, but its inverse image has an unbounded $\mathcal{U}$-component, which once again contradicts (c).
\end{proof}

\begin{Example}
The following are examples of coarsely light maps:
\begin{itemize}
\item any ls-continuous map $f: X \rightarrow Y$ where $\asdim X = 0$.
\item any ls-continuous map $f: X \rightarrow Y$ where $Y$ has bounded geometry (i.e.~where the elements of any uniformly bounded cover have bounded cardinality) and $$\mathsf{sup}\{|f^{-1}(y)| \mid y \in Y\} < \infty.$$
\item any coarse embedding, and in particular any coarse equivalence.
\end{itemize}
\end{Example}

Recall that an ls-continuous map $f: X \rightarrow Y$ is called \textbf{coarsely n-to-1} \cite{AustinThesis, MiyVirk} if for every uniformly bounded cover $\mathcal{V}$ of $Y$ there is a uniformly bounded cover $\mathcal{U}$ of $X$ such that each element of $f^{-1}(\mathcal{V})$ is contained in the union of $n$ elements of $\mathcal{U}$. 

\begin{Proposition}
If $f: X \rightarrow Y$ is coarsely n-to-1, then it is coarsely light.
\end{Proposition}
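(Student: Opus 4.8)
The plan is to unwind the definition: the light structure on $X$ with respect to $f$ is the closure under refinement of all families $c(\mathcal{U}, f, \mathcal{V})$, and it always contains the ls-structure on $X$, so $f$ is coarsely light precisely when each $c(\mathcal{W}, f, \mathcal{V})$ is a uniformly bounded family in $X$. Thus it suffices to fix uniformly bounded covers $\mathcal{W}$ of $X$ and $\mathcal{V}$ of $Y$ (we may take $\mathcal{W}$ to be a cover by passing to its trivial extension) and bound the $\mathcal{W}$-components of the sets $f^{-1}(V)$, $V\in\mathcal{V}$. Using that $f$ is coarsely $n$-to-$1$, apply the definition to $\mathcal{V}$ to get a uniformly bounded cover $\mathcal{U}$ of $X$ such that each $f^{-1}(V)$ is contained in a union $U_1\cup\cdots\cup U_n$ of at most $n$ members of $\mathcal{U}$.

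The crux is a combinatorial claim about one such set. Fix $V$, write $f^{-1}(V)\subseteq U_1\cup\cdots\cup U_n$ as above, and let $C$ be a $\mathcal{W}$-component of $f^{-1}(V)$. Put $I=\{i\le n : C\cap U_i\neq\emptyset\}$ and form a graph $G$ on $I$, joining $i$ to $j$ whenever some member of $\mathcal{W}$ meets both $C\cap U_i$ and $C\cap U_j$. I claim $G$ is connected: if not, $I$ splits into two nonempty parts with no edge between them, and the induced partition of $C$ into two nonempty pieces would be $\mathcal{W}$-separated (no member of $\mathcal{W}$ meeting both), contradicting that $C$ is $\mathcal{W}$-connected — here one uses that a $\mathcal{W}$-chain inside $f^{-1}(V)$ joining two points of $C$ stays inside $C$. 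Now enumerate $I=\{j_0,j_1,\ldots,j_m\}$ with $m\le n-1$, in an order traversing a spanning tree of $G$, so that each $j_l$ with $l\ge 1$ is $G$-adjacent to some $j_{l'}$ with $l'<l$. Adjacency of $j_l$ and $j_{l'}$ forces $U_{j_l}$ to meet $\st(U_{j_{l'}}, \mathcal{W})$, so an easy induction on $l$ shows that $U_{j_l}$ lies in the set $S_l$ obtained from $U_{j_0}$ by applying the operations $A\mapsto\st(A,\mathcal{W})$ and $A\mapsto\st(A,\mathcal{U})$ alternately, $l$ times each. In particular $C\subseteq S_{n-1}$.

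To conclude, let $\mathcal{B}$ be the family of all sets obtained from a member of $\mathcal{U}$ by that same $(n-1)$-fold alternating star construction; by $2(n-1)$ applications of axiom~(2) of a large scale structure, $\mathcal{B}$ is uniformly bounded in $X$, and it depends only on $\mathcal{W}$, $\mathcal{U}$ and $n$, not on $V$ or on the chosen component $C$. The previous paragraph shows every member of $c(\mathcal{W}, f, \mathcal{V})$ lies in a member of $\mathcal{B}$, so $c(\mathcal{W}, f, \mathcal{V})\le\mathcal{B}$ is uniformly bounded. As $\mathcal{W}$ and $\mathcal{V}$ were arbitrary, $f$ is coarsely light.

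The step I expect to need the most care is the connectedness of $G$ — correctly packaging the elementary fact that a $\mathcal{W}$-connected set covered by $n$ pieces has a connected intersection graph relative to $\mathcal{W}$ — together with checking that exactly $n-1$ alternating star operations suffice, which is where the ``$n$'' of coarsely $n$-to-$1$ is consumed; everything else is routine manipulation with the large scale structure axioms. As a sanity check, the case $n=1$ gives $C\subseteq U_{j_0}$ directly, recovering the fact, already noted, that coarse embeddings are coarsely light.
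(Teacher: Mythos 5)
Your proof is correct and takes essentially the same approach as the paper: a $\mathcal{W}$-component covered by at most $n$ elements of $\mathcal{U}$ is trapped in a boundedly iterated star, which is uniformly bounded by repeated use of axiom (2). The paper compresses this by coarsening so that one cover serves both as the chaining cover and the $n$-to-$1$ cover and then quoting the $(n-1)$-fold star $\mathcal{U}^{n-1}$, whereas you keep $\mathcal{W}$ and $\mathcal{U}$ separate and spell out the intersection-graph/spanning-tree induction that the paper leaves implicit.
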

\begin{proof}
Let $\mathcal{U}$ and $\mathcal{V}$ be uniformly bounded covers of $X$ and $Y$ respectively. We may assume that $\mathcal{U}$ is large enough so that every element of $f^{-1}(\mathcal{V})$ is contained in a union of $n$ elements of $\mathcal{U}$. It follows that $c(\mathcal{U}, f, \mathcal{V})$ refines $\mathcal{U}^{n-1}$, the star of $\mathcal{U}$ with itself $n-1$ times, so it is uniformly bounded.
\end{proof}

\begin{Lemma}
If $f: X \rightarrow Y$ is coarsely light and is close to $g$, then $g$ is coarsely light. 
\end{Lemma}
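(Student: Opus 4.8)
The plan is to use the reformulation of coarse lightness isolated in the proof of Proposition~\ref{lightequiv}: an ls-continuous map $h : X \to Y$ is coarsely light precisely when $c(\mathcal{U}, h, \mathcal{V})$ is uniformly bounded in $X$ for every uniformly bounded family $\mathcal{U}$ in $X$ and $\mathcal{V}$ in $Y$. So I would fix such $\mathcal{U}$ and $\mathcal{V}$, pick a uniformly bounded cover $\mathcal{W}$ of $Y$ witnessing $f \sim g$, and show $c(\mathcal{U}, g, \mathcal{V})$ is uniformly bounded by comparing it with a suitable family for $f$.

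The central point is the set-level inclusion $g^{-1}(V) \subseteq f^{-1}(\st(V, \mathcal{W}))$ for every $V \subseteq Y$: if $g(x) \in V$, choose $W \in \mathcal{W}$ with $f(x), g(x) \in W$; then $W$ meets $V$, so $f(x) \in \st(V, \mathcal{W})$. Since a chain witnessing $\mathcal{U}$-connectedness inside $g^{-1}(V)$ is also such a chain inside the larger set $f^{-1}(\st(V, \mathcal{W}))$, each $\mathcal{U}$-component of $g^{-1}(V)$ lies inside a single $\mathcal{U}$-component of $f^{-1}(\st(V, \mathcal{W}))$; that is, $c(\mathcal{U}, g, \mathcal{V}) \leq c(\mathcal{U}, f, \st(\mathcal{V}, \mathcal{W}))$. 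As $\st(\mathcal{V}, \mathcal{W})$ is uniformly bounded in $Y$ and $f$ is coarsely light, the right-hand family is uniformly bounded in $X$, hence so is $c(\mathcal{U}, g, \mathcal{V})$; since $\mathcal{U}$ and $\mathcal{V}$ were arbitrary, $g$ is coarsely light.

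I would also remark at the outset that $g$ is automatically ls-continuous, so that the statement makes sense: for any uniformly bounded $\mathcal{U}$ in $X$ one has $g(\mathcal{U}) \leq \st(f(\mathcal{U}), \mathcal{W})$, which is uniformly bounded. There is no genuinely hard step here; the only subtlety is keeping track of the exact meaning of $\mathcal{U}$-component in the definition of $c(-,-,-)$, namely that the chains are confined to the relevant element of $f^{-1}(\mathcal{V})$ (resp. $g^{-1}(\mathcal{V})$). It is precisely this that makes "components of a subset refine components of a superset" valid and hence drives the inclusion $c(\mathcal{U}, g, \mathcal{V}) \leq c(\mathcal{U}, f, \st(\mathcal{V}, \mathcal{W}))$.
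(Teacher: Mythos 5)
Your proposal is correct and follows essentially the same route as the paper: the paper's proof rests on the single observation that $g^{-1}(\mathcal{V}) \leq f^{-1}(\mathcal{U})$ for a suitable uniformly bounded cover $\mathcal{U}$ of $Y$ (namely $\st(\mathcal{V},\mathcal{W})$ with $\mathcal{W}$ witnessing closeness), which is exactly your key inclusion. You simply spell out the resulting refinement $c(\mathcal{U}, g, \mathcal{V}) \leq c(\mathcal{U}, f, \st(\mathcal{V}, \mathcal{W}))$ and the ls-continuity of $g$, details the paper leaves implicit.
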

\begin{proof}
This follows from the fact that for any uniformly bounded cover $\mathcal{V}$ of $Y$, there is a uniformly bounded cover $\mathcal{U}$ of $Y$ such that $g^{-1}(\mathcal{V}) \leq f^{-1}(\mathcal{U})$.
\end{proof}

By the lemma above, it makes sense to speak of the class of coarsely light maps in the coarse category: a morphism in the coarse category is coarsely light if and only if one (and hence all) of its representatives are coarsely light. The following lemma shows that coarsely light maps form a subcategory of the coarse category, i.e.~that they are closed under composition.

\begin{Lemma}\label{comp}
If $f: X \rightarrow Y$ and $g: Y \rightarrow Z$ are coarsely light maps, then $gf$ is a coarsely light map.
\end{Lemma}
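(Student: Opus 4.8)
The plan is to work directly from the definition of coarse lightness. Since the light structure on a space with respect to any ls-continuous map always contains the original ls-structure, and the ls-structure is closed under refinement (condition (1) in the definition of a large scale structure), it suffices to show that for every uniformly bounded family $\mathcal{U}$ in $X$ and every uniformly bounded family $\mathcal{W}$ in $Z$, the family $c(\mathcal{U}, gf, \mathcal{W})$ of $\mathcal{U}$-components of elements of $(gf)^{-1}(\mathcal{W}) = f^{-1}(g^{-1}(\mathcal{W}))$ is uniformly bounded in $X$. As usual, we may assume $\mathcal{U}$ and $\mathcal{W}$ are covers.

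The obstacle is that $g^{-1}(\mathcal{W})$ need not be uniformly bounded in $Y$ (as $g$ need not be a coarse embedding), so one cannot feed it directly into the coarse lightness of $f$. The remedy is to pass through an intermediate uniformly bounded family of $Y$. Push $\mathcal{U}$ forward to the uniformly bounded cover $f(\mathcal{U})$ of $Y$; since $g$ is coarsely light, the family $\mathcal{V} := c(f(\mathcal{U}), g, \mathcal{W})$ of $f(\mathcal{U})$-components of elements of $g^{-1}(\mathcal{W})$ is uniformly bounded in $Y$.

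The key step is the containment $c(\mathcal{U}, gf, \mathcal{W}) \leq c(\mathcal{U}, f, \mathcal{V})$. To prove it, fix $W \in \mathcal{W}$ and a $\mathcal{U}$-component $C$ of $(gf)^{-1}(W)$. For $x, x' \in C$, any $\mathcal{U}$-chain $x = x_1, x_2, \ldots, x_k = x'$ inside $(gf)^{-1}(W)$ is carried by $f$ to a sequence $f(x_1), \ldots, f(x_k)$ in which consecutive terms lie in a common member of $f(\mathcal{U})$, and which lies entirely in $g^{-1}(W)$ since $g(f(x_i)) \in W$ for each $i$. Hence $f(C)$ is contained in a single $f(\mathcal{U})$-component $V \in \mathcal{V}$ of $g^{-1}(W)$, so $C \subseteq f^{-1}(V)$; being $\mathcal{U}$-connected, $C$ lies in a $\mathcal{U}$-component of $f^{-1}(V)$, which is an element of $c(\mathcal{U}, f, \mathcal{V})$. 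This establishes the containment.

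Finally, because $f$ is coarsely light and $\mathcal{V}$ is uniformly bounded in $Y$, the family $c(\mathcal{U}, f, \mathcal{V})$ is uniformly bounded in $X$; by the containment together with condition (1) in the definition of a large scale structure, $c(\mathcal{U}, gf, \mathcal{W})$ is uniformly bounded as well, so $gf$ is coarsely light. The main point requiring care is precisely this intermediate step: one must ``coarsen via $g$ and then pull back via $f$'' rather than attempt a single-step argument, because $g^{-1}(\mathcal{W})$ itself carries no uniform boundedness information.
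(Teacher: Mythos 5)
Your proof is correct and follows essentially the same route as the paper: pass to the intermediate uniformly bounded family $c(f(\mathcal{U}), g, \mathcal{W})$ in $Y$ using coarse lightness of $g$, then observe that $c(\mathcal{U}, gf, \mathcal{W})$ refines $c(\mathcal{U}, f, c(f(\mathcal{U}), g, \mathcal{W}))$, which is uniformly bounded by coarse lightness of $f$. The only difference is that you spell out the chain argument for the refinement, which the paper states without proof.
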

\begin{proof}
Let $\mathcal{V}$ be a uniformly bounded cover of $Z$ and $\mathcal{U}$ a uniformly bounded cover of $X$. Since $f$ and $g$ are coarsely light, $c(f(\mathcal{U}), g, \mathcal{V})$ is uniformly bounded, as is $c(\mathcal{U}, f, c(f(\mathcal{U}), g, \mathcal{V}))$. Since $c(\mathcal{U}, gf, \mathcal{V})$ is refinement of this cover, it is uniformly bounded.
\end{proof}

Given any ls-continuous map $f: X \rightarrow Y$ of coarse spaces, let $X_f$ denote $X$ with the light structure with respect to $f$. Then $f$ factorises as 
\begin{equation} \label{fact1}
\begin{gathered}
\xymatrix{
X \ar[rd]_e \ar[rr]^f & & Y \\
& X_f \ar[ru]_{f'} \\
}
\end{gathered}
\end{equation}
where $e$ is the identity set map. One checks that $f'$ is ls-continuous and coarsely light; we will call $f'$ the \textbf{light-part of $f$}. This factorization satisfies a universal property, as the following lemma shows.

\begin{Lemma}\label{rightM}
Let $f = f'e$ be the factorization as above. Given any diagram of solid arrows below consisting of ls-continuous maps which commutes up to closeness and in which $n$ is a coarsely light map, there is a unique-up-to-closeness ls-continuous map $g$ making the diagram commute up to closeness.
\begin{equation} \label{rightfact}
\begin{gathered}
\xymatrix@=40pt{
X \ar@/_2pc/[dd]_f \ar[d]^e \ar[rd]^{e'} \\
X_f \ar[d]^{f'} \ar@{..>}[r]_g& W \ar[d]^{n} \\
Y \ar[r]_h & Z\\
}
\end{gathered}
\end{equation}
\end{Lemma}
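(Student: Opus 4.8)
The plan is to exploit the fact that $e\colon X\to X_f$ is the identity on underlying sets; this will immediately pin down the fill-in $g$ and collapse the whole statement down to a single ls-continuity check.

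First I would note that any ls-continuous $g\colon X_f\to W$ with $ge\sim e'$ has underlying set map close to $e'$ (because $ge=g$ as set maps when $e$ is an identity set map), which gives the uniqueness clause at once. For existence I would simply take $g$ to be $e'$ itself, now regarded as a set map out of $X_f$. Then $ge=e'$ on the nose, and the lower triangle $ng\sim hf'$ holds automatically: since $f'e=f$ and $e$ is the identity, $f'$ and $f$ have the same underlying set map, so $ng=ne'$ and $hf'=hf$, and $ne'\sim hf$ is precisely the hypothesis that the outer square commutes up to closeness. Hence the entire content of the lemma is the claim that $e'\colon X_f\to W$ is ls-continuous, i.e.\ that $e'$ carries every uniformly bounded family of the light structure on $X$ with respect to $f$ to a uniformly bounded family of $W$.

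To prove that, it suffices to treat a generating family $c(\mathcal{U},f,\mathcal{V})$ with $\mathcal{U}$, $\mathcal{V}$ uniformly bounded in $X$, $Y$. The idea is to push $\mathcal{U}$ forward through $e'$ and extend it to a uniformly bounded cover $\mathcal{U}'$ of $W$, push $\mathcal{V}$ forward through $h$ to $Z$, choose a uniformly bounded family $\mathcal{W}$ in $Z$ witnessing $hf\sim ne'$, and then establish
$$ e'\bigl(c(\mathcal{U},f,\mathcal{V})\bigr)\ \leq\ c\bigl(\mathcal{U}',\,n,\,\st(h(\mathcal{V}),\mathcal{W})\bigr), $$
which is uniformly bounded in $W$ because $n$ is coarsely light and $\st(h(\mathcal{V}),\mathcal{W})$ is uniformly bounded in $Z$. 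Concretely, for a $\mathcal{U}$-component $C$ of some $f^{-1}(V)$ I would verify two things: (i) $e'(C)\subseteq n^{-1}(\st(h(V),\mathcal{W}))$, using that $hf(x)\in h(V)$ while $hf(x)$ and $ne'(x)$ share a member of $\mathcal{W}$; and (ii) $e'(C)$ is $\mathcal{U}'$-connected inside that set, obtained by applying $e'$ term-by-term to a $\mathcal{U}$-chain joining two points of $C$ and using $e'(\mathcal{U})\leq\mathcal{U}'$. Together (i) and (ii) say $e'(C)$ lies in a single $\mathcal{U}'$-component of $n^{-1}(\st(h(V),\mathcal{W}))$, which is exactly the refinement claimed.

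The main obstacle is step (ii), the transfer of connectivity: one must make sure the chain realizing the $\mathcal{U}$-connectivity of $C$ can be taken inside $C$ itself (so that its $e'$-image stays inside the fibre-type set $n^{-1}(\st(h(V),\mathcal{W}))$ used to invoke coarse lightness of $n$), and keep the three ambient ls-structures on $X$, $W$, $Z$ together with the closeness witness $\mathcal{W}$ straight throughout the bookkeeping with the ls-axioms. Everything else — uniqueness, the lower triangle, and the reduction to generators of the light structure — is formal, essentially because $e$ is an identity on underlying sets.
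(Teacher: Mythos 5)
Your proposal is correct and follows essentially the same route as the paper: take $g=e'$ on underlying sets (uniqueness being immediate since $e$ is the identity set map), and reduce everything to showing $e'\bigl(c(\mathcal{U},f,\mathcal{V})\bigr)\leq c\bigl(e'(\mathcal{U}),n,\st(h(\mathcal{V}),\mathcal{T})\bigr)$ with $\mathcal{T}$ the closeness witness for $hf\sim ne'$, then invoke coarse lightness of $n$. The "obstacle" you flag in step (ii) is automatic: since a $\mathcal{U}$-component $C$ of $f^{-1}(V)$ is an equivalence class, any $\mathcal{U}$-chain in $f^{-1}(V)$ joining points of $C$ already lies in $C$, so its $e'$-image stays in $n^{-1}(\st(h(V),\mathcal{T}))$ as required.
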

\begin{proof}
Since $e$ is the identity as a set map, the map $g$, if it exists, is clearly unique-up-to-closeness. Define a $g: X_f \rightarrow W$ to be the same as $e'$ at the level of underlying sets. It that remains to show that $g$ so defined is ls-continuous. Consider a uniformly bounded family in $X_f$, which we may suppose to be of the form $c(\mathcal{U}, f,  \mathcal{V})$. If $x$ and $x'$ are in an element $U$ of $\mathcal{U}$, then $g(x)$ and $g(x')$ are in the subset $g(U) = e'(U) \in e'(\mathcal{U})$. Moreover, if $f'(x) = f(x)$ and $f'(x') = f'(x')$ are both in an element $V \in \mathcal{V}$, then $hf'(x)$ and $hf'(x')$ are both in $h(V)\in h(\mathcal{V})$. Let $\mathcal{T}$ be the uniformly bounded cover which witnesses the closeness of $hf$ and $ne'$. Then under the assumptions on $x$ and $x'$, $ng(x)$ and $ng(x')$ are in some $V' \in \mathcal{V}' = \st(h(\mathcal{V}), \mathcal{T})$. In other words, $g(c(\mathcal{U}, f, \mathcal{V})) \leq c(e'(\mathcal{U}), n, \mathcal{V}')$. Since $n$ is coarsely light, this second cover is uniformly bounded, so $g$ is ls-continuous as required.
\end{proof}

\section{Coarsely monotone maps and monotone-light factorizations}
We are now ready to define coarsely monotone maps. Let $f: X \rightarrow Y$ be an ls-continuous map and consider its factorization $f = f'e$ as in (\ref{fact1}). We say that $f$ is \textbf{coarsely monotone} if $f'$ (i.e.~the light-part of $f$) is a coarse equivalence. Since coarse equivalences are always light, it is easy to see that any coarse equivalence is also monotone. The following lemma is easy to show.

\begin{Lemma}\label{mono}
An ls-continuous map $f: X \rightarrow Y$ is coarsely monotone if and only if it is coarsely surjective and for every uniformly bounded cover $\mathcal{V}$ of $Y$, there is a uniformly bounded family $\mathcal{U}$ in $X$ and a family $\mathcal{T}$ of $\mathcal{U}$-connected subsets of $X$ which coarsens $f^{-1}(\mathcal{V})$ such that $f(\mathcal{T})$ is a uniformly bounded family in $Y$.
\end{Lemma}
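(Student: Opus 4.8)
The plan is to unwind the definitions. Recall that $f$ is coarsely monotone exactly when its light-part $f' \colon X_f \to Y$ is a coarse equivalence, which — since $f'$ is ls-continuous — is the same as $f'$ being coarsely surjective and a coarse embedding. As $f'$ agrees with $f$ as a set map and the target ls-structure on $Y$ is unchanged, $f'$ is coarsely surjective if and only if $f$ is. So the lemma reduces to showing that $f'$ is a coarse embedding — i.e.\ that $f^{-1}(\mathcal{V})$ is uniformly bounded in $X_f$ for every uniformly bounded family $\mathcal{V}$ in $Y$ — if and only if the cover condition in the statement holds. Throughout I will use that a family is uniformly bounded in $X_f$ precisely when it refines some $c(\mathcal{U}, f, \mathcal{V}')$ with $\mathcal{U}$ uniformly bounded in $X$ and $\mathcal{V}'$ uniformly bounded in $Y$, and that a $\mathcal{U}$-component of a set is a $\mathcal{U}$-connected subset of $X$.

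For the forward implication, suppose $f$ is coarsely monotone and let $\mathcal{V}$ be a uniformly bounded cover of $Y$. Then $f^{-1}(\mathcal{V}) \leq c(\mathcal{U}, f, \mathcal{V}')$ for suitable uniformly bounded $\mathcal{U}$ in $X$ and $\mathcal{V}'$ in $Y$. For each $V \in \mathcal{V}$ with $f^{-1}(V) \neq \emptyset$ pick an element $C_V$ of $c(\mathcal{U}, f, \mathcal{V}')$ containing $f^{-1}(V)$; by definition $C_V$ is a $\mathcal{U}$-component of $f^{-1}(V'_V)$ for some $V'_V \in \mathcal{V}'$, so $C_V$ is $\mathcal{U}$-connected and $f(C_V) \subseteq V'_V$. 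Taking $\mathcal{T}$ to be the family of these $C_V$, one checks that $\mathcal{T}$ coarsens $f^{-1}(\mathcal{V})$, that each member of $\mathcal{T}$ is $\mathcal{U}$-connected, and that $f(\mathcal{T})$ refines $\mathcal{V}'$, hence is uniformly bounded. Coarse surjectivity of $f$ is given (it is inherited from $f'$).

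For the converse, suppose $f$ is coarsely surjective and the cover condition holds; it suffices to check that $f'$ is a coarse embedding. Let $\mathcal{V}$ be a uniformly bounded family in $Y$; replacing $\mathcal{V}$ by its trivial extension we may take it to be a cover. Applying the hypothesis, we get a uniformly bounded family $\mathcal{U}$ in $X$ and a family $\mathcal{T}$ of $\mathcal{U}$-connected subsets of $X$ coarsening $f^{-1}(\mathcal{V})$ with $\mathcal{V}' := f(\mathcal{T})$ uniformly bounded in $Y$. For $T \in \mathcal{T}$ we have $T \subseteq f^{-1}(f(T))$ and $f(T) \in \mathcal{V}'$, and since $T$ is $\mathcal{U}$-connected it lies inside a single $\mathcal{U}$-component of $f^{-1}(f(T))$, an element of $c(\mathcal{U}, f, \mathcal{V}')$. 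Hence $\mathcal{T} \leq c(\mathcal{U}, f, \mathcal{V}')$, so $f^{-1}(\mathcal{V}) \leq \mathcal{T}$ is uniformly bounded in $X_f$. This makes $f'$ a coarse embedding, and combined with coarse surjectivity, a coarse equivalence.

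I do not expect a genuine obstacle here — the statement is flagged as easy — but two bookkeeping points need care. The first is organising the reduction cleanly: the characterisation of coarse equivalences as coarsely surjective coarse embeddings, the description of uniformly bounded families in $X_f$, and the harmless passage to trivial extensions so one may always speak of covers. The second, and the only genuinely load-bearing point, is the reading of ``$\mathcal{U}$-connected subset'': in the converse direction one needs the connecting $\mathcal{U}$-chains for $T$ to be available inside $f^{-1}(f(T))$ so that $\mathcal{U}$-connectedness of $T$ forces it into one $\mathcal{U}$-component there. This is precisely the convention under which a $\mathcal{U}$-component is a $\mathcal{U}$-connected set, so it matches the sets produced in the forward direction, which are literal $\mathcal{U}$-components.
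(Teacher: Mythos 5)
Your proposal is correct: the paper offers no written proof (the lemma is simply declared ``easy to show''), and your argument --- reducing coarse monotonicity to coarse surjectivity plus the light-part $f'\colon X_f\to Y$ being a coarse embedding, then translating ``$f^{-1}(\mathcal{V})$ refines some $c(\mathcal{U},f,\mathcal{V}')$'' back and forth into the $\mathcal{T}$-condition --- is exactly the routine unwinding the authors intend. Your flagged convention is also the right one: ``$\mathcal{U}$-connected subset'' must mean connected by chains lying inside the subset (as for $\mathcal{U}$-components), since with chains allowed to wander through $X$ the converse direction of the lemma would actually be false.
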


The following lemma shows that it makes sense to speak of coarsely monotone maps in the coarse category.

\begin{Lemma}
If $f \sim g$ and $f$ is coarsely monotone, then $g$ is coarsely monotone.
\end{Lemma}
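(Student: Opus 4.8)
The plan is to reduce everything to the characterization of coarsely monotone maps given in Lemma~\ref{mono} and to transport the required data from $f$ to $g$ by absorbing the closeness witness into the covers of $Y$. Throughout, let $\mathcal{W}$ be a uniformly bounded family witnessing $f \sim g$; passing to its trivial extension we may assume $\mathcal{W}$ is a cover of $Y$, so that in particular every point of $Y$ lies in a member of $\mathcal{W}$. First I would dispose of coarse surjectivity of $g$: if $Y \subseteq \st(f(X), \mathcal{U})$ for some uniformly bounded $\mathcal{U}$, then $f(X) \subseteq \st(g(X), \mathcal{W})$ (each $f(x)$ lies in a member of $\mathcal{W}$ that meets $g(X)$ at $g(x)$), whence $Y \subseteq \st(\st(g(X), \mathcal{W}), \mathcal{U}) \subseteq \st(g(X), \st(\mathcal{U}, \mathcal{W}))$, and $\st(\mathcal{U}, \mathcal{W})$ is uniformly bounded by ls-axiom~(2); so $g$ is coarsely surjective.

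For the main condition, fix a uniformly bounded cover $\mathcal{V}$ of $Y$. The key observation is the inclusion $g^{-1}(V) \subseteq f^{-1}(\st(V, \mathcal{W}))$ for each $V \in \mathcal{V}$: if $g(x) \in V$, then $f(x)$ and $g(x)$ lie in a common $W \in \mathcal{W}$, and since $W$ meets $V$ at $g(x)$ we get $f(x) \in W \subseteq \st(V, \mathcal{W})$. Thus $g^{-1}(\mathcal{V})$ refines $f^{-1}(\st(\mathcal{V}, \mathcal{W}))$. Now $\st(\mathcal{V}, \mathcal{W})$ is a uniformly bounded cover of $Y$, so Lemma~\ref{mono} applied to the coarsely monotone map $f$ yields a uniformly bounded family $\mathcal{U}$ in $X$ together with a family $\mathcal{T}$ of $\mathcal{U}$-connected subsets of $X$ which coarsens $f^{-1}(\st(\mathcal{V}, \mathcal{W}))$ and with $f(\mathcal{T})$ uniformly bounded in $Y$.

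It then remains to check that the same pair $(\mathcal{U}, \mathcal{T})$ works for $g$ against $\mathcal{V}$. Refinement is transitive, so from $g^{-1}(\mathcal{V}) \leq f^{-1}(\st(\mathcal{V}, \mathcal{W})) \leq \mathcal{T}$ we conclude that $\mathcal{T}$ coarsens $g^{-1}(\mathcal{V})$, and its members are $\mathcal{U}$-connected by construction. Finally, for $T \in \mathcal{T}$ and $x \in T$ we have $g(x) \in \st(f(x), \mathcal{W}) \subseteq \st(f(T), \mathcal{W})$, so $g(\mathcal{T})$ refines $\st(f(\mathcal{T}), \mathcal{W})$, which is uniformly bounded by ls-axiom~(2) (and hence so is $g(\mathcal{T})$, by ls-axiom~(1)). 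By Lemma~\ref{mono}, $g$ is coarsely monotone. The argument is entirely routine manipulation of stars and refinements; the only points requiring any care are the inclusion $g^{-1}(V) \subseteq f^{-1}(\st(V, \mathcal{W}))$ and the preliminary replacement of the closeness witness by a genuine cover, so that the points where $f$ and $g$ agree are not left out. I do not anticipate a real obstacle here.
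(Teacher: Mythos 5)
Your proof is correct, but it takes a different route from the paper's. The paper argues structurally: since $f \sim g$, the light structures on $X$ induced by $f$ and by $g$ coincide (for any uniformly bounded $\mathcal{V}$ in $Y$ one has $g^{-1}(\mathcal{V}) \leq f^{-1}(\st(\mathcal{V},\mathcal{W}))$ and symmetrically), so the light-parts $f', g' \colon X_f = X_g \rightarrow Y$ are close maps; as $f'$ is a coarse equivalence and coarse equivalence is invariant under closeness, $g'$ is one too, which is exactly the definition of $g$ being coarsely monotone. You instead verify the combinatorial characterization of Lemma~\ref{mono} directly, transporting the data $(\mathcal{U},\mathcal{T})$ obtained for $f$ against the enlarged cover $\st(\mathcal{V},\mathcal{W})$ back to $g$, and checking coarse surjectivity of $g$ by hand via $f(X) \subseteq \st(g(X),\mathcal{W})$. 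All the star/refinement manipulations you carry out (including the key inclusion $g^{-1}(V) \subseteq f^{-1}(\st(V,\mathcal{W}))$ and the trivial extension of the closeness witness to a cover so that $U \subseteq \st(U,\mathcal{W})$) are sound, and your appeal to Lemma~\ref{mono} is legitimate since it precedes this lemma in the paper (though note the paper leaves that lemma unproved, so your argument is only as self-contained as that characterization). What each approach buys: the paper's two-line proof stays at the level of the light-structure definition and reuses the closeness-invariance of coarsely light maps and of coarse equivalences, while yours is more elementary and explicit, and as a by-product gives a direct quantitative proof that coarse surjectivity is closeness-invariant.
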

\begin{proof}
The light structures induced by $g$ and $f$ are the same, so the light-parts of $g$ and $f$ are close. The result follows.
\end{proof}

Let $\mathbb{C}$ be a general category and let $(\mathcal{E}, \mathcal{M})$ be a pair of classes of morphisms in $\mathbb{C}$. Recall that the pair $(\mathcal{E}, \mathcal{M})$ is said to constitute a factorization system~\cite{FreKel} if the following conditions are satisfied:
\begin{itemize}
\item[(1)] each of $\mathcal{E}$ and $\mathcal{M}$ contains the isomorphisms and is closed under composition;
\item[(2)] every morphism $f$ in $\mathbb{C}$ can be written as $f = me$ with $m \in \mathcal{M}$ and $e \in \mathcal{E}$;
\item[(3)] given any commutative diagram of solid arrows below, with $e, e' \in \mathcal{E}$, $m, m' \in \mathcal{M}$, there is a unique morphism $h$ making the diagram commute:
$$ \xymatrix{
\bullet \ar[r]^e \ar[d]_u & \bullet \ar@{..>}[d]^h \ar[r]^m & \bullet \ar[d]^v \\
\bullet \ar[r]_{e'} & \bullet \ar[r]_{m'} & \bullet \\
}
$$
\end{itemize}

Note that in (3), if $u$ and $v$ are isomorphisms, then so is $h$. Let $\mathsf{CMon}$ be the class of (equivalence classes of) coarsely monotone maps and $\mathsf{CLight}$ be the class of (equivalence classes of) coarsely light maps in the coarse category. 

\begin{Theorem} \label{factsys}
The pair $(\mathsf{CMon}, \mathsf{CLight})$ constitutes a factorization system on the coarse category. In particular, every ls-continuous map $f$ factorizes as $f = f'e$ where $f'$ is coarsely light and $e$ is coarsely monotone.
\end{Theorem}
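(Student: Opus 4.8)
The plan is to verify, for the pair $(\mathsf{CMon},\mathsf{CLight})$, the three conditions in the definition of a factorization system, leaning on the lemmas already proved; most of the work is done, so what remains is assembly together with one genuinely new verification. First I would handle the isomorphism and factorization parts: $\mathsf{CLight}$ contains every coarse equivalence (a coarse equivalence is a coarse embedding, hence coarsely light by the Example) and is closed under composition by Lemma~\ref{comp}; a coarse equivalence $\varphi$ has light structure equal to the ls-structure of its domain, so its light-part is $\varphi$ itself, whence $\varphi$ is coarsely monotone and $\mathsf{CMon}$ contains the isomorphisms. For condition~(2) I would use the factorization $f=f'e$ of~(\ref{fact1}): $f'$ is coarsely light by construction, so the only issue is that the identity set map $e\colon X\to X_f$ is coarsely monotone.

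To see that $e\colon X\to X_f$ is coarsely monotone, I would show that the light structure on $X$ with respect to $e$ coincides with the light structure of $f$, i.e.\ with the ls-structure of $X_f$; since $e$ is a bijective set map, this coincidence says exactly that the light-part of $e$ is a coarse equivalence, and $e$ is also surjective, so $e$ is coarsely monotone and condition~(2) follows. For one inclusion: if $\mathcal W$ is uniformly bounded in $X_f$, then $\mathcal W\le c(\mathcal U,f,\mathcal V)$ for some $\mathcal U,\mathcal V$, and a $\mathcal U'$-component of a member of $\mathcal W$ --- being a $\mathcal U'$-connected subset of a $\mathcal U$-component of some $f^{-1}(V)$ --- lies in a $\st(\mathcal U,\mathcal U')$-component of $f^{-1}(V)$, so $c(\mathcal U',e,\mathcal W)\le c(\st(\mathcal U,\mathcal U'),f,\mathcal V)$, which is uniformly bounded in $X_f$. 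For the other inclusion: the members of $c(\mathcal U,f,\mathcal V)$ are already $\mathcal U$-connected, so $c(\mathcal U,e,c(\mathcal U,f,\mathcal V))=c(\mathcal U,f,\mathcal V)$.

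Next I would establish the orthogonality behind condition~(3): every coarsely monotone map is orthogonal (in the up-to-closeness sense of the coarse category) to every coarsely light map. Lemma~\ref{rightM} gives exactly this for the canonical maps $X\to X_f$. For a general coarsely monotone $p\colon A\to B$, factor $p=p'\iota$ with $\iota\colon A\to A_p$ the canonical map and $p'$ its light-part, a coarse equivalence by the definition of coarsely monotone. By Lemma~\ref{rightM}, $\iota$ is orthogonal to every coarsely light map; the isomorphism $p'$ is orthogonal to everything; and the class ${}^{\perp}\mathsf{CLight}$ of morphisms orthogonal to all coarsely light maps is closed under composition, so $p\in{}^{\perp}\mathsf{CLight}$. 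Condition~(3) then follows by the usual pasting argument: in the two-square diagram with $e,e'$ coarsely monotone and $m,m'$ coarsely light, apply the orthogonality of $e$ against $m'$ to the square with legs $e'u$ and $vm$ to obtain the required unique $h$.

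Finally I would close $\mathsf{CMon}$ under composition by showing $\mathsf{CMon}={}^{\perp}\mathsf{CLight}$: the inclusion $\subseteq$ was just proved, and conversely, if $p\in{}^{\perp}\mathsf{CLight}$ factors as $p=me$ with $m$ coarsely light and $e$ coarsely monotone, then applying the orthogonality of $p$ and then of $e$ against $m$, together with uniqueness of diagonals, produces a two-sided inverse of $m$, so $m$ is a coarse equivalence and hence $p$ is coarsely monotone (using as a side check --- from Lemma~\ref{mono}, or from the fact that the light structure of $\varphi f$ agrees with that of $f$ when $\varphi$ is a coarse equivalence --- that $\mathsf{CMon}$ is closed under composing with coarse equivalences). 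Since ${}^{\perp}\mathsf{CLight}$ always contains the isomorphisms and is closed under composition, so is $\mathsf{CMon}$, completing condition~(1). The step I expect to be the main obstacle is the verification in the second paragraph that $e\colon X\to X_f$ is coarsely monotone --- in effect an idempotency statement for the light-structure construction --- whose crux is that a $\mathcal U'$-component of a $\mathcal U$-component of $f^{-1}(V)$ sits inside a $\st(\mathcal U,\mathcal U')$-component of $f^{-1}(V)$; everything else is either cited (Lemmas~\ref{comp}, \ref{rightM}, \ref{mono}) or formal manipulation of orthogonal classes, with Lemma~\ref{rightM} carrying the real weight of condition~(3).
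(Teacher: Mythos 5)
Your proof is correct, and it rests on the same pillars as the paper's: the canonical factorization (\ref{fact1}) for condition (2), Lemma~\ref{comp} for closure of $\mathsf{CLight}$ under composition, and Lemma~\ref{rightM} as the engine behind condition (3). Where you differ is in the organization of conditions (1) and (3). The paper asserts that the canonical map $e\colon X\to X_f$ is ``clearly'' coarsely monotone and that (3) is an ``easy consequence'' of Lemma~\ref{rightM}, and then closes $\mathsf{CMon}$ under composition by a direct double application of (3) to a factorization $e'e=me''$, using the uniqueness clause to produce a two-sided inverse of $m$. You instead (i) actually prove the idempotency statement hiding behind ``clearly'' (the light structure with respect to $e$ coincides with that of $f$), and (ii) identify $\mathsf{CMon}$ with the orthogonality class ${}^{\perp}\mathsf{CLight}$, from which closure under composition and the diagonal fill-in of (3) follow formally; this is in effect the categorical argument the paper only gestures at in the Remark after the theorem (a right $\mathsf{CLight}$-factorization system plus closure properties determines the factorization system). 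Your route makes the two steps the paper leaves vague fully explicit, at the cost of some orthogonality bookkeeping, and your ${}^{\perp}\mathsf{CLight}\subseteq\mathsf{CMon}$ argument is essentially the same inverse-producing trick the paper uses for composition. One small caution: Lemma~\ref{rightM} as stated only supplies diagonals for squares whose bottom edge factors as $hf'$ through the light part, so it does not literally assert that the canonical map is orthogonal to \emph{every} square against a coarsely light map; this is harmless here, since in each place you invoke it the bottom edge has exactly that form (and when the ambient map is coarsely monotone its light part is a coarse equivalence, so the restriction is vacuous), but it is worth stating when you promote the lemma to full orthogonality.
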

\begin{proof}
Let $f$ be an ls-continuous map, and let $f = f'e$ be the factorization of $f$ as in (\ref{fact1}). Clearly $e$ is coarsely monotone, so this proves (2) in the definition of a factorization system. Condition (3) is an easy consequence of Lemma~\ref{rightM}. Thus, after Lemma~\ref{comp}, all that remains to be shown is that coarsely monotone maps are closed under composition. This in fact follows from (2), (3) and Lemma~\ref{comp}. Let $e: X \rightarrow Y$ and $e': Y \rightarrow Z$ be coarsely monotone maps, thought of as morphisms in the coarse category, and factorize $e'e$ as $e'e = me''$ with $m$ coarsely light and $e''$ coarsely monotone. Then by (3) we have the morphism $h$ in the following commutative diagram:
$$ \xymatrix{
\bullet \ar[r]^e \ar@{=}[d] & \bullet \ar@{..>}[d]^h \ar@{=}[r] & \bullet \ar[d]^{e'} \\
\bullet \ar[r]_{e''} & \bullet \ar[r]_{m} & \bullet \\
}
$$
We also have the morphism $j$ in the diagram
$$ \xymatrix{
\bullet \ar[r]^{e'} \ar@{=}[d] & \bullet \ar@{..>}[d]^j \ar@{=}[r] & \bullet \ar@{=}[d] \\
\bullet \ar[r]_{h'} & \bullet \ar[r]_{mi} & \bullet \\
}
$$
where $h = ih'$ is the $(\mathsf{CMon}, \mathsf{CLight})$-factorization of $h$. Thus $ij$ is a right inverse (in the coarse category) to $m$. To show that it is a two-sided inverse, we apply the uniqueness part of (3) to the commutative diagram
$$ \xymatrix{
\bullet \ar[r]^{e''} \ar@{=}[d] & \bullet \ar@{..>}[d]^{ijm} \ar[r]^m & \bullet \ar@{=}[d] \\
\bullet \ar[r]_{e''} & \bullet \ar[r]_{m} & \bullet \\
}
$$
\end{proof}

\begin{Remark}
In the language of category theory, Lemma~\ref{rightM} states that the coarse category admits a \textbf{right $\mathsf{CLight}$-factorization system} in the sense of~\cite{DikTho} (see also~\cite{EhrWyl}). It is well known that for a category $\mathbb{C}$ and a class $\mathcal{M}$ of morphisms in $\mathbb{C}$, if $\mathcal{M}$ contains the isomorphisms and is closed under composition and $\mathbb{C}$ admits a right $\mathcal{M}$-factorization system, then $\mathcal{M}$ is part of a unique factorization system $(\mathcal{E}, \mathcal{M})$ on the category. Thus the above theorem can be proved using categorical arguments once we have Lemmas~\ref{comp} and \ref{rightM}.
\end{Remark}

We will call a factorization $f \sim f'e$ with $f'$ coarsely light and $e$ coarsely monotone a \textbf{coarse monotone-light factorization of $f$}. Note that, by condition (3) in the definition of a factorization system, such a factorization is unique up to a coarse equivalence which makes the obvious diagram commute up to closeness. A large number of useful properties of coarsely monotone and light maps follow from the above theorem and general facts about factorization systems. For example, if $fg$ and $f$ are coarsely light, then so is $g$; dually, if $fg$ and $g$ are coarsely monotone, then so is $f$. We conclude this section with the following easy observations, which show how coarsely light and coarsely monotone maps can be used to characterise certain types of ls-spaces.

\begin{Proposition} \label{mapsto1}
An ls-space $X$ has asymptotic dimension $0$ if and only if every ls-continuous map $f: X \rightarrow Y$ is coarsely light. An ls-space $X$ is $\mathcal{U}$-connected for some uniformly bounded cover $\mathcal{U}$ if and only if every ls-continuous map $f: X \rightarrow B$ to a bounded space $B$ is coarsely monotone.
\end{Proposition}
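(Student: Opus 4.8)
The plan is to handle the two biconditionals separately, and in each ``every map'' direction to reduce to the single test map $c\colon X \to \{*\}$ into a one-point (hence bounded) space, invoking the characterizations of coarse lightness and coarse monotonicity recorded earlier.

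For the first biconditional, the forward direction is exactly the first item of the Example above: if $\asdim X = 0$, then for any uniformly bounded family $\mathcal{U}$ in $X$ the $\mathcal{U}$-components of $X$ form a uniformly bounded family, so for any ls-continuous $f\colon X \to Y$ and uniformly bounded $\mathcal{V}$ in $Y$ the family $c(\mathcal{U}, f, \mathcal{V})$ refines this family of components and hence is uniformly bounded; thus $f$ is coarsely light. For the converse I would apply the hypothesis to $c\colon X \to \{*\}$. With $\mathcal{V} = \{\{*\}\}$ one has $c^{-1}(\mathcal{V}) = \{X\}$, so $c(\mathcal{U}, c, \mathcal{V})$ is precisely the family of $\mathcal{U}$-components of $X$; coarse lightness of $c$ says this is uniformly bounded for every uniformly bounded $\mathcal{U}$, which is exactly the stated criterion for $\asdim X = 0$.

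For the second biconditional I would use Lemma~\ref{mono}. Assume first that $X$ is $\mathcal{U}_0$-connected for some uniformly bounded cover $\mathcal{U}_0$, and let $f\colon X \to B$ be ls-continuous with $B$ bounded. Then $f$ is coarsely surjective, since $\{B\}$ is uniformly bounded and $\st(f(X), \{B\}) = B$ (here $f(X)\neq\emptyset$ as $X\neq\emptyset$). For any uniformly bounded cover $\mathcal{V}$ of $B$, take $\mathcal{U} = \mathcal{U}_0$ and $\mathcal{T} = \{X\}$: then $\{X\}$ is a family of $\mathcal{U}_0$-connected subsets of $X$ coarsening $f^{-1}(\mathcal{V})$, and $f(\mathcal{T}) = \{f(X)\}$ is uniformly bounded because $B$ is bounded; Lemma~\ref{mono} then gives that $f$ is coarsely monotone. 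Conversely, assume every ls-continuous map into a bounded space is coarsely monotone, and apply this to $c\colon X \to \{*\}$. By Lemma~\ref{mono}, for $\mathcal{V} = \{\{*\}\}$ there is a uniformly bounded family $\mathcal{U}$ in $X$ and a family $\mathcal{T}$ of $\mathcal{U}$-connected subsets coarsening $c^{-1}(\mathcal{V}) = \{X\}$; the coarsening condition forces some $T \in \mathcal{T}$ with $X \subseteq T \subseteq X$, so $T = X$ is $\mathcal{U}$-connected. Passing to the trivial extension of $\mathcal{U}$ (adding only singletons, which creates no new $\mathcal{U}$-relations and so cannot destroy $\mathcal{U}$-connectedness) yields a uniformly bounded cover witnessing that $X$ is connected.

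I do not expect a serious obstacle: the whole statement amounts to feeding the map to a point into the two characterizations. The only care needed is bookkeeping — reading off ``$\mathcal{T}$ coarsens $f^{-1}(\mathcal{V})$'' to conclude that $X$ itself is (up to equality) a member of $\mathcal{T}$, checking that coarse surjectivity is automatic for maps into bounded spaces, and noting that trivial extensions preserve $\mathcal{U}$-connectedness. One should also tacitly assume $X$ is nonempty, since otherwise the test map $c$ fails to be coarsely surjective and the second biconditional degenerates.
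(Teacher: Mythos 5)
Your proposal is correct: the paper states Proposition~\ref{mapsto1} as an ``easy observation'' and supplies no proof, and your argument --- testing against the map to a one-point space, using the characterization of $\asdim X = 0$ via uniform boundedness of the families of $\mathcal{U}$-components for the first equivalence and Lemma~\ref{mono} for the second --- is exactly the intended route. The only caveat, which you already flag, is the degenerate case $X = \emptyset$, which the paper tacitly ignores.
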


\begin{Remark}\label{RoeCoarse}
If $f$, $g$ and $h$ are ls-continuous maps such that $f = gh$ and $h$ is surjective, then $g$ and $h$ are both proper whenever $f$ is proper. From this and Theorem \ref{factsys} it follows easily that the classes of coarsely monotone coarse maps and coarsely light coarse maps form a factorization system on Roe's coarse category (i.e.~the subcategory of the coarse category consisting of the coarse maps). 
\end{Remark}

\section{Pullbacks in the coarse category} \label{secPb}
In this section we collect some basic facts about pullbacks in the coarse category which we will need in the next section. Suppose the following diagram of ls-continuous maps represents a pullback in the coarse category:

\begin{equation} \label{pb1}
\begin{gathered}
\xymatrix {
P \ar[d]_g \ar[r]^{j} & C \ar[d]^f \\
A \ar[r]_h & B \\
}
\end{gathered}
\end{equation}
Explicitly, this means that the above square commutes up to closeness, and for any ls-space $X$ and any ls-continuous maps $u: X \rightarrow A$ and $v: X \rightarrow C$ such that $hu \sim fv$, there is a unique-up-to-closeness map $w: X \rightarrow P$ such that $gw \sim u$ and $jw \sim v$. Consider the product $A \times C$, with the ls-structure given by all families $\mathcal{U}$ such that $\pi_A(\mathcal{U})$ and $\pi_C(\mathcal{U})$ are uniformly bounded, where $\pi_A: A \times C \rightarrow A$ and $\pi_C: A \times C \rightarrow C$ are the evident projections. There is a canonical ls-continuous map $k = (g,j): P \rightarrow A \times C$.

\begin{Lemma} \label{emb}
The map $k = (g,j)$ given above is a coarse embedding.
\end{Lemma}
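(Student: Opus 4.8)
The plan is to unwind what it means for \eqref{pb1} to be a pullback and to exhibit an explicit (representative of a) model for $P$, namely the subspace $P' \subseteq A \times C$ consisting of those pairs $(a,c)$ with $h(a)$ close to $f(c)$ in a suitable sense, and then show that the canonical map $k$ is close to the inclusion $P' \hookrightarrow A \times C$, which is a coarse embedding by definition of the subspace structure. More precisely, first I would construct a candidate pullback object directly: take $\widetilde{P} = \{(a,c) \in A \times C\}$ equipped with the ls-structure induced from $A \times C$ — but this does not have the right universal property unless we restrict to pairs that are ``matched'' by $h$ and $f$. Since the square only commutes up to closeness, the natural choice is to fix a uniformly bounded cover $\mathcal{W}$ of $B$ witnessing $h g \sim f j$ — wait, more cleanly: fix any uniformly bounded cover $\mathcal{W}_0$ of $B$ and set $P' = \{(a,c) : h(a)\,\mathcal{W}_0\,f(c)\}$, noting that for $\mathcal{W}_0$ large enough this is coarsely surjective onto the ``true'' set of matched pairs.

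Then I would verify that $P'$, with the two projections $g' = \pi_A|_{P'}$ and $j' = \pi_C|_{P'}$, satisfies the universal property of the pullback: given $u : X \to A$, $v : X \to C$ with $hu \sim fv$, the map $w = (u,v) : X \to P'$ lands in $P'$ provided $\mathcal{W}_0$ was chosen to contain the cover witnessing $hu \sim fv$ — here one uses that $X$ ranges over \emph{all} ls-spaces, so strictly one must be slightly careful and instead take $P'$ defined using a family large enough to absorb $hu \sim fv$; the cleanest fix is to note that closeness of $hu$ and $fv$ is witnessed by the bounded family $\{\{hu(x), fv(x)\}\}$, and ls-continuity of $u,v$ forces this into a controlled region, so enlarging $\mathcal{W}_0$ once suffices. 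Uniqueness up to closeness of $w$ is immediate since $w$ is determined on underlying sets by $u$ and $v$ up to the closeness witnessed in $A$ and $C$. Having established that $P'$ with $(g',j')$ is a pullback, and knowing pullbacks are unique up to coarse equivalence, there is a coarse equivalence $\phi : P \to P'$ with $g'\phi \sim g$ and $j'\phi \sim j$, hence $k = (g,j) \sim \iota \circ \phi$ where $\iota : P' \hookrightarrow A \times C$ is the inclusion. Since $\iota$ is a coarse embedding (the ls-structure on $P'$ is by definition the restriction of that on $A \times C$) and $\phi$ is a coarse equivalence, $k$ is close to a coarse embedding, hence is itself a coarse embedding by the earlier remarks that coarse embeddings are closed under composition with coarse equivalences and invariant under closeness.

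The step I expect to be the main obstacle is the construction and verification of the explicit model $P'$, and in particular pinning down exactly which ``matched pairs'' subspace of $A \times C$ has the correct universal property, given that the square in \eqref{pb1} only commutes up to closeness rather than on the nose. One has to choose the defining uniformly bounded family for $P'$ large enough to accommodate the closeness data of \emph{arbitrary} test cones $(u,v)$ simultaneously; the resolution is that ls-continuity of $u$ and $v$ together with ls-continuity of $h$ and $f$ bounds $\{\{hu(x),fv(x)\}\}$ in terms of a single uniformly bounded cover of $B$ independent of $X$ (since it refines $\st$'s of the covers controlling $h,f$ applied to any cover), so one fixed choice of $\mathcal{W}_0$ works. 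Once that subtlety is handled, everything else — ls-continuity of the projections, the coarse surjectivity needed for uniqueness, and the final ``close to a coarse embedding is a coarse embedding'' step — is routine and follows from the basic facts recalled in the Preliminaries.
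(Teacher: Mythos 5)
There is a genuine gap at the central step of your argument: the claim that $P' = \{(a,c) \in A \times C \mid h(a)\,\mathcal{W}_0\,f(c)\}$, for one fixed uniformly bounded cover $\mathcal{W}_0$ of $B$, satisfies the universal property of the pullback. Your proposed justification --- that ls-continuity of $u,v,h,f$ forces the witnessing family $\{\{hu(x),fv(x)\}\}$ to refine a single uniformly bounded cover of $B$ independent of the test data --- is false: ls-continuity is not controlled by any single cover, and different test pairs $(u,v)$ require arbitrarily coarse witnesses. In fact, if your claim were correct, then $P'$ would always be a pullback and pullbacks would always exist in the coarse category, contradicting Example~\ref{nopb} of the paper; that example is exactly an instance where, for every fixed $\mathcal{W}_0$ (equivalently every $N$), the test object $Q = f^{-1}(B(0,N+1))$ admits no map to $A \times_{\mathcal{W}_0} C \cong f^{-1}(B(0,N))$ compatible up to closeness with the inclusion into $C$. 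So ``enlarging $\mathcal{W}_0$ once'' does not suffice, and the subsequent appeal to uniqueness of pullbacks has nothing to rest on. The paper avoids any explicit model: it argues by contradiction, taking a uniformly bounded family $\mathcal{U}$ in $A \times C$ with $k^{-1}(\mathcal{U})$ not uniformly bounded, forming the set $W$ of pairs $(p,p')$ with $k(p)\,\mathcal{U}\,k(p')$ equipped with the trivial (singletons-only) ls-structure, and applying the \emph{uniqueness} clause of the pullback property to the two projections $W \to P$ to force them to be close, which is impossible.

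Your idea of comparing $P$ with a matched-pairs subspace can be salvaged, but without ever claiming $P'$ is a pullback: choose $\mathcal{S}$ coarsening the family witnessing $hg \sim fj$, set $P' = A \times_{\mathcal{S}} C$ with the subspace structure, and apply the universal property \emph{of $P$} to the test object $P'$ with its two projections to get $w : P' \to P$; the codomain restriction $k' : P \to P'$ of $k$ is ls-continuous, and the uniqueness clause (applied to the test data $(g,j)$) gives $wk' \sim \mathrm{id}_P$, while $k'w \sim \mathrm{id}_{P'}$ is checked coordinatewise since $P'$ carries the subspace structure of the product. Then $k = \iota k'$ with $\iota$ a coarse embedding and $k'$ a coarse equivalence, so $k$ is a coarse embedding. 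As written, however, your proof does not go through, because the universal property you assert for $P'$ fails in general.
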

\begin{proof}
Suppose not. Let $\mathcal{U}$ be a uniformly bounded family in $A \times C$ such that $k^{-1}(\mathcal{U})$ is not uniformly bounded. Let $W$ be the set $\{(p, p') \in P \times P \mid k(p)\mathcal{U}k(p') \}$ equipped with the ls-structure consisting only of families of singleton sets. The projections $\alpha_1, \alpha_2: W \rightarrow P$ are ls-continuous, and by construction, $k\alpha_1$ and $k \alpha_2$ are close. It follows that $g \alpha_1 \sim g \alpha_2$ and $j \alpha_1 \sim j \alpha_2$. But then the uniqueness part of the pullback property forces $\alpha_1$ and $\alpha_2$ to be close, which cannot be the case since $k^{-1}(\mathcal{U})$ is not uniformly bounded.
\end{proof}

\begin{Remark}
The above lemma also follows from the observation that the monomorphisms in the coarse category are precisely the coarse embeddings. 
\end{Remark}

From Lemma \ref{emb} above it follows that the pullback, when it exists, can be canonically embedded into the product. Thus, we can replace $P$ by the image of the map $k$ and replace $j$ and $g$ by the obvious projections and still obtain a pullback. The fact that the resulting diagram must commute up to closeness means that we may always assume that $P$ is a subset of 
$$
A \times_\mathcal{S} C = \{(a, c) \in A \times C \mid h(a)\mathcal{S}f(c)\}
$$
for some $\mathcal{S}$ a uniformly bounded family in $B$. We are now ready to present an example to show that not all pullbacks exist in the coarse category.

\begin{Example} \label{nopb}
Let $A$ be the one point set, $B$ be the natural numbers with the ls-structure arising from the usual metric, and $C$ the subspace 
$$ \{(a_0, a_1 \ldots ) \mid (i > a_0) \Rightarrow (a_i = 0) \} \subseteq \bigoplus_{i=1}^{\infty} \mathbb{N}$$
where the metric on $\bigoplus_{i=1}^{\infty} \mathbb{N}$ is Euclidean distance. Let $h: A \rightarrow B$ be the inclusion of $0$ and let $f: C \rightarrow B$ be the projection $(a_0, a_1, \ldots) \mapsto a_0$. Then $f$ and $h$ are ls-continuous, but no pullback of $f$ along $h$ exists.
\end{Example}
\begin{proof}
Suppose a pullback does exist, given by the diagram (\ref{pb1}). By the above arguments, we may set $P \subseteq A \times_\mathcal{S} C$ for some uniformly bounded family $\mathcal{S}$ in $B$. Since $h$ is the inclusion of $0$, this means that $P$ can be viewed as a subspace of $f^{-1}(B(0, N))$ for some $N > 0$. Let $Q$ be the subspace $f^{-1}(B(0, N+1))$, with $l: Q \rightarrow C$ the obvious inclusion and $m$ the unique map to $A$. Since $hm \sim fl$, by the property of the pullback, there must be a unique ls-continuous map $k:  Q \rightarrow P$ such that $jk \sim l$. In particular, everything in the image of $l$ should be a bounded distance from the image of $j$, but while the $(N+1)^{\mathrm{th}}$ coordinate in $P$ has to be $0$, in $Q$ it can be arbitrary, giving the required contradiction. 
\end{proof}

Note that products do exist in the coarse category (as defined in the present paper): the product of a ls-space $A$ and $B$ is given by the set $A \times B$ with the ls-structure consisting of all those families $\mathcal{U}$ such that $\pi_A(\mathcal{U})$ and $\pi_B(\mathcal{U})$ are both uniformly bounded, where $\pi_A$ and $\pi_B$ are the projections.

\section{Coarsely monotone maps arising from a reflection}
Recall that a subcategory $\mathbb{X}$ of a category $\mathbb{C}$ is \textbf{reflective} if for every object $C$ of $\mathbb{C}$, there is an object $I(C)$ of $\mathbb{X}$ and morphism $\eta_C: C \rightarrow I(C)$ which is universal in the following sense: for any $X$ an object of $\mathbb{X}$ and $f: C \rightarrow X$ a morphism in $\mathbb{C}$, there is a unique morphism $g: I(C) \rightarrow X$ in $\mathbb{X}$ such that $g\eta_C = f$. The assignment $C \mapsto I(C)$ extends to a functor $I: \mathbb{C} \rightarrow \mathbb{X}$ in the obvious way, which we call the \textbf{reflection} of $\mathbb{C}$ onto $\mathbb{X}$. 

Let $\mathbf{CHaus}$ be the category of compact Hausdorff spaces and continuous maps (the category on which the classical monotone and lights maps constitute a factorization system), and let $\mathbf{CHaus}_0$ be the full subcategory consisting of the totally disconnected spaces. Then $\mathbf{CHaus}_0$ is reflective in $\mathbf{CHaus}$. Indeed, for $X$ a compact Hausdorff space, let $I(X)$ be the set of  connected components of $X$ with the quotient topology, and let $\eta_X: X \rightarrow I(X)$ be the quotient map. Then $I(X)$ is a totally disconnected compact Hausdorff space, and $\eta_X$ is the universal continuous map from $X$ to a totally disconnected compact Hausdorff space (see for example \cite{Bourb}). 

The monotone maps between compact Hausdorff spaces can be recovered from this reflection in the following way. For a general reflection $I: \mathbb{C} \rightarrow \mathbb{X}$, let $\mathcal{E}_I$ be the class of morphisms $f$ such that $I(f)$ is an isomorphism. Thus in the case currently being considered, $\mathcal{E}_I$ is the class of continuous maps which induce a bijection on connected components. Every monotone map is in $\mathcal{E}_I$, but not every map in $\mathcal{E}_I$ is monotone. It is easy to check that a continuous map $f$ is monotone if and only if for every pullback
$$ \xymatrix{
P \ar[d]_g \ar[r] & X \ar[d]^f \\
1 \ar[r] & Y \\
}
$$
where $1$ is the one point space, the map $g$ is in $\mathcal{E}_I$. 

\begin{Remark}
In fact, a continuous map $f$ is monotone if and only if \emph{every} pullback of $f$ is in $\mathcal{E}_I$. The construction of the class of monotone maps from the reflection $I$ is a special case of a much more general process outlined in \cite{CarJanKelPar}. The context in \cite{CarJanKelPar}, however, is that of a category admitting pullbacks, of which the coarse category is not an example as we have seen.
\end{Remark}

Since a compact Hausdorff space is totally disconnected if and only if it has inductive dimension $0$ (see for example \cite{Arh}), one might wonder if a similar process can be applied using the class of ls-spaces of asymptotic dimension zero to arrive at the class of coarsely monotone maps. We must first describe the reflection. If $X$ is an ls-space, then let $I(X)$ be the ls-space whose underlying set is the same as $X$, but whose ls-structure consists of all families which refine the set of $\mathcal{U}$-components of $X$ for some uniformly bounded family $\mathcal{U}$ in $X$. Clearly $I(X)$ has asymptotic dimension zero, and the identity set map $\eta_X: X \rightarrow I(X)$ is ls-continuous. The following lemma is easy to prove.

\begin{Lemma}
Let $f: X \rightarrow Y$ be an ls-continuous map, where $\asdim Y = 0$. Then $f$ factors (up to closeness) uniquely (up to closeness) through $\eta_X: X \rightarrow I(X)$.
\end{Lemma}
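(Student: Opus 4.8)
The plan is to use the only natural candidate. Since $\eta_X$ is the identity on underlying sets, I would define $g\colon I(X)\to Y$ to be the same set map as $f$; then $g\eta_X$ and $f$ are literally the same set map, so $g\eta_X \sim f$ is automatic, and the entire content of the statement reduces to (i) checking that this $g$ is ls-continuous when regarded as a map out of $I(X)$, and (ii) checking uniqueness up to closeness.

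For (i), I would take an arbitrary uniformly bounded family $\mathcal{W}$ in $I(X)$; by definition of the ls-structure on $I(X)$ it refines the family of $\mathcal{U}$-components of $X$ for some uniformly bounded $\mathcal{U}$ in $X$. The key point is that $f$ carries each $\mathcal{U}$-component of $X$ into a single $f(\mathcal{U})$-component of $Y$: a $\mathcal{U}$-chain $x=x_1,\dots,x_k=x'$ maps to a sequence in which each consecutive pair $f(x_i),f(x_{i+1})$ lies in a common member $f(U)$ of $f(\mathcal{U})$. Now $f(\mathcal{U})$ is uniformly bounded in $Y$ because $f$ is ls-continuous, and since $\asdim Y = 0$ the $f(\mathcal{U})$-components of $Y$ form a uniformly bounded family $\mathcal{V}$ (this is exactly where the hypothesis is used; it is the characterisation of asymptotic dimension $0$ recalled just before Proposition~\ref{lightequiv}). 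Hence each member of $g(\mathcal{W}) = f(\mathcal{W})$ sits inside a member of $\mathcal{V}$, so $g(\mathcal{W})$ is uniformly bounded and $g$ is ls-continuous.

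For (ii), suppose $g'\colon I(X)\to Y$ is another ls-continuous map with $g'\eta_X\sim f$. Since $\eta_X$ is the identity on underlying sets, the statement that $g'\eta_X$ and $g\eta_X$ are close as maps $X\to Y$ is literally the same statement as that $g'$ and $g$ are close as maps $I(X)\to Y$; and $g'\eta_X\sim f = g\eta_X$, so transitivity of the closeness relation gives $g'\sim g$.

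I expect the ls-continuity check in (i) to be the only real obstacle: the place where $\asdim Y = 0$ is genuinely needed is precisely the passage from the uniformly bounded family $f(\mathcal{U})$ to the uniformly bounded family of its components in $Y$, and everything else is bookkeeping about set maps and the closeness relation.
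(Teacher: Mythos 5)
Your argument is correct and is exactly the intended one: the paper omits the proof (labelling the lemma ``easy to prove''), and the natural argument is precisely yours --- take $g$ to be $f$ on underlying sets, verify ls-continuity out of $I(X)$ by noting that a $\mathcal{U}$-component of $X$ maps into a single $f(\mathcal{U})$-component of $Y$ and invoking the characterisation of $\asdim Y=0$ (components of a uniformly bounded family are uniformly bounded), and get uniqueness for free since $\eta_X$ is the identity on underlying sets and closeness only refers to the ls-structure of the codomain. No gaps.
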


It follows that the assignment $X \mapsto I(X)$ gives a reflection from the coarse category to the full subcategory of ls-spaces of asymptotic dimension zero. In particular, the assignment $X \mapsto I(X)$ extends to a functor $I$, which (in terms of representatives) assigns to each ls-continuous map $f: X \rightarrow Y$ an ls-continuous map $I(f): I(X) \rightarrow I(Y)$ (which is the same as $f$ at the level of underlying sets). As in the classical case, we let $\mathcal{E}_I$ be the class of all (equivalence classes of) ls-continuous maps $f$ such that $I(f)$ is a isomorphism (i.e.~is represented by a coarse equivalence). Clearly:

\begin{Lemma}\label{eichar}
A coarsely surjective ls-continuous map $f: X \rightarrow Y$ is in $\mathcal{E}_I$ if and only if for every $\mathcal{V}$ a uniformly bounded cover in $Y$ there is a uniformly bounded family $\mathcal{U}$ in $X$ such that the inverse image of each $\mathcal{V}$-component of $Y$ is contained in a $\mathcal{U}$-component of $X$.
\end{Lemma}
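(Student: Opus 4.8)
The plan is to unwind the three layers of definition involved --- the functor $I$, the characterisation of coarse equivalences, and the ls-structures on $I(X)$ and $I(Y)$ --- and to exhibit the stated condition as a direct translation of ``$I(f)$ is an isomorphism''. Recall that $I(f)$ is the identity of $f$ at the level of underlying sets; that $f \in \mathcal{E}_I$ means $I(f)$ is an isomorphism in the coarse category, equivalently (since being a coarse equivalence is invariant under closeness and $I(f)$ is itself an ls-continuous representative of that class) that $I(f) \colon I(X) \to I(Y)$ is a coarse equivalence; and that an ls-continuous map is a coarse equivalence exactly when it is coarsely surjective and a coarse embedding. So $f \in \mathcal{E}_I$ iff $I(f)$ is both coarsely surjective and a coarse embedding, and the first step is to dispose of coarse surjectivity, which I would show holds automatically. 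Indeed, if $\mathcal{W}$ is a uniformly bounded family in $Y$ with $Y \subseteq \st(f(X), \mathcal{W})$, then each member of $\mathcal{W}$ is $\mathcal{W}$-connected, so $\mathcal{W}$ refines the family of $\mathcal{W}$-components of $Y$ and is therefore uniformly bounded in $I(Y)$; since $I(Y) = Y$ as sets and $I(f) = f$, the same $\mathcal{W}$ witnesses coarse surjectivity of $I(f)$. Hence $f \in \mathcal{E}_I$ iff $I(f)$ is a coarse embedding.

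Next I would observe that, by the very definition of the ls-structure on $I(Y)$, the families $\{\,\mathcal{V}\text{-components of }Y\,\}$, as $\mathcal{V}$ ranges over uniformly bounded covers of $Y$, are cofinal among the uniformly bounded families of $I(Y)$; and that uniformly bounded families of $I(X)$ are closed under refinement. Together these reduce the coarse embedding condition for $I(f)$ to the following: for every uniformly bounded cover $\mathcal{V}$ of $Y$, the family $f^{-1}(\{\,\mathcal{V}\text{-components of }Y\,\})$ is uniformly bounded in $I(X)$. Finally, by the definition of the ls-structure on $I(X)$, this last statement says precisely that this family refines the set of $\mathcal{U}$-components of $X$ for some uniformly bounded family $\mathcal{U}$ in $X$ --- which, spelled out, is exactly the assertion that the inverse image under $f$ of each $\mathcal{V}$-component of $Y$ lies inside a single $\mathcal{U}$-component of $X$, i.e.\ the condition in the statement. (Passing freely between uniformly bounded covers and uniformly bounded families throughout is harmless, since adjoining singletons to $\mathcal{V}$ changes neither its $\mathcal{V}$-components nor its uniform boundedness.)

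I do not expect a genuine obstacle here; the lemma is essentially a bookkeeping exercise, which is presumably why it is stated with ``Clearly''. The only two points that merit a moment of attention are the automatic coarse surjectivity of $I(f)$ and the observation that it suffices to test the coarse embedding condition against the cofinal families of $\mathcal{V}$-components --- once these are in hand the equivalence is an immediate translation of definitions.
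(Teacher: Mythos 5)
Your proof is correct: the paper itself offers no argument (the lemma is introduced with ``Clearly:''), and your unwinding of the definitions --- coarse equivalence $=$ coarsely surjective $+$ coarse embedding, automatic coarse surjectivity of $I(f)$ since the ls-structure on $I(Y)$ contains that of $Y$, and testing the embedding condition only on the cofinal families of $\mathcal{V}$-components --- is exactly the routine verification the authors evidently had in mind. No gaps.
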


We will need the following easy categorical lemma.

\begin{Lemma}\label{rightcancel}
Let $I: \mathbb{C} \rightarrow \mathbb{X}$ be any reflection. Then, for any pair of morphisms $f: A \rightarrow B$, $g: B \rightarrow C$, such that $f$ has a right inverse, we have 
$$
gf \in \mathcal{E}_I \Rightarrow g \in \mathcal{E}_I.
$$
\end{Lemma}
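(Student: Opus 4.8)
The plan is to argue purely formally, using only that $I$ is a functor together with the elementary fact that a morphism possessing both a left inverse and a right inverse is an isomorphism (and the two one-sided inverses then agree). Note in particular that we will not need anything about $\mathbb{X}$ being reflective beyond functoriality of $I$, nor the specific description of $\mathcal{E}_I$: by definition $\mathcal{E}_I$ consists exactly of those morphisms that $I$ sends to isomorphisms.

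First I would fix a right inverse $s \colon B \to A$ of $f$, so $fs = 1_B$, and apply $I$ to get $I(f)\,I(s) = I(fs) = 1_{I(B)}$. Thus $I(f)$ has a right inverse, namely $I(s)$. Next, since $gf \in \mathcal{E}_I$, the morphism $I(gf) = I(g)\,I(f)$ is an isomorphism; call it $\varphi$. Then $\varphi^{-1}I(g)$ is a left inverse of $I(f)$, since $\big(\varphi^{-1}I(g)\big)\,I(f) = \varphi^{-1}\big(I(g)I(f)\big) = \varphi^{-1}\varphi = 1_{I(A)}$.

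Now $I(f)$ has both a left inverse and a right inverse, hence is an isomorphism. Consequently $I(g) = \varphi\,\big(I(f)\big)^{-1}$ is a composite of two isomorphisms, so $I(g)$ is an isomorphism, i.e.\ $g \in \mathcal{E}_I$, as required.

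I do not expect any real obstacle: the whole argument is a two-line diagram chase. The only points that need a moment's care are (i) that ``having a right inverse'' is automatically preserved by any functor, which is immediate from $I(fs) = I(f)I(s)$, and (ii) the standard observation that a left inverse and a right inverse of the same morphism must coincide, which upgrades ``$I(f)$ split epi and split mono'' to ``$I(f)$ iso''.
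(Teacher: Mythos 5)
Your proof is correct and is essentially the same formal diagram chase as the paper's: both use only functoriality of $I$, the right inverse $I(s)$ of $I(f)$ coming from $fs=1_B$, and the invertibility of $I(gf)$. The only (inessential) organizational difference is that you first upgrade $I(f)$ to an isomorphism (split epi via $I(s)$, split mono via $\varphi^{-1}I(g)$) and then write $I(g)=\varphi\,I(f)^{-1}$, whereas the paper directly exhibits $I(f)\,I(gf)^{-1}$ as a two-sided inverse of $I(g)$, cancelling $I(f)$ on the right by means of its right inverse.
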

\begin{proof}
Let $h$ be the two-sided inverse of $I(gf) = I(g)I(f)$. Then $I(f)h$ is a right inverse of $I(g)$. We also have 
$$
I(f)hI(g)I(f) = I(f)
$$ 
so applying the right inverse of $I(f)$ to both sides, we have that $I(f)h$ is a two-sided inverse of $I(g)$ as required.
\end{proof}

We can already observe from Lemma \ref{eichar} that coarsely monotone maps are always in $\mathcal{E}_I$. The converse, however, does not hold. By analogy with the classical case, one might ask for a characterisation of coarsely monotone maps in terms of stability under certain pullbacks. However, as we have seen, not all pullbacks exist in the coarse category. This motivates us to instead consider an alternative condition.

\begin{Lemma}\label{equipb}
Let $\mathbb{C}$ be a category which admits all pullbacks, and let $\mathcal{E}_I$ be the class of morphisms inverted by a reflection $I: \mathbb{C} \rightarrow \mathbb{X}$. Then the following are equivalent for morphisms $f: X \rightarrow Y$ and $j: Z \rightarrow Y$ in $\mathbb{C}$:
\begin{itemize}
\item[$(\mathsf{P_1})$] the pullback of $f$ along $j$ is in $\mathcal{E}_I$,
\item[$(\mathsf{P_2})$] for every commutative diagram 
$$ \xymatrix{
W \ar[r]^i \ar[d]_g & X \ar[d]^f \\
Z \ar[r]_j & Y\\
}
$$
there is a commutative diagram with $e \in \mathcal{E}_I$
\begin{equation}\label{factor}
\begin{gathered}
 \xymatrix{
W \ar[rd]^d \ar@/_/[rdd]_g \ar@/^/[rrd]^i \\
& P \ar[d]_e \ar[r]^c & X \ar[d]^f \\
& Z \ar[r]_j & Y\\
}
\end{gathered}
\end{equation}
\end{itemize}
\end{Lemma}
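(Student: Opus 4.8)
The plan is to prove the equivalence $(\mathsf{P_1}) \Leftrightarrow (\mathsf{P_2})$ by exploiting the universal property of the pullback, since by hypothesis $\mathbb{C}$ has all pullbacks. Fix a pullback square
$$ \xymatrix{
Q \ar[r]^q \ar[d]_p & X \ar[d]^f \\
Z \ar[r]_j & Y \\
} $$
of $f$ along $j$; by $(\mathsf{P_1})$ we have $p \in \mathcal{E}_I$. For the implication $(\mathsf{P_1}) \Rightarrow (\mathsf{P_2})$, given a commutative square with corner $W$, $g : W \to Z$, $i : W \to X$, the pullback property yields a unique $d : W \to Q$ with $pd = g$ and $qd = i$. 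Taking $P = Q$, $e = p$, $c = q$ gives exactly the diagram (\ref{factor}), and $e = p \in \mathcal{E}_I$ by assumption, so $(\mathsf{P_2})$ holds.

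For the converse $(\mathsf{P_2}) \Rightarrow (\mathsf{P_1})$, the idea is to feed the pullback square itself into $(\mathsf{P_2})$: take $W = Q$, $g = p$, $i = q$ in the commutative square hypothesis of $(\mathsf{P_2})$. Then $(\mathsf{P_2})$ produces an object $P$, a map $e : P \to Z$ with $e \in \mathcal{E}_I$, a map $c : P \to X$ with $fc = je$, and a map $d : Q \to P$ with $ed = p$ and $cd = q$. Now the universal property of the pullback $Q$ applied to the cone $(e, c)$ over $(j, f)$ (which commutes since $fc = je$) gives a unique map $s : P \to Q$ with $ps = e$ and $qs = c$. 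The claim is that $s$ and $d$ are mutually inverse in the homotopy-free categorical sense, or at least that $ds$ and $sd$ are identities; indeed $p(ds) = (pd)s$... wait — $d$ goes $Q \to P$ and $s$ goes $P \to Q$, so $ds : Q \to Q$ satisfies $p(ds) = (pd)s$ is not quite it; rather $sd : Q \to Q$ satisfies $p(sd) = (ps)d = ed = p$ and $q(sd) = (qs)d = cd = q$, so by uniqueness in the pullback $sd = 1_Q$. Thus $d$ is a split mono with retraction $s$, and more to the point $e = ps$ with $sd = 1_Q$ shows $p = ed$ where $d$ has a right inverse? We instead observe $p = ed$ and want $p \in \mathcal{E}_I$; since $e \in \mathcal{E}_I$ it suffices to know $d \in \mathcal{E}_I$, and for that we use that $sd = 1_Q$ forces $I(s)I(d) = \mathrm{id}$, while $ps = e$ with $p, e$ to be compared: applying $I$, $I(p)I(s) = I(e)$, and combined with $I(p) = I(e)I(d)$ we get $I(e)I(d)I(s) = I(e)$; since $I(e)$ is an isomorphism, $I(d)I(s) = \mathrm{id}$, so $I(d)$ is a two-sided inverse of $I(s)$ and in particular $I(d)$ is an isomorphism, hence $d \in \mathcal{E}_I$, and therefore $p = ed \in \mathcal{E}_I$ because $\mathcal{E}_I$ is closed under composition (morphisms inverted by a functor always are). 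This establishes $(\mathsf{P_1})$.

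The main obstacle — and the step requiring care — is the bookkeeping in the converse direction: one must correctly identify which cone to feed into the pullback's universal property and then chase the identities $sd = 1_Q$, $ps = e$, $pd = p$ (really $ed = p$) through the reflection functor $I$ to conclude $I(d)$ is invertible. Everything else is a direct application of universal properties. It is worth noting that Lemma~\ref{rightcancel} gives the relevant cancellation: once we know $d$ has a right inverse $s$ (up to the categorical identities above) and $ed = p \in$ (we want to show) $\mathcal{E}_I$, the right-cancellation phenomenon is exactly of the shape handled there, though here it is cleaner to argue directly with $I$ since $d$ is split. I would present the argument with explicit diagrams for both implications and relegate the $I$-functor identity chase to a short displayed computation.
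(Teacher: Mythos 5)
Your proposal is correct and takes essentially the same route as the paper: you plug the pullback square itself into $(\mathsf{P_2})$, use the universal property to obtain the mediating map $s$ with $sd = 1_Q$ and $ps = e$, and then cancel to conclude the projection is in $\mathcal{E}_I$. The only cosmetic difference is that where the paper invokes Lemma~\ref{rightcancel} (applied to $s$, which has right inverse $d$, and the projection, using $ps = e \in \mathcal{E}_I$), you inline the equivalent computation with the functor $I$ to show $I(d)$ is invertible and then compose $p = ed$.
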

\begin{proof}
$(\mathsf{P_1} \Rightarrow (\mathsf{P_2})$ is obvious -- simply let $e$ be the pullback of $f$ along $j$.

$(\mathsf{P_2} \Rightarrow (\mathsf{P_1})$: In (\ref{factor}) above, let $W$ be the pullback of $f$ along $j$ and $i$ and $g$ the projections. It follows from the property of the pullback that $d$ has a left inverse $s$ such that $gs = e \in \mathcal{E}_I$. Thus $s$ has a right inverse, so applying Lemma~\ref{rightcancel}, we obtain $g \in \mathcal{E}_I$ as required.
\end{proof}

Let $\mathcal{F}$ be a class of morphisms in a category $\mathbb{C}$. We say that a morphism $f: X \rightarrow Y$ in $\mathcal{E}_I$ is \textbf{stably in $\mathcal{E}_I$ with respect to $\mathcal{F}$} if for every $j: Z \rightarrow Y$ in $\mathcal{F}$, the condition $(\mathsf{P}_2)$ in the above lemma is satisfied. The classical monotone maps are thus precisely the continuous maps which are stably in $\mathcal{E}_I$ with respect to all maps $j$ whose domain is the one point space (where $I$ is the reflection onto the totally disconnected spaces). We are now ready to state an analogue for coarsely monotone maps. Recall that a ls-space is called \emph{monogenic} if its ls-structure is generated by a single family of subsets \cite{Roe}.

\begin{Theorem} \label{stabilize}
Let $\mathcal{F}$ be the set of all (equivalence classes of) ls-continuous maps whose domain is monogenic. Then a map $f: X \rightarrow Y$ is coarsely monotone if and only if it is stably in $\mathcal{E}_I$ with respect to $\mathcal{F}$, where $I$ is the reflection onto ls-spaces of asymptotic dimension $0$.
\end{Theorem}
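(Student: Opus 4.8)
The plan is to prove the two implications separately, using Lemma~\ref{equipb} and the explicit description of $\mathcal{E}_I$ from Lemma~\ref{eichar}.

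First I would prove that every coarsely monotone $f: X \rightarrow Y$ is stably in $\mathcal{E}_I$ with respect to $\mathcal{F}$. So fix a commutative square with an ls-continuous $j: Z \rightarrow Y$ whose domain $Z$ is monogenic, say with ls-structure generated by a single family $\mathcal{W}$, together with $i: W \rightarrow X$ and $g: W \rightarrow Z$ with $ji \sim fg$. I would construct the object $P$ directly as a ``coarse pullback restricted along the generating family'', mimicking the construction in Section~\ref{secPb}: take $P$ to be the subset of $Z \times X$ consisting of pairs $(z,x)$ with $j(z)\,\mathcal{S}\,f(x)$ for a suitable fixed uniformly bounded family $\mathcal{S}$ in $Y$ (chosen to witness $ji \sim fg$ together with the ls-continuity of $j$ applied to $\mathcal{W}$), equipped with the ls-structure making $P$ a subspace of $Z \times X$ but \emph{refined} so that members of uniformly bounded families project to $\mathcal{W}$-bounded sets in $Z$. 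The map $d: W \rightarrow P$ is $(g,i)$; $c: P \rightarrow X$ and $e: P \rightarrow Z$ are the projections; commutativity of the outer diagram and of the bottom square is immediate. The content is to check $e \in \mathcal{E}_I$: given a uniformly bounded cover $\mathcal{V}$ of $Z$, I must produce a uniformly bounded family $\mathcal{U}$ in $P$ such that $e^{-1}$ of each $\mathcal{V}$-component of $Z$ sits inside a $\mathcal{U}$-component. Since $Z$ is monogenic, it suffices to handle $\mathcal{V} = \mathcal{W}$; then a $\mathcal{W}$-component of $Z$ is mapped by $j$ into a $j(\mathcal{W})$-component of $Y$, and applying Lemma~\ref{mono} to the coarsely monotone map $f$ (with the uniformly bounded cover $j(\mathcal{W})$ of $Y$, enlarged appropriately) produces a uniformly bounded family $\mathcal{U}_X$ in $X$ and a family $\mathcal{T}$ of $\mathcal{U}_X$-connected subsets coarsening $f^{-1}(j(\mathcal{W}))$ with $f(\mathcal{T})$ uniformly bounded; pulling $\mathcal{T}$ and $\mathcal{W}$ back to $P$ through $c$ and $e$ and taking the common refinement with the subspace structure gives the required $\mathcal{U}$, the point being that the $X$-coordinate stays inside a single $\mathcal{T}$-member while the $Z$-coordinate moves within a $\mathcal{W}$-component, so fibres of $e$ over a $\mathcal{W}$-component are $\mathcal{U}$-connected.

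For the converse, suppose $f$ is stably in $\mathcal{E}_I$ with respect to $\mathcal{F}$; I want to show $f$ is coarsely monotone, using the criterion of Lemma~\ref{mono}. Coarse surjectivity is easy: take $Z = \{*\}$ (a bounded, hence monogenic space), $j$ a constant map hitting any prescribed point, and note that $P \to Z$ being in $\mathcal{E}_I$ forces $P$ nonempty, i.e.\ every point of $Y$ is within a bounded distance of $f(X)$ after running over the generating family of $Y$ — more carefully, one applies the condition to $j: Y' \to Y$ the inclusion of a bounded set and reads off coarse surjectivity. For the main condition, fix a uniformly bounded cover $\mathcal{V}$ of $Y$ and let $Z$ be the set $Y$ equipped with the monogenic ls-structure generated by $\mathcal{V}$, with $j: Z \to Y$ the identity set map (ls-continuous since $\mathcal{V}$ is uniformly bounded in $Y$). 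Let $W = X$ with $g = f: X \to Z$ (ls-continuous since $f(\mathcal{U})$ refines some coarsening of $\mathcal{V}$... actually one must be slightly careful: $g$ need only be ls-continuous into $Z$, which holds because the $Z$-structure is the $\mathcal{V}$-generated one and $f$ is ls-continuous into $Y$) and $i = \mathrm{id}_X$. The hypothesis $(\mathsf{P_2})$ yields $P$, $e \in \mathcal{E}_I$, $c: P \to X$, $d: X \to P$ with $cd \sim \mathrm{id}_X$, $ed \sim f$. Now I would pull a uniformly bounded family in $P$ witnessing the $\mathcal{E}_I$-property of $e$ (over the generating family $\mathcal{V}$ of $Z$) back along $d$ to $X$: the $\mathcal{V}$-components of $Z=Y$ have $e$-preimages lying in single $\mathcal{U}_P$-components of $P$, so pushing forward along $c$ and using $cd \sim \mathrm{id}$, $ed \sim f$ gives a uniformly bounded family $\mathcal{U}$ in $X$ and a coarsening $\mathcal{T}$ of $f^{-1}(\mathcal{V})$ by $\mathcal{U}$-connected sets with $f(\mathcal{T})$ contained in the $\mathcal{V}$-components of $Y$ up to a bounded error — hence $f(\mathcal{T})$ uniformly bounded. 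This is exactly the condition in Lemma~\ref{mono}, so $f$ is coarsely monotone.

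The main obstacle I expect is the forward direction's construction of $P$ and the verification that it genuinely realizes $(\mathsf{P_2})$ within the coarse category: one has to choose the bounded family $\mathcal{S}$ and the ls-structure on $P$ carefully enough that (a) $d = (g,i)$ is actually ls-continuous into $P$ (this uses that $Z$ is monogenic, so we only need to control one family $\mathcal{W}$ and its $f$-interaction), (b) the square with $c$, $e$, $f$, $j$ commutes \emph{up to closeness}, and (c) $e \in \mathcal{E}_I$ — and (c) is where coarse monotonicity of $f$ is used in an essential way via Lemma~\ref{mono}. A secondary subtlety, which is why monogenicity of $Z$ is exactly the right hypothesis, is that in Lemma~\ref{eichar} one must exhibit a \emph{single} uniformly bounded $\mathcal{U}$ working for \emph{all} uniformly bounded covers $\mathcal{V}$ of $Z$ simultaneously; this is possible precisely because every such $\mathcal{V}$ refines (a star power of) the single generating family, so it is enough to solve the problem for that one family — if $Z$ were allowed arbitrary ls-structure, the class stably inverted by $I$ would be strictly larger than the coarsely monotone maps, paralleling the classical failure of $\mathcal{E}_I \subsetneq \{\text{monotone}\}$ without the one-point restriction.
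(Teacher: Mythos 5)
Your overall strategy matches the paper's (build an explicit $P$ for the forward direction, test against carefully chosen monogenic $Z$'s for the converse), but both directions contain genuine gaps. In the forward direction, defining $P=\{(z,x): j(z)\,\mathcal{S}\,f(x)\}$ makes membership depend on a \emph{fixed-size} relation between $j(z)$ and $f(x)$, and this is not stable along the connecting chains you need: if $(z,x),(z',x')\in P$ with $z\,\mathcal{W}\,z'$, Lemma~\ref{mono} does give a $\mathcal{U}_X$-chain from $x$ to $x'$ inside one element $T$ of $\mathcal{T}$, but an intermediate point $x_i\in T$ only satisfies that $j(z)$ and $f(x_i)$ lie in a common element of $\st(f(\mathcal{T}),\mathcal{S})$, not of $\mathcal{S}$, so the pairs $(z,x_i)$ may leave $P$. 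Enlarging $\mathcal{S}$ to absorb $f(\mathcal{T})$ does not help, because $\mathcal{T}$ was produced by applying Lemma~\ref{mono} to a cover built from $\mathcal{S}$, so you chase your tail. The paper avoids this circularity by using coarse surjectivity to choose a section-like map $s:Y\to X$ and defining $P=\{(z,x): sj(z)\,\mathcal{Q}\,x\}$, where $\mathcal{Q}$ is the family of $\mathcal{W}$-connected sets from Lemma~\ref{mono}: membership is anchored to the basepoint $sj(z)$ through a single element of $\mathcal{Q}$, so every chain that stays inside one element of $\mathcal{Q}$ automatically stays inside $P$. Some fix of this kind (defining $P$ through the connected family rather than through a pointwise closeness relation) is essential, not a detail.

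The converse direction is more seriously off. Taking $Z$ to be $Y$ with the $\mathcal{V}$-generated structure, $W=X$ and $g=f$ fails twice. First, $g=f:X\to Z$ need not be ls-continuous: the $\mathcal{V}$-generated structure is in general strictly smaller than that of $Y$ (e.g.\ take $\mathcal{V}$ the cover by singletons), so $f(\mathcal{U})$ need not refine any iterated star of $\mathcal{V}$; ls-continuity into $Y$ does not transfer to $Z$. Second, and more fundamentally, even if the square existed, $e\in\mathcal{E}_I$ only controls preimages of $\mathcal{V}$-\emph{components} of $Y$, which are typically unbounded (for $Y=\mathbb{R}$ and $\mathcal{V}$ the unit intervals there is a single component, all of $\mathbb{R}$), so your concluding step ``$f(\mathcal{T})$ contained in the $\mathcal{V}$-components up to bounded error, hence uniformly bounded'' is false, and you get no control on individual elements $f^{-1}(V)$. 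The paper's choice is designed exactly to defeat this: $Z$ is a set of chosen points $s_U$, one per element of the cover, with the \emph{trivial} ls-structure (so all components are singletons and $e^{-1}(s_U)$ must be $\mathcal{W}$-connected), and $W$ is the set of pairs $(x,U)$ with the largest structure making $i$ and $g$ ls-continuous, so that $g^{-1}(s_U)$ is a copy of $f^{-1}(U)$; pushing forward along $c$ then bounds $f(f^{-1}(U))$ via $je\sim fc$. Your coarse-surjectivity step has the same uniformity problem: applying the condition to one bounded set or one point at a time gives witnesses that may grow, whereas the paper applies it once to $j:Z\to Y$ with $Z$ the whole underlying set of $Y$ carrying the trivial structure and $W=\varnothing$, which forces $e$ to be surjective and yields a single uniformly bounded witness.
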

\begin{proof}
$(\Rightarrow)$ Consider a diagram
$$ \xymatrix{
W \ar[r]^i \ar[d]_g & X \ar[d]^f \\
Z \ar[r]_j & Y\\
}
$$
of ls-continuous maps which commutes up to closeness as witnessed by the uniformly bounded cover $\mathcal{T}$ in $Y$, where $f$ is coarsely monotone and $\mathcal{U}$ is a cover that generates the ls-structure on $Z$. Since $f$ is coarsely surjective, there is some $\mathcal{R}$ a uniformly bounded cover in $Y$ such that $Y \subseteq \st(\mathsf{Im}(f), \mathcal{R})$. We now construct the $P$ in diagram (\ref{factor}). For every $y \in Y$, select an $s(y) \in X$ such that $y\mathcal{R}fs(y)$. Define the uniformly bounded family 
$$
\mathcal{S} = \st(\st(j(\mathcal{U}), \mathcal{T}), \mathcal{R}) .
$$
Since $f$ is coarsely monotone, there is a uniformly bounded family $\mathcal{W}$ in $X$ and a family $\mathcal{Q}$ of $\mathcal{W}$-conncted subsets in $X$ such that $f(\mathcal{Q})$ is uniformly bounded. Define $P$ to be the subspace of $Z \times X$ consisting of all pairs $(z, x)$ such that $js(z)\mathcal{Q}x$, and let $e: P \rightarrow Z$ and $c: P \rightarrow X$ be the projections. Note that $e$ is surjective, and that $je$ and $fc$ are close as witnessed by $\st(f(\mathcal{Q}), \mathcal{R})$. If $f(x) \mathcal{T} j(z)$, then by construction of $\mathcal{Q}$, $(z, x)$ is in $P$, so there is a canonical map $d: W \rightarrow P$ making diagram (\ref{factor}) commute. It remains to show that $e$ is in $\mathcal{E}_I$. Suppose $(z, x)$ and $(z', x')$ are points in $P$ such that $z \mathcal{U} z'$. We claim that $(z, x)$ and $(z', x')$ are $\mathcal{U} \times \mathcal{W}$-connected in $P$. Indeed, $(z, x)$ is clearly $\Delta \times \mathcal{W}$-connected to $(z, sj(z))$ in $P$ (where $\Delta$ is the trivial cover by singletons), and similarly, $(z', x')$ is $\Delta \times \mathcal{W}$-connected to $(z', sj(z'))$ in $P$. Since $fsj(z)$ and $fsj(z')$ are both in some element of $\mathcal{S}$, there is a $\Delta \times \mathcal{W}$ chain from $(z, sj(z))$ to $(z, sj(z'))$. Finally, $(z, sj(z'))$ and $(z', sj(z'))$ are $\mathcal{U} \times \Delta$-connected. Composing the chains, we obtain a proof of the claim. This shows that the inverse image of a $\mathcal{U}$-component of $Z$ is $\mathcal{U} \times \mathcal{W}$-connected. Since the ls-structure on $Z$ is generated by $\mathcal{U}$, this is enough to show that $e$ is in $\mathcal{E}_I$. 

$(\Leftarrow)$ We first claim that $f$ is coarsely surjective. In the diagram
$$ \xymatrix{
W \ar[r]^i \ar[d]_g & X \ar[d]^f \\
Z \ar[r]_j & Y\\
}
$$
let $Z$ be the underlying set of $Y$ equipped with the smallest ls-structure, i.e.~such that the only uniformly bounded families are families of singletons. Let $j$ be the identity set map and let $W$ be the empty ls-space with the empty maps $g$ and $i$. Then there is a diagram (\ref{factor}) with $e \in \mathcal{E}_I$. In particular, $e$ must be surjective, and it follows by commutativity of (\ref{factor}) that $f$ must be coarsely surjective. Now let $\mathcal{U}$ be a uniformly bounded cover of $Y$. For each element $U$ of $\mathcal{U}$, pick a point $s_U \in U$. Let $Z$ be the set of all such $s_U$ equipped with the smallest ls-structure, and let $j$ be the map induced by the inclusion of each point $s_U$. Clearly $Z$ is monogenic as an ls-space. Let $W$ be the set of all pairs $(x, U)$ such that $U \in \mathcal{U}$ and $x \in f^{-1}(U)$, let $i: W \rightarrow X$ be the map $(x, U) \mapsto x$ and $g: W \rightarrow Z$ the map $(x, U) \rightarrow s_U$. Put the largest ls-structure on $W$ for which $i$ and $g$ are ls-continuous. In particular, $\{(x, U), (y, V)\}$ is never bounded if $U \neq V$. Since $fi$ and $jg$ commute up to closeness (as witnessed by $\mathcal{U}$), we must have a diagram of the form of  (\ref{factor}) with $e \in \mathcal{E}_I$. Since $e$ is in $\mathcal{E}_I$, each $e^{-1}(s_U)$ must be $\mathcal{W}$-connected for some fixed uniformly bounded family $\mathcal{W}$ in $P$. Thus for every $U \in \mathcal{U}$, $f^{-1}(U) = i(g^{-1}(s_U))$ must be contained in $\st(c(e^{-1}(s_U)), \mathcal{T})$, a $\st(c(\mathcal{W}), \mathcal{T})$-connected subset of $X$, where $\mathcal{T}$ witnesses the closeness of $cd$ and $i$. Moreover, the family $f(\st(c(e^{-1}(s_U)), \mathcal{T}))$ is uniformly bounded because $je \sim fc$. Since $\mathcal{U}$ was arbitrary, $f$ is coarsely monotone as required.
\end{proof}

Recall from \cite{Roe} that an ls-space is monogenic if and only if it is coarsely equivalent to a geodesic $\infty$-metric space (that is, in which points which are finite distance apart are connected by a geodesic). Thus we have the following corollary of the above result.

\begin{Corollary}
Let $\mathcal{F}$ be the set of all (equivalence classes of) ls-continuous maps whose domain is a geodesic $\infty$-metric space. Then a map $f: X \rightarrow Y$ is coarsely monotone if and only if it is stably in $\mathcal{E}_I$ with respect to $\mathcal{F}$, where $I$ is the reflection onto ls-spaces of asymptotic dimension $0$.
\end{Corollary}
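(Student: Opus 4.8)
The plan is to read this off from Theorem~\ref{stabilize} by a purely formal ``change of test class'' argument. Write $\mathcal{F}$ for the class in the statement (ls-continuous maps, up to closeness, whose domain is a geodesic $\infty$-metric space) and $\mathcal{F}'$ for the class appearing in Theorem~\ref{stabilize} (those whose domain is monogenic). A geodesic $\infty$-metric space is in particular monogenic --- its metric ls-structure is generated by the cover by balls of radius $1$ --- so $\mathcal{F} \subseteq \mathcal{F}'$. Being stably in $\mathcal{E}_I$ with respect to a smaller class of test maps is a weaker requirement, so the implication ``$f$ coarsely monotone $\Rightarrow$ $f$ stably in $\mathcal{E}_I$ with respect to $\mathcal{F}$'' is immediate from Theorem~\ref{stabilize}.

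For the converse I would prove the sharper statement that $f$ stably in $\mathcal{E}_I$ with respect to $\mathcal{F}$ implies $f$ stably in $\mathcal{E}_I$ with respect to the a priori larger class $\mathcal{F}'$; the conclusion then follows from Theorem~\ref{stabilize}. So let $j: Z \to Y$ lie in $\mathcal{F}'$, i.e.\ $Z$ is monogenic, and use the characterisation of monogenic ls-spaces recalled above to fix a geodesic $\infty$-metric space $Z'$, a coarse equivalence $w: Z \to Z'$, and a coarse inverse $w': Z' \to Z$ (so $w'w \sim \mathrm{id}_Z$). Then $jw': Z' \to Y$ lies in $\mathcal{F}$ and $(jw')w \sim j$. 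Given a square
$$\xymatrix{ W \ar[r]^i \ar[d]_g & X \ar[d]^f \\ Z \ar[r]_j & Y }$$
commuting up to closeness, replacing $g$ by $wg$ and $j$ by $jw'$ still yields a square commuting up to closeness (since $fi \sim jg \sim (jw')(wg)$), so the hypothesis on $f$ provides
$$\xymatrix{ W \ar[rd]^d \ar@/_/[rdd]_{wg} \ar@/^/[rrd]^i \\ & P \ar[d]_e \ar[r]^c & X \ar[d]^f \\ & Z' \ar[r]_{jw'} & Y }$$
commuting up to closeness with $e \in \mathcal{E}_I$. The desired factorisation over $Z$ is then obtained by replacing $e$ with $w'e: P \to Z$: this lies in $\mathcal{E}_I$ because $\mathcal{E}_I$ contains the isomorphisms (in particular the coarse equivalence $w'$) and is closed under composition, and one checks directly that $(w'e)d \sim w'(wg) \sim g$, that $fc \sim (jw')e = j(w'e)$, and that $cd \sim i$. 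Since $j \in \mathcal{F}'$ was arbitrary, $f$ is stably in $\mathcal{E}_I$ with respect to $\mathcal{F}'$, hence coarsely monotone.

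I do not anticipate any real obstacle: all the substance is in Theorem~\ref{stabilize}, and what is left is the observation that condition $(\mathsf{P_2})$ depends on the test class only up to pre-composition by isomorphisms, together with the trivialities that $\mathcal{E}_I$ contains the isomorphisms and is closed under composition (automatic, since $\mathcal{E}_I$ is the class of morphisms inverted by the functor $I$). The one point needing a little care is the ``up to closeness'' bookkeeping in the transfer diagram above --- verifying that pre-composing with $w$ preserves commutativity up to closeness and that $w'e$ genuinely serves as the $\mathcal{E}_I$-map over $Z$. If one wanted to streamline, one could extract the general lemma: for classes $\mathcal{F}_1 \subseteq \mathcal{F}_2$ such that every member of $\mathcal{F}_2$ arises from a member of $\mathcal{F}_1$ by pre-composition with an isomorphism, a morphism is stably in $\mathcal{E}_I$ with respect to $\mathcal{F}_1$ iff it is stably in $\mathcal{E}_I$ with respect to $\mathcal{F}_2$; the corollary is then the instance $\mathcal{F}_1 = \mathcal{F}$, $\mathcal{F}_2 = \mathcal{F}'$, combined with Theorem~\ref{stabilize}.
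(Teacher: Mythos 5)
Your argument is correct and is essentially the paper's own route: the paper deduces this corollary immediately from Theorem~\ref{stabilize} together with Roe's fact that monogenic ls-spaces are exactly those coarsely equivalent to geodesic $\infty$-metric spaces, which is precisely your transfer-along-a-coarse-equivalence argument spelled out in detail. Your explicit verification that condition $(\mathsf{P_2})$ is invariant under pre-composing the test map with a coarse equivalence (using that $\mathcal{E}_I$ contains isomorphisms and is closed under composition) is the bookkeeping the paper leaves implicit, and it is sound.
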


A result which more closely resembles the topological situation (which involves pullbacks along maps from the singleton space) is as follows, where the singleton set is replaced by a disjoint union of singleton ls-spaces. 

\begin{Corollary}
Let $\mathcal{F}$ be the set of all (equivalence classes of) ls-continuous maps whose domain is a set with the trivial ls-structure (i.e.~the ls-structure consisting of families of singleton sets). Then a map $f: X \rightarrow Y$ is coarsely monotone if and only if it is stably in $\mathcal{E}_I$ with respect to $\mathcal{F}$, where $I$ is the reflection onto ls-spaces of asymptotic dimension $0$. 
\end{Corollary}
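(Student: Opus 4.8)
The plan is to deduce both implications almost immediately from Theorem~\ref{stabilize}, by observing that the class $\mathcal{F}$ appearing here sits inside the class of maps with monogenic domain used there, yet is still large enough to run the relevant half of that proof.

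For the forward implication, I would first note that a set $Z$ equipped with the trivial ls-structure is monogenic: its ls-structure is, by definition, generated by the single family consisting of all singleton subsets of $Z$. Hence the class $\mathcal{F}$ of the present statement is contained in the class of all ls-continuous maps with monogenic domain. Being stably in $\mathcal{E}_I$ with respect to a larger class of test maps is a logically stronger condition, so if $f$ is coarsely monotone then by Theorem~\ref{stabilize} it is stably in $\mathcal{E}_I$ with respect to all maps with monogenic domain, and a fortiori with respect to $\mathcal{F}$.

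For the converse I would revisit the proof of the $(\Leftarrow)$ implication of Theorem~\ref{stabilize} and observe that every test map $j\colon Z \to Y$ used there has domain $Z$ carrying the smallest (trivial) ls-structure. Concretely: in the step establishing coarse surjectivity of $f$, $Z$ is taken to be the underlying set of $Y$ with the smallest ls-structure; and in the step establishing coarse monotonicity of $f$, $Z$ is the set $\{s_U \mid U \in \mathcal{U}\}$ of chosen representatives, again equipped with the smallest ls-structure (the only property of $Z$ invoked there is that it is monogenic, and it is in fact trivial). Consequently, the hypothesis that $f$ is stably in $\mathcal{E}_I$ with respect to $\mathcal{F}$ already yields the factorizations of the form \eqref{factor} needed at each stage, and the argument of Theorem~\ref{stabilize}$(\Leftarrow)$ goes through verbatim to conclude that $f$ is coarsely monotone.

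The one point that genuinely requires care — and the only real "content" of the corollary — is the verification that the proof of Theorem~\ref{stabilize}$(\Leftarrow)$ never secretly needs a non-trivial monogenic domain; once that inspection is carried out there is nothing more to prove. It is perhaps cleanest to record this as an instance of a general principle: if $\mathcal{F}' \subseteq \mathcal{F}''$ are classes of morphisms such that the $(\Leftarrow)$ direction of Theorem~\ref{stabilize} can be run using only test maps in $\mathcal{F}'$ while the $(\Rightarrow)$ direction holds for all of $\mathcal{F}''$, then ``coarsely monotone'', ``stably in $\mathcal{E}_I$ with respect to $\mathcal{F}'$'' and ``stably in $\mathcal{E}_I$ with respect to $\mathcal{F}''$'' all coincide. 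This simultaneously covers the present corollary and the preceding one, since every set with the trivial ls-structure is itself a geodesic $\infty$-metric space (under the $\infty$-metric assigning distance $\infty$ to distinct points), hence lies in that corollary's class $\mathcal{F}$ as well.
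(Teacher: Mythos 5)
Your proposal is correct and follows essentially the same route as the paper: the forward direction by noting the trivial ls-structure is monogenic and specializing Theorem~\ref{stabilize}, and the converse by checking that the test maps used in the $(\Leftarrow)$ half of that theorem's proof all have trivial ls-structure domains. Your write-up merely spells out the inspection that the paper leaves implicit.
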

\begin{proof}
$(\Rightarrow)$: This follows from Theorem \ref{stabilize} and the fact that the trivial ls-structure is monogenic.

$(\Leftarrow)$: This follows from the proof of Theorem \ref{stabilize}.
\end{proof}

\begin{Remark}
One can easily show that a continuous map $f: A \rightarrow B$ between compact Hausdorff spaces is (classically) monotone if and only if for every open set $V \in B$, the restriction $f|_{f^{-1}(V)}$ induces a bijection between connected components. We can thus think of classical monotone maps as those that induce bijections on connected components ``on any fixed small scale'', i.e.~open neighbourhood. We can view Theorem \ref{stabilize} as saying that coarsely monotone maps are those that induce bijections between $\mathcal{U}$-components ``on any fixed large scale''.
\end{Remark}

\begin{Remark}
The absence of pullbacks in the coarse category presents significant obstacles to applying basic category theory arguments in coarse geometry. The above result shows that it can sometimes be useful to consider weaker notions along the lines of condition $(\mathsf{P_2})$ in the coarse category, which in categories that do admit pullbacks reduce to familiar notions.
\end{Remark}

\section{Pullback-stability of coarsely monotone maps}
Given any factorization system $(\mathcal{E}, \mathcal{M})$ on a category $\mathbb{C}$, it is always the case that the class $\mathcal{M}$ is stable under all pullbacks that exist in the category (that is, every pullback of an element of $\mathcal{M}$ is in $\mathcal{M}$). The same is not true in general for the class $\mathcal{E}$. Many important factorization systems do have this property, though, including the classical monotone-light factorization system. We now show that this also holds for the coarse monotone-light factorization system.

\begin{Proposition}
If the diagram below is a pullback in the coarse category and $f$ is coarsely monotone, then $g$ is also monotone.
$$
\xymatrix {
P \ar[d]_g \ar[r]^{j} & C \ar[d]^f \\
A \ar[r]_h & B \\
}
$$
\end{Proposition}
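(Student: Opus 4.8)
The plan is to deduce this from the characterisation of coarsely monotone maps in Theorem~\ref{stabilize}, using the universal property of the pullback to transport factorisations for $f$ back to factorisations for $g$. I would keep in mind throughout that, by Lemma~\ref{emb} and the remark following it, the canonical map $k = (g,j): P \to A \times C$ is a coarse embedding, hence a monomorphism in the coarse category.

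By Theorem~\ref{stabilize} it suffices to show that $g$ is stably in $\mathcal{E}_I$ with respect to the class $\mathcal{F}$ of ls-continuous maps whose domain is monogenic. So I would fix an ls-continuous $j': Z \to A$ with $Z$ monogenic together with a square
$$
\xymatrix{ W \ar[r]^{i'} \ar[d]_{g'} & P \ar[d]^g \\ Z \ar[r]_{j'} & A \\ }
$$
commuting up to closeness, and try to produce a factorisation of the form $(\mathsf{P_2})$ of Lemma~\ref{equipb} with the left-hand vertical map in $\mathcal{E}_I$. Pasting this square onto the pullback square (\ref{pb1}) and using $fj \sim hg$, the outer rectangle with top $ji': W \to C$, bottom $hj': Z \to B$ and verticals $g', f$ commutes up to closeness, and its lower-left object $Z$ is monogenic. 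Since $f$ is coarsely monotone, Theorem~\ref{stabilize} applied to $f$ then yields a diagram
$$
\xymatrix{ W \ar[rd]^{d} \ar@/_/[rdd]_{g'} \ar@/^/[rrd]^{ji'} \\ & P_1 \ar[d]_{e_1} \ar[r]^{c_1} & C \ar[d]^f \\ & Z \ar[r]_{hj'} & B \\ }
$$
commuting up to closeness, with $e_1 \in \mathcal{E}_I$; in particular $fc_1 \sim h(j'e_1)$.

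Thus $(j'e_1, c_1)$ is a cone over the pullback diagram (\ref{pb1}), so the universal property of $P$ supplies a unique-up-to-closeness $\ell: P_1 \to P$ with $g\ell \sim j'e_1$ and $j\ell \sim c_1$. I would then claim that the composite $W \xrightarrow{d} P_1 \xrightarrow{e_1} Z$ together with $P_1 \xrightarrow{\ell} P$ is the required factorisation: $e_1 \in \mathcal{E}_I$ and $e_1 d \sim g'$ come directly from Theorem~\ref{stabilize}, $g\ell \sim j'e_1$ is built into $\ell$, and $\ell d \sim i'$ because $g(\ell d) \sim j'e_1 d \sim j'g' \sim gi'$ while $j(\ell d) \sim c_1 d \sim ji'$, whence $k(\ell d) \sim k(i')$ and $k$ is monic (one could instead cite the uniqueness clause of the pullback property). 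Since $(\mathsf{P_2})$ now holds for $g$ against every test map in $\mathcal{F}$, Theorem~\ref{stabilize} gives that $g$ is coarsely monotone.

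The step I expect to demand the most care is the bookkeeping in the previous paragraph — checking that the various ``commutes up to closeness'' relations chain together, and especially that $\ell d \sim i'$, which is exactly where one needs $P$ to sit coarsely embedded in the product $A \times C$. The reason the argument has to route through Theorem~\ref{stabilize} rather than argue directly from Lemma~\ref{mono} is the obstacle stressed throughout the paper: the coarse category has no pullbacks, so the usual one-line proof that half of a factorization system can be pullback-stable is unavailable. A naive direct attempt would try to lift, for a uniformly bounded cover $\mathcal{U}$ of $A$, the coarse connectivity of the fibres of $f$ to those of $g$, but the fibres of $g$ sit inside a \emph{fixed} thickening $A \times_\mathcal{S} C$ of the fibre product and one cannot in general promote a chain in $C$ to one in $P$; passing through Theorem~\ref{stabilize} sidesteps this because the factorisation object $P_1$ carries its own, larger, thickening and the universal property then maps it into $P$.
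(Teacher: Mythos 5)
Your argument is correct, but it takes a genuinely different route from the paper's. The paper proves the proposition directly from the definition: using Lemma~\ref{emb} it replaces $P$ by a subspace of $A\times_{\mathcal S} C$ with $g,j$ the projections, establishes coarse surjectivity of $g$ by feeding an auxiliary space $R$ into the pullback's universal property, and then verifies the characterisation of Lemma~\ref{mono} by hand: it builds the thickened fibre product $Q=\{(a,c)\mid h(a)\mathcal{V}''f(c)\}$, obtains $k:Q\to P$ from the universal property, and exhibits explicit $\mathcal{V}\times\mathcal{W}$-chains whose images under $k$ connect the fibres of $g$. You instead make the pullback-stability a formal consequence of Theorem~\ref{stabilize}: paste the test square over $g$ (with monogenic $Z$) onto the pullback square, apply the $(\Rightarrow)$ direction of Theorem~\ref{stabilize} to $f$ against the test map $hj'$, map the resulting object $P_1$ into $P$ via the universal property, and recover $\ell d\sim i'$ from the uniqueness clause (or monicity of $k=(g,j)$, which is the easy direction of the remark after Lemma~\ref{emb}); your chaining of closeness relations checks out. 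This is shorter, avoids any explicit manipulation of covers, and makes transparent that stability under existing pullbacks is built into the stabilisation characterisation, in the spirit of \cite{CarJanKelPar}; the paper's proof is heavier on bookkeeping but is self-contained relative to Lemma~\ref{mono} and produces the relevant uniformly bounded families concretely. One small caveat in your last step: as defined in the paper, ``stably in $\mathcal{E}_I$ with respect to $\mathcal{F}$'' includes the requirement that $g$ itself lie in $\mathcal{E}_I$, and you only verify the $(\mathsf{P_2})$ conditions. This is harmless --- the paper's proof of the $(\Leftarrow)$ direction of Theorem~\ref{stabilize} uses only the $(\mathsf{P_2})$ conditions (and once $g$ is known to be coarsely monotone it is in $\mathcal{E}_I$ anyway) --- but if you want to quote the theorem verbatim you should say a word about why the membership clause is not an obstruction.
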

\begin{proof}
By the remarks in Section \ref{secPb}, we may assume that $P$ is a subspace of $A \times C$ with $g$ and $j$ the projections. Let $\mathcal{P}$ be the uniformly bounded family in $B$ which witnesses the closeness of $fj$ and $hg$, extended to a cover. We first show that $g$ is coarsely surjective. We know that $f$ is coarsely surjective, so suppose that $B \subseteq \st(\mathsf{Im}(f), \mathcal{R})$ for some uniformly bounded cover $\mathcal{R}$ of $B$. Consider the subspace of $A \times C$ 
$$
R = \{(a, c) \mid a \in A,\ c \in C,\ h(a)\mathcal{R}f(c) \}
$$
and the obvious projections $\rho_1$ and $\rho_2$ to $A$ and $C$ respectively. Note that $\rho_1$ is surjective. By the property of the pullback, there is a map $l: R \rightarrow P$ such that $gl \sim \rho_1$. Since $\rho_1$ is surjective, it follows that $gl$ is coarsely surjective, and consequently that $g$ is coarsely surjective as required. 

Now, let $\mathcal{V}$ be a uniformly bounded family in $A$, and consider the uniformly bounded family $h(\mathcal{V})$ in $B$. Let $\mathcal{V}' = \st(h(\mathcal{V}), \mathcal{P})$. Since $f$ is coarsely monotone, there is a uniformly bounded cover $\mathcal{V}''$ of $B$, which we may suppose to be coarser than $\mathcal{V}'$, and a uniformly bounded cover $\mathcal{W}$ in $C$ such that each element of $f^{-1}(\mathcal{V}')$ is contained in a $\mathcal{W}$-component of an element of $f^{-1}(\mathcal{V}'')$. Let $Q$ be the subspace
$$
\{(a, c) \mid a \in A, c \in C, h(a)\mathcal{V}''f(c) \}
$$
and let $\pi_1: Q \rightarrow A$, $\pi_2: Q \rightarrow C$ be the obvious projections. Since $h \pi_1$ is close to $f\pi_2$, by the property of the pullback, there must be an ls-continuous map $k: Q \rightarrow P$ such that $gk \sim \pi_1$ and $jk \sim \pi_2$. Since a map which is close to an ls-continuous map is ls-continuous, we may suppose that $k$ is the identity on those elements which are also in $P$. Consider the uniformly bounded cover 

$$\mathcal{V} \times \mathcal{W} = \{V \times W \mid V \in \mathcal{V},\ W \in \mathcal{W} \}$$
of $A \times C$ restricted to $Q$, and let $\mathcal{T}$ be its image under $k$. We claim that $g^{-1}(\mathcal{V})$ refines the set of $\mathcal{T}$-components of $g^{-1}(\st(\mathcal{V}, \mathcal{X}))$, where $\mathcal{X}$ is the cover of $A$ which witnesses the closeness of $kg$ and $\pi_1$. Indeed, suppose $(a,c)$ and $(a',c')$ are elements of $P$ such that $\{a, a'\} \subseteq V \in \mathcal{V}$. Then there is a $V' \in \mathcal{V}'$ containing all of $f(c)$, $f(c')$, $h(a)$ and $h(a')$, and a $V'' \in \mathcal{V}''$ containing $V'$ such that $c$ and $c'$ are $\mathcal{W}$-connected inside $f^{-1}(V'')$. Consider the chain
$$
\xymatrix{
(a, c) \ar@{-}[r] & (a', c) \ar@{-}[r] & (a', c_1) \ar@{-}[r] & \cdots \ar@{-}[r] & (a', c') \\
}
$$
where $c, c_1, \ldots, c'$ is a chain of $\mathcal{W}$-related elements in $f^{-1}(V'')$. First note that every element in this chain is in $Q$, and in particular, in $\pi_1^{-1}(V) \subseteq Q$. Moreover, each pair of consecutive elements is related by $\mathcal{V} \times \mathcal{W}$. Taking the image of the chain under $k$, it follows that $(a,c)$ and $(a', c')$ are connected by a $\mathcal{T}$-chain inside an element of $g^{-1}(\st(\mathcal{V}, \mathcal{X}))$. 
\end{proof}

\section{Maps extended to the Higson corona} \label{secCorona}
We will now deal with the special case of a map $f: X \rightarrow Y$ where $X$ and $Y$ are proper metric spaces (with the induced ls-structures) and $f$ is a coarse map (i.e.~an ls-continuous map such that the inverse image of a bounded set in $Y$ is bounded in $X$). Such a map induces a continuous map $\nu f: \nu X \rightarrow \nu Y$ between the \textbf{Higson coronas} of $X$ and $Y$. We briefly recall the relevant definitions from \cite{Roe}.

\begin{Definition}
Let $X$ be a metric space, and $g: X \rightarrow \mathbb{C}$ a bounded function to the complex numbers. Then $g$ is said to be \textbf{slowly oscillating} if for every $R > 0$ and $\varepsilon > 0$ there is a bounded set $B$ in $X$ such that $d(x, x') \leq R \Rightarrow |g(x) - g(x')| \leq \varepsilon$ for $x,x' \in X \setminus B$.
\end{Definition}

The \textbf{Higson compactification} $hX$ of a proper metric space $X$ is the compactification of $X$ characterised by the fact that a bounded continuous complex-valued function $g: X \rightarrow \mathbb{C}$ extends continuously to $\tilde{g}: hX \rightarrow \mathbb{C}$ if and only if it is slowly oscillating. In particular, $X$ is dense in $hX$. The complement $\nu X = hX \setminus X$ is called the \textbf{Higson corona} of $X$. 

In terms of $C^\ast$ algebras, $hX$ is the compact Hausdorff space which corresponds (under Gelfand duality) to the algebra $C_h(X)$, while $\nu X$ corresponds to $C_h(X)/C_0(X)$, where $C_h(X)$ is the algebra of continuous bounded slowly-oscillating complex-valued functions on $X$ and $C_0(X)$ is the ideal of $C_h(X)$ consisting of those functions which tend to zero at infinity. Let $B_h(X)$ be the algebra of (not necessarily continuous) bounded slowly oscillating complex-valued functions on $X$ and $B_0(X)$ be the ideal of $B_h(X)$ consisting of those functions which tend to zero at infinity. Recall from \cite{Roe} that $C_h(X)/C_0(X)$ is canonically isomorphic to $B_h(X)/B_0(X)$ via the map induced by the inclusion $C_h(X) \rightarrow B_h(X)$. Thus the corona can be defined in a way which is independent of the topology of $X$, which is expected since the corona captures large-scale behaviour of $X$. 

Given a coarse map $f: X \rightarrow Y$ between metric spaces, the map $[g] \mapsto [gf]$ defines a $\ast$-homomorphism $f^\ast: B_h(Y)/B_0(Y) \rightarrow B_h(X)/B_0(X)$, which corresponds under Gelfand duality to a continuous map $\nu f: \nu X \rightarrow \nu Y$. This defines a functor $\nu$ from the category of proper metric spaces and closeness classes of coarse maps to the category of compact Hausdorff spaces~\cite{Roe}. In particular, $\nu f$ is a homeomorphism whenever $f$ is a coarse equivalence.

\begin{Lemma}
Let $f: X \rightarrow Y$ be a coarse map between proper metric spaces and $\nu f: \nu X \rightarrow \nu Y$ the induced map. If $f$ is continuous, then $\nu f$ is (up to homeomorphism) the restriction to $\nu X$ of the unique continuous extension $hf: hX \rightarrow hY$ of $f$. 
\end{Lemma}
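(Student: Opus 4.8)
The plan is to construct $hf$ directly from the pullback of slowly oscillating functions, show that it carries $\nu X$ into $\nu Y$, and then check that the resulting boundary map is exactly the one which is Gelfand-dual to $f^{*}$. First I would verify that, because $f$ is continuous and coarse (hence proper, in the terminology of this section), the assignment $g\mapsto g\circ f$ defines a unital $*$-homomorphism $f^{\sharp}\colon C_h(Y)\to C_h(X)$: continuity and boundedness of $g\circ f$ are clear, and for slow oscillation, given $R,\varepsilon>0$ one uses ls-continuity of $f$ between metric spaces to get $S>0$ with $d(x,x')\le R\Rightarrow d(f(x),f(x'))\le S$, takes a bounded $B\subseteq Y$ witnessing slow oscillation of $g$ at scale $S$, and observes that $f^{-1}(B)$ is bounded by properness and witnesses slow oscillation of $g\circ f$ at scale $R$. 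Under Gelfand duality (with $C(hX)\cong C_h(X)$ by restriction, and likewise for $Y$), $f^{\sharp}$ is dual to a continuous map $hf\colon hX\to hY$; evaluating the identity $\widetilde{g\circ f}=\tilde g\circ hf$ at $x\in X$ gives $\tilde g(f(x))=\tilde g(hf(x))$ for all $g\in C_h(Y)$, and since $C(hY)$ separates points of $hY$ we conclude $hf|_X=f$. Density of $X$ in $hX$ together with $hY$ Hausdorff makes this extension unique.

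Next I would show $hf(\nu X)\subseteq\nu Y$, which is where properness of $f$ is essential. Fix $\omega\in\nu X$ and a net $(x_\alpha)$ in $X$ with $x_\alpha\to\omega$ in $hX$. If some subnet of $(x_\alpha)$ remained in a bounded set $B\subseteq X$, then $\overline B$ is compact (the metric is proper), hence closed in $hX$, forcing $\omega\in X$, a contradiction; so $(x_\alpha)$ eventually leaves every bounded subset of $X$. By properness of $f$, $(f(x_\alpha))$ eventually leaves every bounded subset of $Y$, so its limit $hf(\omega)$ cannot lie in $Y$, as points of $Y$ have compact (hence $hY$-closed) bounded neighbourhoods. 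Thus $hf(\omega)\in\nu Y$, and $hf$ restricts to a continuous map $\nu X\to\nu Y$.

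Finally I would identify $hf|_{\nu X}$ with $\nu f$ through Gelfand duality. The restriction $C(hX)\to C(\nu X)$ is surjective with kernel $\{G:G|_{\nu X}=0\}$, which under $C(hX)\cong C_h(X)$ is exactly $C_0(X)$ (a continuous bounded slowly oscillating function vanishes on $\nu X$ iff it tends to $0$ at infinity, which is the description of $\nu X$ recalled from \cite{Roe}); hence $C(\nu X)\cong C_h(X)/C_0(X)\cong B_h(X)/B_0(X)$, the second isomorphism being the canonical one induced by inclusion, and similarly for $Y$. Tracing a function through these identifications, the $*$-homomorphism $(hf|_{\nu X})^{*}\colon C(\nu Y)\to C(\nu X)$ becomes $[g]\mapsto[\widetilde{g\circ f}]=[g\circ f]$, which is precisely the formula defining $f^{*}\colon B_h(Y)/B_0(Y)\to B_h(X)/B_0(X)$ whose Gelfand dual is, by definition, $\nu f$. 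Since Gelfand duality is a contravariant equivalence, $hf|_{\nu X}$ and $\nu f$ coincide up to the homeomorphisms identifying $\nu X$ and $\nu Y$ with the Gelfand spectra of $B_h(X)/B_0(X)$ and $B_h(Y)/B_0(Y)$, which is the assertion.

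I expect this last step to be the main obstacle: one must verify carefully that the kernel of the restriction map is $C_0(X)$ and that all the Gelfand-duality identifications are mutually compatible, so that the boundary map induced by $hf$ is genuinely given by the same formula as $f^{*}$. Steps one and two are routine, with properness of $f$ (built into the definition of a coarse map used here) being the essential hypothesis in step two and continuity of $f$ the essential hypothesis ensuring $g\circ f\in C_h(X)$ in step one.
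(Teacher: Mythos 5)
Your proposal is correct and follows essentially the same route as the paper: the paper's proof consists precisely of checking that the square relating $C_h(Y)/C_0(Y)\to C_h(X)/C_0(X)$ (precomposition with $f$, which is Gelfand-dual to $hf|_{\nu X}$) and $f^\ast\colon B_h(Y)/B_0(Y)\to B_h(X)/B_0(X)$ commutes under the canonical isomorphisms, which is exactly your third step. Your first two steps (that $g\mapsto g\circ f$ lands in $C_h(X)$, hence $hf$ exists, and that properness forces $hf(\nu X)\subseteq\nu Y$) simply spell out details the paper treats as implicit in the statement and labels ``easy,'' so there is no substantive difference in method.
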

\begin{proof}
One only needs to check that the diagram 
$$ \xymatrix{
C_h(Y)/C_0(Y) \ar[d]_{\cong} \ar[rr]^{[g] \mapsto [gf]} & & C_h(X)/C_0(X) \ar[d]^{\cong} \\
B_h(Y)/B_0(Y) \ar[rr]^{ f^\ast} & & B_h(X)/B_0(X) \\
}
$$
commutes, which is easy.
\end{proof}

We will need the following lemma, which is taken from \cite{DM}.

\begin{Lemma} \label{ext}
Any slowly oscillating function from a subset $A$ of a proper metric space $X$ to $[0,1]$ extends to a slowly oscillating function on the whole of $X$ to $[0,1]$.
\end{Lemma}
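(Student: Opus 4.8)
The plan is to write down an explicit formula for the extension and then verify slow oscillation by hand. Assume $A \neq \emptyset$, write $d(x,A) = \inf_{a \in A} d(x,a)$, and define $\tilde g : X \to [0,1]$ by $\tilde g(x) = g(x)$ for $x \in A$ and
\[
\tilde g(x) \;=\; \inf_{a \in A}\left( g(a) + \frac{d(x,a) - d(x,A)}{1 + d(x,A)} \right) \qquad \text{for } x \notin A .
\]
Each summand is non-negative, and letting $d(x,a) \to d(x,A)$ shows the infimum is at most $1$, so $\tilde g$ does take values in $[0,1]$; by construction $\tilde g|_A = g$. These are the routine verifications; the content of the lemma is that $\tilde g$ is slowly oscillating.

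Fix $R, \varepsilon > 0$. The heart of the argument, and the reason for the denominator $1 + d(x,A)$, is the estimate for points far from $A$: if $x, x' \notin A$, $d(x,x') \le R$ and $d(x,A) > 2R$, then picking $a \in A$ nearly attaining the infimum defining $\tilde g(x)$, using the same $a$ as a competitor in the infimum for $\tilde g(x')$, and combining $d(x',a) \le d(x,a) + R$, $d(x',A) \ge d(x,A) - R$ with the crude inequality $d(x,a) - d(x,A) \le 2\bigl(1 + d(x,A)\bigr)$ (which follows from $\tilde g(x) \le 1$), one obtains
\[
\bigl|\tilde g(x) - \tilde g(x')\bigr| \;\le\; \frac{4R}{1 + d(x,A) - 2R} .
\]
In particular $|\tilde g(x) - \tilde g(x')| \le \varepsilon$ as soon as $d(x,A)$ exceeds a threshold $M$ depending only on $R$ and $\varepsilon$. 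Note that a naive ``value at a nearest point of $A$'' extension fails here — two near-nearest points for $x$ and $x'$ can be roughly $4\,d(x,A)$ apart, which slow oscillation of $g$ cannot control — so the normalisation is genuinely needed.

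It then remains to treat pairs $x, x'$ with $d(x,x') \le R$ and $d(x,A) \le M$ (hence $d(x',A) \le M + R$). Here I would fix near-nearest points $a$ of $x$ and $a'$ of $x'$ in $A$; the bound $\tilde g(x) \le 1$ again forces every near-minimiser in the infimum defining $\tilde g(x)$ to lie within a distance depending only on $M$ (hence only on $R, \varepsilon$) of $x$, and likewise for $x'$, so all these points, together with $a$ and $a'$, lie within a fixed radius $R'$ of one another. Feeding slow oscillation of $g$ at scale $R'$ into the two estimates $\tilde g(x) \le g(a) + (\text{small})$ and $\tilde g(x) \ge g(b) + (\text{nonneg}) - (\text{small})$ for a near-minimiser $b$ yields $|\tilde g(x) - g(a)| \le \varepsilon$, $|\tilde g(x') - g(a')| \le \varepsilon$, and $|g(a) - g(a')| \le \varepsilon$ provided $x$ — and hence all of $a, a', b$ — lies outside a suitable bounded subset $B$ of $X$; the subcases where $x$ or $x'$ already lies in $A$ are covered by taking that point itself as its near-nearest point. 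Combining the two regimes, $|\tilde g(x) - \tilde g(x')| \le 5\varepsilon$ whenever $x, x' \in X \setminus B$ and $d(x,x') \le R$, and since $\varepsilon$ was arbitrary $\tilde g$ is slowly oscillating.

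I expect the main obstacle to be conceptual rather than computational: guessing the right extension. Once the damping factor $1 + d(x,A)$ is in place, both regimes reduce to bookkeeping with the inequality $\tilde g \le 1$ and the definition of slow oscillation of $g$; in particular neither properness of $X$ nor any regularity of $A$ is actually used, and the case of bounded $A$ is automatically subsumed, since then every point far from a basepoint is also far from $A$ and lands in the first regime.
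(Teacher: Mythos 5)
There is no in-paper proof to compare against: the paper imports Lemma \ref{ext} verbatim from \cite{DM} and never proves it, so your proposal is necessarily a different route --- it is a self-contained explicit construction, and it checks out. The damped-infimum extension $\tilde{g}(x)=\inf_{a\in A}\bigl(g(a)+\frac{d(x,a)-d(x,A)}{1+d(x,A)}\bigr)$ works for exactly the reasons you indicate: in the far regime the competitor argument plus the inequality $d(x,a)-d(x,A)\le 2\bigl(1+d(x,A)\bigr)$ --- which, to be precise, holds for \emph{near-minimisers} (using $g\ge 0$ and $\tilde{g}(x)\le 1$), not for all $a$, which is how you in fact use it --- gives a bound of the form $CR/\bigl(1+d(x,A)-cR\bigr)$ with no appeal to slow oscillation of $g$ at all, and the symmetric estimate only changes the denominator to $1+d(x',A)-R\ge 1+d(x,A)-2R$, so your displayed inequality stands; in the near regime $d(x,A)\le M$ the same bound $\tilde{g}\le 1$ confines all near-minimisers to a ball of radius roughly $3M+2$ about $x$, so taking the bounded set $B_{0}$ furnished by slow oscillation of $g$ at scale $R^{*}\approx 4(M+R)$ and letting $B$ be the $(R^{*}+R)$-neighbourhood of $B_{0}$ (still bounded, no properness needed) makes the chain $\tilde{g}(x)\approx g(a)$, $g(a)\approx g(a')$, $g(a')\approx\tilde{g}(x')$ go through, including the subcases $x\in A$ or $x'\in A$ and the case $d(x,A)=0$ with $x\notin A$ (harmless, since the lemma and its uses in Section \ref{secCorona} only require an element of $B_{h}(X)$, not a continuous extension). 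Your closing remarks are also accurate: properness of $X$ and any regularity of $A$ are irrelevant here, and bounded $A$ is absorbed automatically because the chosen $B$ then swallows the whole near regime. What your approach buys is a short, formula-driven proof that keeps the paper self-contained instead of deferring to the extension theory of \cite{DM}; the only polish needed is to state explicitly that the crude bound is applied to near-minimisers and to record the choice $M\approx R/\varepsilon$ and the enlargement of $B_{0}$ by a radius depending only on $R$ and $\varepsilon$.
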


\begin{Corollary} \label{extcor}
Any bounded slowly oscillating function $f$ from a subset $A$ of a proper metric space $X$ to $\mathbb{C}$ extends to a bounded slowly oscillating function on the whole of $X$ to $\mathbb{C}$.
\end{Corollary}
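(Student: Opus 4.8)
The plan is to reduce the statement to Lemma~\ref{ext} by decomposing $f$ into its real and imaginary parts and affinely rescaling each one into the interval $[0,1]$. First I would write $f = u + iv$ with $u = \operatorname{Re} f$ and $v = \operatorname{Im} f$, and note that $u$ and $v$ are bounded and slowly oscillating: since $|u(x) - u(x')| \le |f(x) - f(x')|$ and likewise for $v$, any bounded set $B$ witnessing the slow oscillation of $f$ for given $R$ and $\varepsilon$ also witnesses it for $u$ and for $v$.

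Next I would treat $u$ (the argument for $v$ being identical). Pick $M > 0$ with $u(A) \subseteq [-M, M]$; if no positive such $M$ exists then $u \equiv 0$ and there is nothing to do. Set $\tilde u = (u + M)/(2M) \colon A \to [0,1]$. This is again slowly oscillating, because $|\tilde u(x) - \tilde u(x')| = |u(x) - u(x')|/(2M)$, so a bounded set witnessing slow oscillation of $u$ for $R$ and $2M\varepsilon$ witnesses it for $\tilde u$ for $R$ and $\varepsilon$. By Lemma~\ref{ext}, $\tilde u$ extends to a slowly oscillating $\bar u \colon X \to [0,1]$. Then $\hat u := 2M\bar u - M \colon X \to [-M, M]$ is slowly oscillating by the same rescaling argument, and $\hat u|_A = u$.

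Finally I would set $\hat f = \hat u + i\hat v$, where $\hat v$ is obtained from $v$ in the same way. Then $\hat f \colon X \to \mathbb{C}$ is bounded, restricts to $f$ on $A$, and is slowly oscillating: given $R, \varepsilon > 0$, choosing bounded sets $B_1, B_2$ that witness slow oscillation of $\hat u$ and $\hat v$ respectively for $R$ and $\varepsilon/2$, the set $B_1 \cup B_2$ witnesses slow oscillation of $\hat f$ for $R$ and $\varepsilon$ via the triangle inequality $|\hat f(x) - \hat f(x')| \le |\hat u(x) - \hat u(x')| + |\hat v(x) - \hat v(x')|$.

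I do not expect a genuine obstacle here: the substance is entirely contained in Lemma~\ref{ext}, and the remaining steps only require checking that passing to real and imaginary parts, applying affine rescalings, and forming finite sums all preserve slow oscillation, together with dispatching the degenerate case of an identically zero real or imaginary part.
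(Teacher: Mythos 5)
Your proof is correct and follows essentially the same route as the paper: the paper also reduces to Lemma~\ref{ext} by rescaling and translating so that the image lies in $[0,1]\times[0,1]$, extending the two coordinate projections separately, and recombining. The only cosmetic difference is that you rescale each of the real and imaginary parts individually rather than normalizing $f$ as a whole, which changes nothing of substance.
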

\begin{proof}
Rescaling by a constant and translating preserves slowly oscillating functions, so we may assume that $f$ has image $[0,1] \times [0,1]$. The projections $\pi_1 f$ and $\pi_2 f$ are slowly oscillating functions from $A$ to $[0,1]$, so by Lemma \ref{ext} we can extend each of them to slowly oscillating functions on $X$. Taking the induced map from $X$ to $[0,1] \times [0,1]$ we obtain the required extension.
\end{proof}

\begin{Proposition} \label{coronaeq}
Let $f: X \rightarrow Y$ be an ls-continuous map and $\nu f: \nu X \rightarrow \nu Y$ the induced continuous map between Higson coronas. Then
\begin{itemize}
\item[(1)] $\nu f$ is injective if and only if $f$ is a coarse embedding;
\item[(2)] $\nu f$ is surjective if and only if $f$ is coarsely surjective;
\item[(3)] $\nu f$ is a homeomorphism if and only if $f$ is a coarse equivalence.
\end{itemize}
\end{Proposition}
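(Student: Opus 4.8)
The plan is to phrase all three statements in terms of the induced $\ast$-homomorphism $f^\ast\colon B_h(Y)/B_0(Y)\to B_h(X)/B_0(X)$, using the Gelfand-duality dictionary: for a continuous map $\psi$ of compact Hausdorff spaces, $\psi$ is surjective iff $\psi^\#$ is injective, and $\psi$ is injective iff $\psi^\#$ is surjective. Applied to $\nu f$ (which corresponds, via the isomorphism $C_h/C_0\cong B_h/B_0$, to $f^\ast$) this says $\nu f$ is surjective iff $f^\ast$ is injective, and $\nu f$ is injective iff $f^\ast$ is surjective. Then (3) is a formality: a continuous bijection between compact Hausdorff spaces is a homeomorphism, so $\nu f$ is a homeomorphism iff it is both injective and surjective, which by (1) and (2) happens iff $f$ is both a coarse embedding and coarsely surjective, i.e.\ (by the characterisation recalled in the preliminaries) iff $f$ is a coarse equivalence. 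So the content is in (1) and (2). Throughout I use that $f$, being coarse, is proper and sends bounded sets to bounded sets (an ls-continuous map of metric spaces sends a set of bounded diameter to a set of bounded diameter), and the metric reformulations: $f$ coarsely surjective means there is $R$ with $d(y,f(X))\le R$ for all $y\in Y$; $f$ a coarse embedding means for every $R$ there is $S$ with $d(f(x),f(x'))\le R\Rightarrow d(x,x')\le S$.

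For (2), the $(\Leftarrow)$ direction is a short estimate: given $[g]$ with $gf\in B_0(X)$ and $R$ as above, choose a bounded $B_1\subseteq Y$ controlling $g$ at scale $R$ and a bounded $B_2\subseteq X$ with $|g(f(x))|\le\varepsilon$ off $B_2$; then off the bounded set $N_R(B_1)\cup N_R(f(B_2))$ every $y$ is within $R$ of some $f(x)$ with $f(x)\notin B_1\cup f(B_2)$ (hence $x\notin B_2$), giving $|g(y)|\le 2\varepsilon$, so $g\in B_0(Y)$ and $f^\ast$ is injective. For $(\Rightarrow)$ I argue the contrapositive: if $f$ is not coarsely surjective, pick $y_n$ with $d(y_n,f(X))\to\infty$; these escape to infinity, and after passing to a subsequence $d(y_n,y_m)\to\infty$ for $n\ne m$. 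Dropping finitely many so that $d(y_n,f(X))\ge 1$, set $g(y)=\sup_n\max\bigl(0,\,1-4\,d(y,y_n)/(1+d(y_n,f(X)))\bigr)$. This is bounded, vanishes on $f(X)$, is slowly oscillating (each bump is Lipschitz with small constant, and all but finitely many bumps have support disjoint from any fixed bounded set), and has $g(y_n)=1$; so $[g]\ne 0$ while $f^\ast[g]=[gf]=0$, i.e.\ $f^\ast$ is not injective.

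For (1), the $(\Leftarrow)$ direction uses a choice function $y\mapsto x_y\in f^{-1}(y)$ on $f(X)$: given $[h]\in B_h(X)/B_0(X)$, put $g(y)=h(x_y)$ on $f(X)$. Because $f$ is a coarse embedding, $d(f(x_y),f(x_{y'}))\le R$ forces $d(x_y,x_{y'})\le S(R)$, so slow oscillation of $h$ (together with $f$ preserving boundedness) makes $g$ bounded and slowly oscillating on $f(X)$; extend it to all of $Y$ by Corollary~\ref{extcor}. The same estimate with scale $0$ shows $|g(f(x))-h(x)|\to 0$ at infinity, so $[gf]=[h]$ and $f^\ast$ is surjective. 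For $(\Rightarrow)$, contrapositive again: if $f$ is not a coarse embedding, fix $R$ and pick $x_n,x_n'$ with $d(f(x_n),f(x_n'))\le R$ and $d(x_n,x_n')>n$. Properness rules out either sequence staying bounded (a bounded sequence on one side would, since $f$ preserves boundedness, force the other side into a bounded set of $X$, contradicting $d(x_n,x_n')\to\infty$), so both escape to infinity, and after passing to a subsequence the points $\{x_n\}\cup\{x_n'\}$ are pairwise far apart. Build $h\in B_h(X)$ by the same sup-of-bumps recipe with $h(x_n)=0$ and $h(x_n')=1$. If $[h]=f^\ast[g]$, then $gf-h\in B_0(X)$, so $g(f(x_n))\to 0$ and $g(f(x_n'))\to 1$; but $f(x_n),f(x_n')\to\infty$ in $Y$ (properness once more) while $d(f(x_n),f(x_n'))\le R$, contradicting slow oscillation of $g$. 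Hence $f^\ast$ is not surjective.

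The main obstacle is the two $(\Rightarrow)$ directions, and it is twofold. First, one must manufacture the separating slowly oscillating functions and actually verify slow oscillation; this is where the ``supremum of bump functions with shrinking slopes and eventually disjoint supports'' device does the work, and where Corollary~\ref{extcor} is indispensable for upgrading a function defined only on $f(X)$ to one on $Y$ in (1). Second, one must reduce to sequences of points that genuinely tend to infinity on both sides, which is precisely where properness of $f$ (and the fact that $f$ preserves boundedness) is used to exclude the degenerate ``one sequence bounded'' configuration. Once these two points are handled, the $C^\ast$-algebra bookkeeping is routine.
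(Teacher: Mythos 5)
Your proposal is correct and follows essentially the same route as the paper: translate all three statements through Gelfand duality into statements about $f^\ast$ on $B_h/B_0$, use the extension results (Lemma \ref{ext}, Corollary \ref{extcor}) for the positive directions, and for the two negative directions manufacture separating slowly oscillating functions from sequences of points escaping to infinity, with properness (plus ls-continuity) ruling out the ``one side bounded'' configuration. The differences are only cosmetic -- you build the slowly oscillating functions by explicit bumps and use a choice of section of $f$ over its image where the paper factors $f$ through the inclusion of its image and quotes Corollary \ref{extcor}; and in (2) your arranging that $g$ vanish on $f(X)$ is if anything slightly more careful than the paper's write-up, which extends a function defined only on the points $y_n$.
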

\begin{proof}
(1): Suppose $\nu f$ is injective; under duality, this is the same as to say that $f^\ast: B_h(Y)/B_0(Y) \rightarrow B_h(X)/B_0(X)$ is surjective. Suppose that $f$ is not a coarse embedding. Pick a sequence of points $(a_n, b_n)$ in $X$ such that $d(a_n, b_n)$ tends to infinity, but $d(f(a_n), f(b_n))$ is bounded. Since $f$ is proper, $a_n$ and $b_n$ cannot be bounded, so we may choose the $a_n$ and $b_n$ such that each $a_i$ (resp. $b_i$) is at least $i$ away from all the $b_j$ (resp. $a_j$) for $j < i$. Define a map on the union of the $a_n$ and the $b_n$ to $[0,1]$ which sends every $a_n$ to $0$ and every $b_n$ to $1$. This map is slowly oscillating, so we can extend it to a slowly-oscillating map on the whole of $X$. However, this map cannot be written as $gf + b$ for $g \in B_h(Y)$ and $b \in B_0(X)$, which contradicts the surjectivity of $f^\ast: B_h(Y)/B_0(Y) \rightarrow B_h(X)/B_0(X)$. Now suppose that $f$ is a coarse embedding. Then $f$ is, up to coarse equivalence, the inclusion of its image into $Y$. By Lemma \ref{extcor}, such inclusions give rise to surjective maps $ B_h(Y)/B_0(Y) \rightarrow B_h(X)/B_0(X)$, which gives the required result.

(2): Suppose $\nu f$ is surjective, i.e.~that $f^\ast: B_h(Y)/B_0(Y) \rightarrow B_h(X)/B_0(X)$ is injective, and that $f$ is not coarsely surjective. For every $n$, pick a point $y_n$ in $Y$ such that $d(f(X), Y) \geq n$. Define a function on $\{y_n\}$ to $\mathbb{C}$ which sends every $y_n$ to $1$. This map is clearly slowly oscillating, so it extends by Lemma \ref{ext} to a slowly oscillating function $g: Y \rightarrow \mathbb{C}$. Then $g$ is not in $B_0(Y)$ but $gf$ is in $B_0(X)$, contradicting the fact that $f^\ast: B_h(Y)/B_0(Y) \rightarrow B_h(X)/B_0(X)$ is injective. The other direction is to say that if $f$ is coarsely surjective, and $gf$ is in $B_0(X)$, then $g$ is in $B_0(Y)$, which is easy to check.

(3) is a consequence of (1) and (2).
\end{proof}

If we are given a coarse map $f: X \rightarrow Y$ between proper metric spaces, then we can consider a $1$-net $X_1$ in $X$ (i.e.~a maximal $1$-separated subset of $X$). The space $X_1$ is a proper metric space, the inclusion $i: X_1 \rightarrow X$ is a coarse equivalence, and the composite $fi$ is a continuous coarse map. As a result, the induced map $\nu (fi): X_1 \rightarrow Y$ is the unique extension of $fi$ to $hX_1 \rightarrow hY$ restricted to $\nu X_1$. But $\nu i$ is a homeomorphism, so $\nu f$ and $\nu(fi)$ are the same up to homeomorphism. Thus in the rest of the section we will often assume that a given coarse map $f$ is actually continuous, and that the map $\nu f$ is its extension restricted to the corona. The following lemma is a special case of Proposition 2.3 from \cite{Dranetal}.

\begin{Lemma} \label{int}
For $A$ and $B$ two subsets of a proper metric space $X$, $\overline{A} \cap \overline{B} \cap \nu X$ is non-empty if and only if there is a $S > 0$ such that $B(A, S) \cap B(B, S)$ is unbounded.
\end{Lemma}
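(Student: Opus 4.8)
The plan is to prove the two implications separately, taking all closures in the Higson compactification $hX$ and using throughout that $hX$ is compact Hausdorff, that $X$ is open and dense in $hX$, that the bounded slowly oscillating functions $C_h(X)$ are exactly $C(hX)$ and hence separate the points of $hX$, and that the closure in $hX$ of a bounded subset of $X$ is contained in $X$ — so that deleting a bounded set from a subset of $X$ changes neither its closure in $\nu X$.

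For the implication $(\Leftarrow)$, suppose $B(A,S)\cap B(B,S)$ is unbounded for some $S>0$. Fix a basepoint $x_0$; since $X$ is proper I may pick $x_n\in B(A,S)\cap B(B,S)$ with $d(x_0,x_n)\to\infty$, then $a_n\in A$ and $b_n\in B$ with $d(a_n,x_n)\le S$ and $d(b_n,x_n)\le S$, so that $d(a_n,b_n)\le 2S$ while $d(x_0,a_n),d(x_0,b_n)\to\infty$. Fix a non-principal ultrafilter $\mathcal U$ on $\NN$ and set $\omega=\lim_{\mathcal U}a_n$ and $\omega'=\lim_{\mathcal U}b_n$ in the compact space $hX$. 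Because $(a_n)$ and $(b_n)$ leave every bounded set, $\omega,\omega'\in\nu X$, and by construction $\omega\in\overline A$ and $\omega'\in\overline B$. I then claim $\omega=\omega'$: for any $g\in C_h(X)$ with continuous extension $\tilde g$ to $hX$, slow oscillation with $R=2S$ gives $|g(a_n)-g(b_n)|\to 0$, hence $\tilde g(\omega)=\lim_{\mathcal U}g(a_n)=\lim_{\mathcal U}g(b_n)=\tilde g(\omega')$; since $C_h(X)$ separates points of $hX$ this forces $\omega=\omega'$, and this common point lies in $\overline A\cap\overline B\cap\nu X$.

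For $(\Rightarrow)$ I argue contrapositively: assume $B(A,S)\cap B(B,S)$ is bounded for every $S>0$. I first record the equivalent reformulation that $\{a\in A: d(a,B)\le T\}$ and $\{b\in B: d(b,A)\le T\}$ are bounded for every $T>0$ (one direction is immediate; conversely a point within $S$ of both $A$ and $B$ produces a point of $A$ within $2S$ of $B$, and the two bounded sets are related by a bounded neighbourhood). In particular $A\cap B$ is bounded, so, deleting a bounded set (which changes neither $\overline A\cap\nu X$ nor $\overline B\cap\nu X$), I may assume $A\cap B=\emptyset$. Define $g\colon A\cup B\to[0,1]$ by $g\equiv 0$ on $A$ and $g\equiv 1$ on $B$. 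For given $R>0$, the bounded set $\{a\in A:d(a,B)\le R\}\cup\{b\in B:d(b,A)\le R\}$ witnesses that $g$ is slowly oscillating, since outside it any two points at distance $\le R$ lie in the same one of $A$, $B$. By Lemma~\ref{ext}, $g$ extends to a slowly oscillating function $X\to[0,1]$ and hence to a continuous $\tilde g\colon hX\to[0,1]$ which is $\equiv 0$ on $\overline A$ and $\equiv 1$ on $\overline B$; thus these closures are disjoint, whence $\overline A\cap\overline B\cap\nu X=\emptyset$ for the original sets as well.

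The main obstacle is the step $\omega=\omega'$ in $(\Leftarrow)$: the assertion that two sequences escaping to infinity at bounded distance from one another have the same $\mathcal U$-limit in the corona. This is precisely where slow oscillation of the functions defining $hX$ enters, and it is the coarse-geometric content of the lemma; the remaining steps are bookkeeping with closures in $hX$ together with the extension property of Lemma~\ref{ext}.
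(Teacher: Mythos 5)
First, note that the paper does not actually prove Lemma~\ref{int}: it is quoted as a special case of Proposition~2.3 of \cite{Dranetal}. So your argument is a genuinely self-contained alternative, and its overall architecture is sound. The $(\Leftarrow)$ direction is correct as written: properness gives $\omega,\omega'\in\nu X$ (closed balls are compact and $X$ is open in $hX$, so no ultrafilter limit of a sequence escaping every bounded set can land in $X$), and the point-separation argument via slowly oscillating \emph{continuous} functions, i.e.\ via $C(hX)$, correctly captures the coarse content ``bounded distance apart at infinity $\Rightarrow$ same corona point.''

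There is, however, a gap in the $(\Rightarrow)$ direction at the step ``$g$ extends to a slowly oscillating function $X\to[0,1]$ and hence to a continuous $\tilde g\colon hX\to[0,1]$.'' Lemma~\ref{ext} produces a slowly oscillating extension that need not be continuous on $X$, and only \emph{continuous} bounded slowly oscillating functions extend over $hX$ (your preamble's identification ``bounded slowly oscillating functions $=C(hX)$'' silently drops continuity; in the paper $C_h(X)$ is by definition the continuous ones, and in general $B_h(X)\supsetneq C_h(X)$). Consequently the claim that $\tilde g\equiv 0$ on all of $\overline A$ and $\equiv 1$ on all of $\overline B$ is not established as stated. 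The gap is fixable in either of two standard ways. (i) Reduce first to a uniformly discrete space: replace $X$ by a $1$-net $X_1$ (as the paper does before Theorem~\ref{corona}) and $A,B$ by their $1$-neighbourhoods intersected with $X_1$; then every function on $X_1$ is continuous, the hypothesis and the conclusion transfer along the homeomorphism $\nu X_1\cong\nu X$, and your argument goes through verbatim. (ii) Keep $X$ and use the canonical isomorphism $B_h(X)/B_0(X)\cong C_h(X)/C_0(X)$ quoted in Section~\ref{secCorona}: choose a continuous slowly oscillating $g'$ with $g'-G\in B_0(X)$, where $G$ is your extension; for a net in $A$ converging to a corona point the values of $g'-G$ tend to $0$ (the net eventually leaves every bounded set, since bounded sets have closure inside $X$), so the continuous extension of $g'$ is $0$ on $\overline A\cap\nu X$ and $1$ on $\overline B\cap\nu X$, which is all the disjointness you need --- only the corona parts of the closures matter, not the closures in $hX$.
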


The proof of the following theorem was inspired in part by the techniques used in \cite{AustinVirk}.

\begin{Theorem} \label{corona}
Let $f: X \rightarrow Y$ be a coarse map between proper metric spaces. Then $f$ is coarsely monotone if and only if the induced map $\nu f: \nu X \rightarrow \nu Y$ is (classically) monotone.
\end{Theorem}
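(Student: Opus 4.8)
The plan is to turn coarse monotonicity into a purely metric, chain‑theoretic condition, and then match it fibrewise with connectedness on the corona using Lemma~\ref{int}.

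\textbf{Step 1 (reformulation and easy reductions).} First I would record, using Lemma~\ref{mono} and the fact that any uniformly bounded cover of a metric space is refined by a cover by metric balls: $f$ is coarsely monotone iff $f$ is coarsely surjective and for every $R>0$ there are $S,T>0$ (with $T\ge R$) such that for every $y\in Y$ the set $f^{-1}(\overline B(y,R))$ lies in a single $S$-chain component of $f^{-1}(\overline B(y,T))$. (This is the point‑to‑point form of ``a family $\mathcal T$ of $\mathcal U$‑connected sets coarsening $f^{-1}(\mathcal V)$ with $f(\mathcal T)$ uniformly bounded''.) By the discussion preceding Lemma~\ref{int} we may assume $f$ is continuous, so $\nu f=hf|_{\nu X}$; by Proposition~\ref{coronaeq}(2), $\nu f$ is onto iff $f$ is coarsely surjective; and a coarsely monotone map is coarsely surjective. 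So everything reduces to matching ``every $\nu f$‑fibre is connected'' with the chain condition above.

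\textbf{Step 2 ($f$ coarsely monotone $\Rightarrow$ $\nu f$ monotone).} Suppose some fibre $(\nu f)^{-1}(\xi)=\mathcal P\sqcup\mathcal Q$ with $\mathcal P,\mathcal Q$ disjoint, closed and nonempty. By normality of the compact space $\nu X$, Urysohn, and transport to $X$ via Corollary~\ref{extcor}, I would build a slowly oscillating $g\colon X\to[0,1]$ whose extension is $0$ on an open neighbourhood of $\mathcal P$ and $1$ on one of $\mathcal Q$, and set $A_0=g^{-1}([0,\tfrac14))$, $A_1=g^{-1}((\tfrac34,1])$. Then $\overline{A_0}\cap\overline{A_1}\cap\nu X=\varnothing$, so $A_0,A_1$ are ``coarsely apart'' by Lemma~\ref{int}; moreover $\mathcal P\subseteq\overline{A_0}$, $\mathcal Q\subseteq\overline{A_1}$, the fibre lies in the two neighbourhoods on which $g\in\{0,1\}$ so $(\nu f)^{-1}(\xi)\subseteq\overline{A_0}\cup\overline{A_1}$, and a compactness argument with Lemma~\ref{int} produces a bounded $B^{*}\subseteq X$ and a neighbourhood $W_0$ of $\xi$ in $hY$ with $g\notin[\tfrac14,\tfrac34]$ on $f^{-1}(W_0\cap Y)\setminus B^{*}$. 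Since $\mathcal P,\mathcal Q$ map to $\xi$ we get $\xi\in\overline{f(A_0)}\cap\overline{f(A_1)}\cap\nu Y$, from which I would extract, for some fixed $R_0>0$, points $a\in A_0$, $b\in A_1$ with $f(a),f(b)\in W_0\cap Y$ arbitrarily far out and $d(f(a),f(b))\le R_0$ (this ``localised'' consequence of Lemma~\ref{int} is one delicate point). Applying Step 1 with $R=R_0$ gives $S,T$; choosing $a,b$ far enough out that $\overline B(f(a),T)\subseteq W_0$ and that $f^{-1}(\overline B(f(a),T))$ misses $B^{*}$ and the bounded set on which $g$ oscillates by $\tfrac14$ at scale $S$, an $S$‑chain from $a$ to $b$ inside $f^{-1}(\overline B(f(a),T))\subseteq f^{-1}(W_0\cap Y)\setminus B^{*}$ must contain a point $c$ with $g(c)\in[\tfrac14,\tfrac34]$ (since $g(a)<\tfrac14$, $g(b)>\tfrac34$ and consecutive $g$‑values along the chain differ by less than $\tfrac14$), contradicting the previous paragraph.

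\textbf{Step 3 ($\nu f$ monotone $\Rightarrow$ $f$ coarsely monotone).} Contrapositive: assume $f$ is coarsely surjective but the chain condition of Step 1 fails; then (noting that once $R$ is bad one may also keep $T$ fixed) there are $R,T>0$ and, for each $n$, a point $y_n$ and $a_n,b_n\in f^{-1}(\overline B(y_n,R))$ lying in different $n$‑chain components of $f^{-1}(\overline B(y_n,T))$. One checks $y_n\to\infty$; passing to a subsequence, arrange $y_n\to\xi\in\nu Y$ and, using properness, that the bounded sets $f^{-1}(\overline B(y_n,T))$ are pairwise far apart. With $P_n$ the $n$‑chain component of $a_n$ in $f^{-1}(\overline B(y_n,T))$ and $Q_n$ its complement there, $d(P_n,Q_n)>n$, so $\overline{\bigcup P_n}\cap\overline{\bigcup Q_n}\cap\nu X=\varnothing$ by Lemma~\ref{int}, while $a_n$ converges to $\alpha\in\overline{\bigcup P_n}$ and $b_n$ to $\beta\in\overline{\bigcup Q_n}$, both in $(\nu f)^{-1}(\xi)$. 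It then remains to show $(\nu f)^{-1}(\xi)\subseteq\overline{\bigcup P_n}\cup\overline{\bigcup Q_n}=\overline{f^{-1}\!\big(\bigcup_n\overline B(y_n,T)\big)}\cap\nu X$, which yields the disconnection. Arranging this — choosing the bad data so that $\bigcup_n\overline B(y_n,T)$ traces a coarse neighbourhood of $\xi$ in $Y$, equivalently replacing the balls $\overline B(y_n,T)$ by $f^{-1}$ of a shrinking neighbourhood basis at a carefully chosen $\xi$ and feeding in the bad points cofinally near $\xi$ — is the main technical obstacle I expect, and it is again handled by Lemma~\ref{int} together with a compactness argument over the family of bad points. (This covering issue in Step 3, together with the localised form of Lemma~\ref{int} used in Step 2, is really the crux of the whole proof; everything else is bookkeeping with chains and slowly oscillating functions.)
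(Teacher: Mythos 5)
Your Step 1 reformulation and your Step 2 are essentially the paper's argument: the paper also reduces to a continuous representative, separates the two pieces of the fibre by a function built on the compactification (Tietze there, Urysohn plus extension in your version), localises to a neighbourhood $D$ of the corona point using closedness of $hf$ so that the function avoids middle values on $hf^{-1}(D)$, applies Lemma~\ref{int} to the localised sets $A'$, $B'$ to get far-out pairs with uniformly close images, and then uses the coarse-monotone chain together with slow oscillation to produce a forbidden middle-value point. The two delicate points you flag in Step 2 (the localised use of Lemma~\ref{int}, and forcing the chain's image to stay in $W_0$, which needs a preliminary shrinking $W_1\subseteq\overline{W_1}\subseteq W_0$ plus one more application of Lemma~\ref{int}) are routine and are handled in essentially this way in the paper.

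The genuine gap is the covering claim in Step 3, exactly where you say the crux is, and your proposed repair does not work. With $T$ fixed and $P_n,Q_n\subseteq f^{-1}(\overline{B}(y_n,T))$, the fibre $(\nu f)^{-1}(\xi)$ will in general contain corona points that are limits of points of $X$ whose images approach $\xi$ entirely outside $\bigcup_n \overline{B}(y_n,T)$; nothing about the bad data forces every approach to $\xi$ to pass through the bad balls, so $(\nu f)^{-1}(\xi)\subseteq\overline{\bigcup P_n}\cup\overline{\bigcup Q_n}$ simply fails, and no subsequence or ``feeding the bad points cofinally near $\xi$'' can create trace neighbourhoods of $\xi$ out of balls of bounded radius. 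Indeed fixing $T$ is precisely the wrong normalisation: the paper takes bad data in which the enlargement radius grows, namely $f^{-1}(U_m)$ not contained in an $m$-component of $f^{-1}(B(U_m,m))$ with these sets pairwise far apart, defines a $\{0,1\}$-valued function on $\bigcup_m f^{-1}(B(U_m,m))$ constant on $m$-components and surjective on each $f^{-1}(U_m)$, and then \emph{extends} it to a slowly oscillating $[0,1]$-valued function $g'$ on all of $X$ via Lemma~\ref{ext}. The corona point $y$ is chosen afterwards in $\overline{f(A')}\cap\overline{f(B')}\cap\nu Y$, and the covering of the fibre is proved by showing that $g'$ takes only the values $0$ and $1$ over $y$: if some fibre point had an intermediate value, one takes a set $S$ witnessing this, applies Lemma~\ref{int} to $f(A')$ and $f(S)$ to get an unbounded set where the images are within some fixed $R$ of $\bigcup U_m$, and then the growing radius absorbs $R$ (for $m>2R$ such points of $S$ lie in $f^{-1}(B(U_m,m))$, where $g'\in\{0,1\}$), a contradiction. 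So the disconnection must come from the disjoint closed sets $\overline{A}$, $\overline{B}$ attached to a globally defined slowly oscillating function, not from the closures of the bad sets themselves; that extension trick, together with letting the enlargement radius tend to infinity, is the idea missing from your Step 3.
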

\begin{proof}
By the above remarks, we may assume that $X$ is topologically discrete, and that $\nu f$ is the restriction of a continuous extension $hf$. 

$(\Rightarrow)$ By Proposition \ref{coronaeq}, $\nu f$ is surjective. Suppose for contradiction that there is a $y \in \nu Y$ whose fibre under $\nu f$ is disconnected, i.e.~$\nu f^{-1}(y) = A \cup B$ with $A$ and $B$ disjoint closed subsets. Define a function $g$ on $A\cup B$ to $\mathbb{C}$ which sends $A$ to $0$ and $B$ to $1$. By the Tietze Extension Theorem we can extend $g$ to a bounded continuous function $g' : hX \rightarrow \mathbb{C}$. In particular, $g'|_X: X \rightarrow \mathbb{C}$ must be slowly oscillating. Since $hf$ is a closed map, and $hY$ is compact, we may choose a open neighbourhood $D$ of $y$ in $hY$ such that 
$$
hf^{-1}(D) \subseteq g'^{-1}(B(0, 1/4) \cup B(1, 1/4))
$$
Let $A' = g'^{-1}(B(0, 1/4)) \cap hf^{-1}(D) \cap  X$ and $B' = g'^{-1}(B(1, 1/4)) \cap hf^{-1}(D) \cap X$. Since $y \in \overline{hf(A')} \cap \overline{hf(B')} \cap \nu Y$, by Lemma \ref{int}, there is an $S > 0$ such that, for every bounded set $K$ in $X$, there is a pair $a_K, b_K$ of points in $X$ with $a_K \in A' \setminus K$, $b_K \in B' \setminus K$ and $d(f(a_K), f(b_K)) \leq S$.  We now use the coarse monotone property of $f$ to get that each $a_K$ and $b_K$ are $\mathcal{W}$-connected inside an element of $f^{-1}(\mathcal{V})$ for some uniformly bounded families $\mathcal{W}$ in $X$ and $\mathcal{V}$ in $Y$. Since $g'$ is slowly oscillating, there is some bounded set $K'$ in $X$ such that, for $K' \subseteq K$, there must be an element of the chain from $a_K$ to $b_K$, say $c_K$, such that $g'(c_K) \in \mathbb{C} \setminus (B(0, 1/3) \cup B(1, 1/3))$. It follows that the closure of $C = \{ c_K \mid K' \subseteq K\}$ in $X$ intersected with $\nu X$ does not intersect $g'^{-1}(B(0, 1/4))$, and thus that $\overline{hf(C)}$ does not intersect $\overline{f(A')}$. But the set $\{ f(c_K), f(a_K) \}$ is uniformly bounded, so the closure of the $f(c_K)$ and the $f(a_K)$ intersect in $\nu Y$ by Lemma \ref{int}, which is a contradiction.

$(\Leftarrow)$ By Proposition \ref{coronaeq}, $f$ is coarsely surjective. Suppose $f$ is not coarsely monotone. This means that there is a uniformly bounded cover $\mathcal{U}$ of $Y$ such that for every integer $m > 0$, there is an element $U_{m} \in \mathcal{U}$ such that $f^{-1}(U_{m})$ is not contained in an $m$-component of $f^{-1}(B(U_{m}, m))$. Moreover, we may assume that the distance from each $f^{-1}(B(U_{m}, m))$ to all the previous $f^{-1}(B(U_i, i))$ tends to infinity as $m \rightarrow \infty$. Define a map $g$ on the union of the $f^{-1}(B(U_m, m))$ to $\{0,1\}$ such that each $m$-component of $f^{-1}(B(U_m, m))$ is mapped to a single value, and such that $g$ is surjective on each $f^{-1}(U_m)$. It is easy to see that this is a slowly oscillating function, so that it extends to a slowly oscillating function $g': X \rightarrow [0,1] \subseteq \mathbb{C}$. The map $g'$ extends to a continuous map $g'': hX \rightarrow \mathbb{C}$. Let $A = g''^{-1}(0) \cap X$ and $B = g''^{-1}(1) \cap X$ and let $A'$ and $B'$ be the intersection of the union of the $f^{-1}(U_m)$ with $A$ and $B$ respectively. By Lemma \ref{int}, the closures of $f(A')$ and $f(B')$ intersect on $\nu Y$. Suppose $y \in \overline{f(A')} \cap \overline{f(B')}$. We claim that $\overline{A}$ and $\overline{B}$ cover $hf^{-1}(y) \subseteq \nu X$. Indeed, suppose that $hf(x) = y$ with $g''(x) \notin \{0,1\}$. Pick a subset $S$ of $X$ such that $g''(S) \subseteq B(g'(x), \varepsilon)$ for $\varepsilon > 0$ small, and $x \in \overline{S}$. Since $\overline{f(A')}$ and $\overline{f(S)}$ intersect, by Lemma \ref{int}, there must be a $R > 0$ such that $B(f(A'), R) \cap B(f(S), R)$ is unbounded. But then $S$ must intersect some $f^{-1}(B(U_m, m))$, which is a contradiction, since $g''$ was defined to be $0$ or $1$ on these sets. Thus the fibre of $y$ under $hf$ is covered by the disjoint closed sets $\overline{A}$ and $\overline{B}$, so it cannot be connected. 
\end{proof}

As was previously mentioned, the Higson corona can also be defined for arbitrary ls-spaces. Unfortunately, Theorem \ref{corona} no longer holds in this more general context, as the following example shows.

\begin{Example}
Let $X$ be $\mathbb{N}$ with the usual metric ls-structure, and let $Y$ be the set $\mathbb{N}$ with the \textbf{universal bounded geometry structure}~\cite{Roe}, i.e.~wherein the uniformly bounded families are those families $\mathcal{U}$ such that $\{ |U| \mid U \in \mathcal{U} \}$ is bounded and $\mathcal{U}$ has finite point multiplicity. Recall from \cite{Roe} that the Higson corona of $Y$ is the one-point space, and recall from \cite{WeiConn} that $\mathbb{N}$ has a connected Higson corona. Let $f: X \rightarrow Y$ be the identity set map. It is clearly coarse, but is not coarsely monotone. Indeed, consider the uniformly bounded family $\mathcal{V} = \{ \{n^2, (n+1)^2\} \mid n \in \mathbb{N} \}$ in $Y$. The family $f^{-1}(\mathcal{V})$ does not refine an $M$-connected family of subsets with bounded cardinality for any $M$, so $f$ is not monotone. Nonetheless, the induced map on Higson coronas sends $\nu X$ to a single point, and is consequently (classically) monotone. 
\end{Example}

In fact, it is easy to see that in the above example, $f$ is coarsely light, but the induced map $\nu f$ is not classically light. Thus we cannot expect an equivalence of the form of Theorem \ref{corona} for coarsely/classically light maps for general ls-spaces. 

\begin{Proposition}
Let $f: X \rightarrow Y$ be a coarse map between proper metric spaces. If $\nu f$ is light, then $f$ is coarsely light.
\end{Proposition}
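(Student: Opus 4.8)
The plan is to prove the contrapositive: assuming $f$ is not coarsely light, I will produce a point of $\nu Y$ whose fibre under $\nu f$ contains a non-degenerate continuum, so that $\nu f$ is not light. The engine is the theorem of Dranishnikov (\cite{Dranishnikov00}) identifying asymptotic dimension with the covering dimension of the Higson corona, together with Proposition~\ref{coronaeq}(1).

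First I would reduce and set things up. As in the remarks preceding Theorem~\ref{corona}, replace $X$ by a $1$-net so that $f$ may be assumed continuous and $\nu f = hf|_{\nu X}$; this affects neither whether $f$ is coarsely light nor whether $\nu f$ is light. Since $f$ is not coarsely light and $Y$ is a metric space, Proposition~\ref{lightequiv} (equivalence of (a) and (c)) yields a subspace $B \subseteq Y$ with $\asdim B = 0$ but $\asdim f^{-1}(B) \ge 1$; replacing $B$ by its closure (which changes neither $\asdim B$ nor the inequality $\asdim f^{-1}(B) \ge 1$, since $\overline{B}$ is coarsely equivalent to $B$ and $f^{-1}(B) \subseteq f^{-1}(\overline{B})$) we may assume $B$ is closed in $Y$, hence a proper metric space. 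Write $A = f^{-1}(B)$, which is closed in $X$ — recall $X$ is now topologically discrete — hence a proper metric space, and let $\bar f = f|_A : A \to B$. The inclusions $A \hookrightarrow X$ and $B \hookrightarrow Y$ are coarse embeddings, and since $(B \hookrightarrow Y)\circ \bar f = f \circ (A \hookrightarrow X)$, applying the functor $\nu$ gives a commutative square
$$\xymatrix{ \nu A \ar[r]^{\nu \bar f} \ar[d] & \nu B \ar[d] \\ \nu X \ar[r]_{\nu f} & \nu Y }$$
in which, by Proposition~\ref{coronaeq}(1) together with compactness of the coronas, both vertical maps are topological embeddings.

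Next I would feed in the dimension theory. Since $\asdim B = 0$, Dranishnikov's theorem gives $\dim \nu B = 0$, so $\nu B$ is a zero-dimensional compact Hausdorff space and therefore totally disconnected. Since $\asdim A \ge 1$, the same theorem gives $\dim \nu A \ge 1$, so $\nu A$ is not totally disconnected and thus contains a connected subset with at least two points; passing to its closure we obtain a non-degenerate subcontinuum $\Gamma \subseteq \nu A$. Viewing $\Gamma$ inside $\nu X$ via the embedding $\nu A \hookrightarrow \nu X$, commutativity of the square gives $\nu f(\Gamma) \subseteq \nu B \subseteq \nu Y$; but $\nu f(\Gamma)$ is connected, being a continuous image of the connected set $\Gamma$, and it lies in the totally disconnected space $\nu B$, so $\nu f(\Gamma)$ is a single point $y$. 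Hence $\Gamma \subseteq \nu f^{-1}(y)$, so the fibre $\nu f^{-1}(y)$ is not totally disconnected, i.e.~$\nu f$ is not light, as required.

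The main obstacle is the dimension-theoretic input, specifically the need for $\asdim A \ge 1 \Rightarrow \dim \nu A \ge 1$ in the case where $\asdim A$ is infinite, since Dranishnikov's theorem as usually stated assumes finite asymptotic dimension. This can be dealt with either by citing the standard extension of the identity $\asdim = \dim\nu$ to the value $\infty$, or by first replacing $A$ by a subspace of positive finite asymptotic dimension — for instance a sparse union of long simple paths extracted from the unbounded $\mathcal{U}$-components of the sets $f^{-1}(V)$ furnished by the failure of coarse lightness, whose images still lie in a subspace of $Y$ of asymptotic dimension $0$. All the remaining ingredients — the reduction to a continuous map, the properness of $A$ and $B$, and the functoriality of $\nu$ producing the commutative square — are routine.
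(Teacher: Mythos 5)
Your overall architecture (pass to a $1$-net, restrict to $A=f^{-1}(B)$ and $B$, embed $\nu A$ and $\nu B$ into $\nu X$ and $\nu Y$ via Proposition~\ref{coronaeq}(1), and push a nondegenerate continuum of $\nu A$ into a single fibre because $\nu B$ is totally disconnected) is sound, and it is genuinely different from the paper's proof, which simply factorizes $f\sim f'e$ with $e$ coarsely monotone, applies Theorem~\ref{corona} to get $\nu e$ monotone, deduces that $\nu e$ is a homeomorphism since $\nu f$ is light, and concludes via Proposition~\ref{coronaeq}(3) that $e$ is a coarse equivalence. The genuine gap in your version is exactly the step you flag: the case $\asdim f^{-1}(B)=\infty$. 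Dranishnikov's theorem gives $\asdim A=\dim\nu A$ only under the hypothesis $\asdim A<\infty$ (in general only $\dim\nu A\le\asdim A$ is known, from \cite{Dranetal}); there is no ``standard extension of the identity to the value $\infty$'' to cite --- whether $\dim\nu A$ can be finite while $\asdim A$ is infinite is precisely the open part of the problem. So the implication you need, namely $\dim\nu A=0\Rightarrow\asdim A=0$, is not available by citation; if true, it requires its own argument (essentially a clopen-separation versus crossing-chain argument of the same kind the paper uses for the $(\Leftarrow)$ direction of Theorem~\ref{corona}), and your proposal does not supply it.

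Your fallback patch does not close the gap either. ``Long simple paths'' extracted from the unbounded $\mathcal{U}$-components of the sets $f^{-1}(V)$ need not form a subspace of finite asymptotic dimension: an $\mathcal{U}$-chain joining two far-apart points of a component is not coarsely an interval with uniform constants. The components can be corridor/maze-like (a single $R$-chain that space-fills a grid of growing dimension at a fixed larger scale), in which case every $\mathcal{U}$-connected subset of large diameter again coarsely contains high-dimensional cubes, and any sparse union of such extractions with diameters tending to infinity still has infinite asymptotic dimension. (The sparseness part of your sketch is fine --- properness of $f$ rules out infinitely many $V_i$ accumulating in a bounded set, as in the proof of Proposition~\ref{lightequiv} --- it is the finite-dimensionality of the extraction that is unjustified.) As written, your contrapositive argument therefore only covers the case $\asdim f^{-1}(B)<\infty$; to repair it you would need either a proof that $\asdim A\ge 1$ forces $\nu A$ to be non-totally-disconnected, or a genuinely finite-dimensional connected extraction, whereas the paper's factorization argument disposes of the general case in a few lines without any dimension theory.
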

\begin{proof}
Factorize $f = f'e$ with $f'$ coarsely light and $e$ coarsely monotone. By Theorem \ref{corona}, $\nu e$ is monotone. Since $\nu f$ is light and factors through $\nu e$, $\nu e$ must be a homeomorphism. But by Proposition \ref{coronaeq}, this implies that $e$ is a coarse equivalence, so that $f$ is light as required.
\end{proof}

\begin{Question}
 Suppose $f: X \rightarrow Y$ is a coarse map between proper metric spaces. Is $\nu f$ light if $f$ is coarsely light?
\end{Question}

\section{Asymptotic dimension and exactness} \label{secPreserve}

In this section we investigate the permanence of some coarse properties under coarsely light maps. Since we have already seen that coarsely $n$-to-$1$ maps are coarsely light, these results generalize some results of Dydak-Virk \cite{DyVi} obtained for coarsely $n$-to-$1$ maps.

\begin{Proposition}\label{lightasdim}
Suppose $f:X\to Y$ is an ls-continuous map between ls-spaces. If $f$ is coarsely light, then the asymptotic dimension of $X$ is at most the asymptotic dimension of $Y$.
\end{Proposition}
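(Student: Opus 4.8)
The statement to prove is that if $f : X \to Y$ is coarsely light, then $\asdim X \le \asdim Y$. If $\asdim Y = \infty$ there is nothing to prove, so assume $\asdim Y = n < \infty$. The plan is to start with an arbitrary uniformly bounded cover $\mathcal{U}$ of $X$ and produce a coarsening of point multiplicity at most $n+1$, building it out of a good cover of $Y$ pulled back through $f$ and then refined using the coarse lightness hypothesis.

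First I would fix a uniformly bounded cover $\mathcal{U}$ of $X$, which we may assume is a cover of $X$ via trivial extension. Apply ls-continuity of $f$ to get that $f(\mathcal{U})$ is uniformly bounded in $Y$, and then use $\asdim Y \le n$ to find a uniformly bounded cover $\mathcal{V}$ of $Y$ coarsening $f(\mathcal{U})$ with point multiplicity at most $n+1$. Now consider the family $c(\mathcal{U}, f, \mathcal{V})$, the $\mathcal{U}$-components of the elements of $f^{-1}(\mathcal{V})$: since $f$ is coarsely light, this family is uniformly bounded in $X$. The key point is the following multiplicity estimate: if a point $x \in X$ lies in two distinct $\mathcal{U}$-components, say $C \subseteq f^{-1}(V)$ and $C' \subseteq f^{-1}(V')$ with $V, V' \in \mathcal{V}$, then $f(x) \in V \cap V'$, so the number of members of $c(\mathcal{U}, f, \mathcal{V})$ containing $x$ is bounded by the number of members of $\mathcal{V}$ containing $f(x)$, which is at most $n+1$. (Here I am using that within a single $V$, the $\mathcal{U}$-components of $f^{-1}(V)$ are pairwise disjoint, so distinct components through $x$ must come from distinct elements of $\mathcal{V}$.) Hence $c(\mathcal{U}, f, \mathcal{V})$ has point multiplicity at most $n+1$.

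Finally I would check that $c(\mathcal{U}, f, \mathcal{V})$ actually coarsens $\mathcal{U}$: each $U \in \mathcal{U}$ maps into some $V \in \mathcal{V}$ (as $\mathcal{V}$ coarsens $f(\mathcal{U})$), so $U \subseteq f^{-1}(V)$, and $U$ being $\mathcal{U}$-connected it lies in a single $\mathcal{U}$-component of $f^{-1}(V)$, i.e.\ in a member of $c(\mathcal{U}, f, \mathcal{V})$. (Singletons from the trivial extension are handled trivially.) So $c(\mathcal{U}, f, \mathcal{V})$ is a uniformly bounded coarsening of $\mathcal{U}$ of point multiplicity at most $n+1$, and since $\mathcal{U}$ was arbitrary, $\asdim X \le n$.

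The only step requiring genuine care is the multiplicity bound, and specifically the observation that two distinct $\mathcal{U}$-components of $f^{-1}(V)$ and $f^{-1}(V')$ passing through a common point $x$ force $V \ne V'$ — one must rule out the degenerate possibility that $\mathcal{V}$ has repeated elements or that the labelling by elements of $\mathcal{V}$ is not injective on components through $x$; this is handled by counting components-with-their-labels and noting that for fixed $V$ the components are disjoint, so each $V \ni f(x)$ contributes at most one component through $x$. Everything else (uniform boundedness, coarsening, ls-continuity) is routine and follows from the results already established, in particular the Proposition characterizing coarse lightness via $c(\mathcal{U}, f, \mathcal{V})$ being uniformly bounded.
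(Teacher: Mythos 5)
Your proposal is correct and follows essentially the same route as the paper: pull back a multiplicity-$(n+1)$ coarsening $\mathcal{V}$ of $f(\mathcal{U})$ and take the family $c(\mathcal{U},f,\mathcal{V})$ of $\mathcal{U}$-components of $f^{-1}(\mathcal{V})$, which is uniformly bounded by coarse lightness, coarsens $\mathcal{U}$, and has multiplicity at most $n+1$. The only difference is that you spell out the multiplicity and coarsening checks that the paper states without detail.
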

\begin{proof}
Suppose $\asdim(Y)\leq k < \infty$ and $\mathcal{U}$ is a uniformly bounded cover of $X$. Pick a uniformly bounded cover $\mathcal{V}$ of $Y$ coarsening $f(\mathcal{U})$ that has multiplicity at most $k+1$. Consider the family $\mathcal{W}$ consisting of the $\mathcal{U}$-components of elements of $f^{-1}(\mathcal{V})$. Since $f$ is light, $\mathcal{W}$ is uniformly bounded. Moreover, it coarsens $\mathcal{U}$, and its multiplicity is at most $k+1$. 
\end{proof}

The following definition generalizes the concept of exactness from metric spaces (as introduced by Dadarlat-Guentner \cite{DaGu})
to arbitrary ls-spaces. For an index set $S$, let $\Delta(S)$ denote the set of formal linear combinations 
$$
\sum_{s \in S} a_s \cdot s 
$$
such that $a_s \in [0,1]$ for each $s$, $a_s = 0$ for all but finitely many $s$, and $\sum a_s = 1$. We will equip $\Delta(S)$ with the $l^1$ metric. The \textbf{star} of a vertex $s \in S$ is the set of all elements of $\Delta(S)$ with $a_s \neq 0$. By a \textbf{partition of unity} on a set $X$, we mean a map $\phi: X \rightarrow \Delta(S)$ for some set $S$. Recall that the \textbf{mesh} of a family $\mathcal{U}$ of subsets of a metric space $X$ is defined as follows
$$
\mathsf{mesh}(\mathcal{U}) = \mathsf{sup}\{ \mathsf{diam}(U) \mid U \in \mathcal{U} \}.
$$
In particular, the family $\mathcal{U}$ is uniformly bounded if and only if it has finite mesh.

\begin{Definition}
 A large scale space $X$ is \textbf{exact} if for each uniformly bounded cover
 $\mathcal{U}$ of $X$ and each $\epsilon > 0$ there is a partition of unity
 $\phi:X\to \Delta(S)$ such that point-inverses of stars of vertices form a uniformly bounded cover of $X$
 and the mesh of $\phi(\mathcal{U})$ is smaller than $\epsilon$.
\end{Definition}

\begin{Theorem}\label{ExactnessThm}
 Suppose $f:X\to Y$ is a large scale continuous map between ls-spaces.
 If $f$ is coarsely light and $Y$ is exact, then $X$ is exact.
\end{Theorem}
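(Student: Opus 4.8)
The plan is to pull back, and then refine, a partition of unity witnessing exactness of $Y$. Let $\mathcal{U}$ be a uniformly bounded cover of $X$ and $\epsilon > 0$; we may assume $\mathcal{U}$ is a cover. Since $f$ is ls-continuous, $f(\mathcal{U})$ is a uniformly bounded cover of $Y$, so by exactness of $Y$ there is a partition of unity $\psi: Y \to \Delta(T)$ such that the family $\mathcal{V} = \{V_t \mid t \in T\}$, where $V_t = \psi^{-1}(\operatorname{star}(t))$, is a uniformly bounded cover of $Y$, and $\mathsf{mesh}(\psi(f(\mathcal{U}))) < \epsilon$. Note that for $x \in X$ and $t \in T$, the $t$-coordinate of $\psi(f(x))$ is nonzero if and only if $x \in f^{-1}(V_t)$. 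For each $t$ write the $\mathcal{U}$-components of $f^{-1}(V_t)$ as $\{C_{t,\alpha}\}_{\alpha \in A_t}$. Since $f$ is coarsely light, the family $c(\mathcal{U}, f, \mathcal{V}) = \{C_{t,\alpha} \mid t \in T,\ \alpha \in A_t\}$ is uniformly bounded in $X$, and it is a cover of $X$ because $\mathcal{V}$ covers $Y$ and $\mathcal{U}$ covers $X$.

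Now set $S = \{(t,\alpha) \mid t \in T,\ \alpha \in A_t\}$ and define $\phi: X \to \Delta(S)$ as follows: for $x \in X$, write $\psi(f(x)) = \sum_{t} a_t \cdot t$, and for each $t$ with $a_t \neq 0$ let $\alpha(x,t) \in A_t$ be the unique index with $x \in C_{t,\alpha(x,t)}$ (which exists since $a_t \neq 0$ forces $x \in f^{-1}(V_t)$); put
$$\phi(x) = \sum_{t : a_t \neq 0} a_t \cdot (t, \alpha(x,t)).$$
This is a well-defined partition of unity (no continuity of $\phi$ is required). I claim $\phi^{-1}(\operatorname{star}(t,\alpha)) = C_{t,\alpha}$ for every $(t,\alpha) \in S$: indeed, $x$ lies in this point-inverse iff the $(t,\alpha)$-coordinate of $\phi(x)$ is nonzero, iff $a_t \neq 0$ and $\alpha(x,t) = \alpha$, iff $x \in C_{t,\alpha}$. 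Hence the point-inverses of stars of vertices of $\phi$ are exactly the members of the uniformly bounded cover $c(\mathcal{U}, f, \mathcal{V})$ of $X$ established above.

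It remains to check that $\mathsf{mesh}(\phi(\mathcal{U})) < \epsilon$. Fix $U \in \mathcal{U}$ and $x, x' \in U$; write $\psi(f(x)) = \sum_t a_t \cdot t$ and $\psi(f(x')) = \sum_t a'_t \cdot t$. The key observation is that whenever $a_t \neq 0$ and $a'_t \neq 0$ we have $x, x' \in f^{-1}(V_t)$ and $x\,\mathcal{U}\,x'$, so $x$ and $x'$ lie in the same $\mathcal{U}$-component of $f^{-1}(V_t)$, i.e. $\alpha(x,t) = \alpha(x',t)$. Consequently, for each fixed $t$ the contribution of the coordinates $\{(t,\alpha) \mid \alpha \in A_t\}$ to $\|\phi(x) - \phi(x')\|_{\ell^1}$ is $|a_t - a'_t|$ in every case (both zero; the same vertex with coefficients $a_t, a'_t$; or one vertex $(t,\alpha(x,t))$ with coefficient $a_t$ against coefficient $0$). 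Summing over $t$ gives $\|\phi(x) - \phi(x')\|_{\ell^1} = \sum_t |a_t - a'_t| = \|\psi(f(x)) - \psi(f(x'))\|_{\ell^1}$, whence $\diam(\phi(U)) \le \mathsf{mesh}(\psi(f(\mathcal{U}))) < \epsilon$; taking the supremum over $U$ yields $\mathsf{mesh}(\phi(\mathcal{U})) < \epsilon$, completing the proof. The only genuinely substantive point — and the place where coarse lightness is used twice — is the bookkeeping in this last step: splitting each vertex $t$ of $T$ into the $\mathcal{U}$-components of $f^{-1}(V_t)$ keeps the point-inverses of stars uniformly bounded (coarse lightness, via $c(\mathcal{U},f,\mathcal{V})$), while the fact that $x\,\mathcal{U}\,x'$ forces a common component for each relevant $t$ guarantees that this refinement does not inflate the mesh.
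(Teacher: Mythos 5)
Your proof is correct and follows essentially the same route as the paper: pull back the partition of unity witnessing exactness of $Y$ along $f$, split each vertex into the $\mathcal{U}$-components of the corresponding point-inverse of a star (uniformly bounded by coarse lightness), and observe that two $\mathcal{U}$-related points land in the same component, so the $\ell^1$ distance is unchanged and the mesh bound survives. Your indexing by pairs $(t,\alpha)$ and the explicit $\ell^1$ bookkeeping only make explicit what the paper leaves implicit.
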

\begin{proof}
 Suppose
 $\mathcal{U}$ is a uniformly bounded cover of $X$
and $\epsilon > 0$. Choose a partition of unity $\phi:Y\to \Delta(S)$ such that point-inverses of stars of vertices form a uniformly bounded cover of $Y$ and the mesh of $\phi(f(\mathcal{U}))$ is smaller than $\epsilon$. Consider the family $J$ of $\mathcal{U}$-components of point-inverses of stars of vertices of the partition of unity $\phi\circ f:X\to \Delta(S)$. Create a new partition of unity $\psi:X\to \Delta(J)$ as follows:
  $$\psi(x)=\sum\limits_{j\in J} a_j\cdot j,$$
where $a_j\ne 0$ only if $x$ belongs to $j$, in which case it equals the coefficient of $\phi(f(x))$ at the corresponding $s\in S$. If $x, y$ belong to $U\in \mathcal{U}$, then they always belong to the same $\mathcal{U}$-component, so the distance from $\psi(x)$ to $\psi(y)$ is less than $\epsilon$. Since $f$ was coarsely light, the family of point-inverses of stars of vertices (that is, the family $J$) is uniformly bounded.
\end{proof}

\begin{Corollary} \label{PropertyA}
 Suppose $f:X\to Y$ is a large scale continuous map between metric spaces of bounded geometry.
 If $f$ is coarsely light and $Y$ has Property A \cite{Yu00}, then $X$ has Property A.
\end{Corollary}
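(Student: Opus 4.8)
The plan is to deduce this from Theorem~\ref{ExactnessThm} together with the standard fact that, for metric spaces of bounded geometry, Yu's Property A is equivalent to exactness. More precisely, the notion of exactness introduced above restricts, on metric spaces, to the one of Dadarlat--Guentner~\cite{DaGu}, and for bounded geometry metric spaces that notion is known to be equivalent to Property A in the sense of Yu~\cite{Yu00}.

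Granting this, the argument is immediate. First, since $Y$ has bounded geometry and Property A, $Y$ is exact. Second, $f$ is coarsely light and $Y$ is exact, so Theorem~\ref{ExactnessThm} gives that $X$ is exact. Finally, $X$ also has bounded geometry, so exactness of $X$ implies that $X$ has Property A, which is what we wanted.

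The only real work is the bookkeeping needed to match our Definition of exactness with the one in~\cite{DaGu} on metric spaces of bounded geometry, and to quote the Property-A/exactness equivalence in the precise form used; this is the closest thing to an obstacle. Alternatively one can bypass the references entirely and re-run the proof of Theorem~\ref{ExactnessThm} with ``exact'' replaced by ``has Property A'' throughout: bounded geometry of $Y$ and coarse lightness of $f$ together supply the finiteness of supports needed for the partition of unity $\psi: X \to \Delta(J)$ constructed there to witness Property A of $X$ directly.
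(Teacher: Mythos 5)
Your proposal is correct and follows the same route as the paper: the paper's proof simply invokes the Dadarlat--Guentner result from \cite{DaGu} that a bounded geometry metric space has Property A if and only if it is exact, and then applies Theorem~\ref{ExactnessThm}. Your extra remarks about matching definitions (and the optional direct re-run of the partition-of-unity argument) are fine but not needed beyond what the paper does.
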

\begin{proof}
As shown in \cite{DaGu}, a metric space of bounded geometry has Property A if and only if it is exact.
\end{proof}

\section{Groups}
In this section we make some remarks on the case when the ls-spaces involved are groups and the maps are group homomorphisms. Let $X$ be a (discrete) group. Following \cite{DH} we equip $X$ with the ls-structure consisting of all refinements of covers of the form
$$
\{ x\cdot F \mid x \in X\}
$$
where $F$ is a finite subset of $X$. If $X$ is countable, then this ls-structure coincides with the ls-structure arising from any proper left-invariant metric on $X$ (see \cite{JSmith}). In particular, if $X$ is finitely generated, then this ls-structure coincides with that induced by the word-length metric associated to any choice of finite generating set (see for example \cite{NowakYu}). Clearly any group homomorphism $f: X \rightarrow Y$ is ls-continuous with respect to the ls-structures on $X$ and $Y$.

\begin{Lemma}\label{asdimgroups}
Let $X$ be a group. Then $X$ has asymptotic dimension zero if and only if it is locally finite (i.e.~every finitely generated subgroup is finite). 
\end{Lemma}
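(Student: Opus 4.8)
The plan is to prove both implications directly from the definitions, using the characterization of asymptotic dimension zero given just before Proposition~\ref{lightequiv}: an ls-space has $\asdim \leq 0$ if and only if for every uniformly bounded family $\mathcal{U}$, the $\mathcal{U}$-components form a uniformly bounded family. For a group $X$ equipped with the ls-structure generated by the covers $\{x \cdot F \mid x \in X\}$ with $F$ ranging over finite subsets of $X$, it suffices (by condition (1) of a large scale structure and the fact that refinements preserve the component structure up to refinement) to test this condition on the generating covers $\mathcal{U}_F = \{x \cdot F \mid x \in X\}$; we may further enlarge $F$ so that it contains the identity $e$ and is symmetric ($F = F^{-1}$), since any finite set sits inside such a set and the corresponding cover coarsens the original.

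First I would prove the ``if'' direction. Suppose $X$ is locally finite, and fix a symmetric finite $F \ni e$. Let $H = \langle F \rangle$ be the subgroup generated by $F$; by local finiteness $H$ is finite. I claim the $\mathcal{U}_F$-component of a point $x$ is exactly the coset $xH$. Indeed, if $x' \in xH$ then $x' = x f_1 f_2 \cdots f_k$ with each $f_i \in F$, and the chain $x,\ xf_1,\ xf_1 f_2,\ \ldots,\ x'$ witnesses that $x \mathcal{U}_F x_i$ at each step (since $x f_1 \cdots f_i$ and $x f_1 \cdots f_{i+1}$ both lie in $(x f_1 \cdots f_i) \cdot F$, using $e \in F$); conversely any $\mathcal{U}_F$-chain from $x$ stays inside $xH$, since $a \mathcal{U}_F b$ forces $a^{-1} b \in F \cdot F^{-1} \subseteq H$ (here symmetry and $e \in F$ are used so that two points in a common translate $y F$ differ by an element of $F F^{-1}$). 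Hence the $\mathcal{U}_F$-components are the cosets of $H$, each of which has diameter bounded by $|H| < \infty$ independently of the coset, so the components form a uniformly bounded family, all having cardinality $\leq |H|$. As $F$ was arbitrary, $\asdim X = 0$.

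For the ``only if'' direction, suppose $X$ is \emph{not} locally finite, so there is a finitely generated infinite subgroup; taking a finite symmetric generating set $F \ni e$ for it, the subgroup $H = \langle F \rangle$ is infinite. By exactly the computation above, the $\mathcal{U}_F$-component of $e$ is all of $H$, which is unbounded in $X$ (a subset of a group with this ls-structure is bounded iff it is finite, since bounded sets are contained in finite translates of finite sets, hence finite). So the $\mathcal{U}_F$-components do not form a uniformly bounded family, and $\asdim X \neq 0$. I expect the only mildly delicate point to be the bookkeeping in reducing to symmetric generating covers and in verifying the two inclusions identifying $\mathcal{U}_F$-components with cosets of $\langle F \rangle$; everything else is a direct unwinding of definitions, and no serious obstacle is anticipated.
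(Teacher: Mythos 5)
Your proof is correct and takes essentially the same route as the paper's: both directions come down to identifying the $\mathcal{U}_F$-components with (left translates of) the subgroup $\langle F \rangle$, so that local finiteness is equivalent to these components forming a uniformly bounded family. Your only deviations are cosmetic --- you make the reduction to symmetric $F \ni e$ and to generating covers explicit, and you phrase the ``only if'' direction contrapositively --- so nothing needs to change.
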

\begin{proof}
$(\Rightarrow)$: Let $F$ be a finite subset, and $\mathcal{U} = \{ x \cdot F \mid x \in X \}$ the corresponding cover. Note that any element of $\langle F \rangle$ is $\mathcal{U}$-connected to the identity element $e$, so that $\langle F \rangle$ is contained in the $\mathcal{U}$-component of $e$, which by hypothesis is bounded and hence finite.

$(\Leftarrow)$: Let $\mathcal{U} = \{ x\cdot F \mid x \in X\}$ be a cover where $F$ is a finite subset. Notice that $x \mathcal{U} y$ for elements $x, y \in X$ if and only if $x^{-1}y \in F\cdot F$, from which it follows inductively that if $x$ and $y$ are $\mathcal{U}$-connected, then $x^{-1}y \in \langle F \rangle$. By assumption, $\langle F \rangle$ is finite, so the $\mathcal{U}$-component of $e$ is finite. Every other $\mathcal{U}$-component is a left translation of this component, so the family of $\mathcal{U}$-components is uniformly bounded, as required.
\end{proof}

Note that the above lemma was proved for countable groups in \cite{JSmith}; the proof given above is a straightforward adaptation of the proof found there. We will need the following lemma, based on the Finite Union Theorem in \cite{BellDran}. For a subspace $A \subseteq X$ of an ls-space $X$ and a family $\mathcal{U}$ of subsets of $X$, we write $\mathcal{U}|_A$ to mean the family $\{A \cap U \mid U \in \mathcal{U} \}$.

\begin{Lemma} \label{finiteunion}
Let $X$ be an ls-space. If $X = A \cup B$ for subsets $A$ and $B$ of $X$, and $A$ and $B$ each have asymptotic dimension zero as a subspace, then $X$ has asymptotic dimension zero.
\end{Lemma}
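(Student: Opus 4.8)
The plan is to adapt the proof of the Finite Union Theorem of Bell--Dranishnikov \cite{BellDran} to ls-spaces. Throughout I use the characterisation recorded in Section~\ref{light} that an ls-space $Z$ satisfies $\asdim Z \le 0$ if and only if for every uniformly bounded cover $\mathcal{U}$ of $Z$ the family of $\mathcal{U}$-components of $Z$ is uniformly bounded. So fix a uniformly bounded cover $\mathcal{U}$ of $X$ (which I may assume to be a cover of $X$, passing to the trivial extension if necessary). The goal is to build a single \emph{pairwise disjoint} uniformly bounded family $\mathcal{G}$ of subsets of $X$ which $\mathcal{U}$ refines; once this is done, any $\mathcal{U}$-chain in $X$ stays inside a single member of $\mathcal{G}$, so the $\mathcal{U}$-components of $X$ refine $\mathcal{G}$ and are therefore uniformly bounded.

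First I would invoke $\asdim A \le 0$: since $\mathcal{U}|_A$ is uniformly bounded in $A$, the family $\mathcal{V}$ of its $\mathcal{U}|_A$-components is a uniformly bounded partition of $A$, and $U \cap A$ lies in a single piece of $\mathcal{V}$ for each $U \in \mathcal{U}$. Having fixed $\mathcal{V}$, I would then apply $\asdim B \le 0$ \emph{at a larger scale}: letting $\mathcal{V}^+$ be the trivial extension of $\mathcal{V}$ to a cover of $X$ and $\mathcal{Y} = \st(\mathcal{V}^+,\mathcal{U})$, take $\mathcal{W}$ to be the family of $\st(\mathcal{Y},\mathcal{Y})|_B$-components of $B$; this is a uniformly bounded partition of $B$ no two of whose pieces are joined by a member of $\mathcal{Y}|_B$. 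For each $W \in \mathcal{W}$ form the thickening
$$
\widetilde{W} \;=\; W \;\cup\; \bigcup\{\, V \in \mathcal{V} \mid \text{some } U \in \mathcal{U} \text{ meets both } V \text{ and } W \,\},
$$
let $\mathcal{V}_{\mathrm{far}}$ be the set of $V \in \mathcal{V}$ with $\st(V,\mathcal{U}) \cap B = \emptyset$, and put $\mathcal{G} = \{\widetilde{W} \mid W \in \mathcal{W}\} \cup \mathcal{V}_{\mathrm{far}}$.

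The routine verifications would be: (i) $\mathcal{G}$ covers $X$, since a point of $B$ lies in some $W \subseteq \widetilde{W}$ and a point of $A\setminus B$ lies in some $V \in \mathcal{V}$ that is either in $\mathcal{V}_{\mathrm{far}}$ or, being met by a $U$ meeting $B$, lies in some $\widetilde{W}$; (ii) $\mathcal{G}$ is uniformly bounded, since $\widetilde{W}\subseteq \st(W,\mathcal{Y})$ so $\{\widetilde W\}$ refines $\st(\mathcal{W},\mathcal{Y})$, while $\mathcal{V}_{\mathrm{far}}\subseteq\mathcal{V}$; (iii) $\mathcal{U}$ refines $\mathcal{G}$, since for $U\in\mathcal{U}$ the set $U\cap A$ lies in a single $V_0\in\mathcal{V}$ and $U\cap B$ in a single $W_0\in\mathcal{W}$, and a short case analysis on whether $U$ meets $A$ and/or $B$ puts $U$ inside one member of $\mathcal{G}$ (if $U$ meets both then $U$ itself witnesses $V_0\subseteq\widetilde{W_0}$, so $U\subseteq\widetilde{W_0}$). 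The crux --- and the step I expect to be the main obstacle --- is (iv), that $\mathcal{G}$ is \emph{pairwise disjoint}: a common point of two distinct thickenings would, after locating which part of the defining union it belongs to, produce two distinct pieces of $\mathcal{W}$ joined by a member of $\mathcal{Y}|_B$, contradicting the large-scale choice of $\mathcal{W}$; a common point of a thickening $\widetilde{W}$ and some $V\in\mathcal{V}_{\mathrm{far}}$ either lies in $V\cap B\subseteq\st(V,\mathcal{U})\cap B$, contradicting $V\in\mathcal{V}_{\mathrm{far}}$, or identifies $V$ with one of the $\mathcal{V}$-pieces absorbed into $\widetilde{W}$, again contradicting $V\in\mathcal{V}_{\mathrm{far}}$; and distinct members of $\mathcal{V}_{\mathrm{far}}$ are disjoint because $\mathcal{V}$ is a partition. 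Granting (i)--(iv), each $\mathcal{U}$-component of $X$ lies in one member of the pairwise disjoint family $\mathcal{G}$, hence the $\mathcal{U}$-components are uniformly bounded and $\asdim X \le 0$.
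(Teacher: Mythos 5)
Your proof is correct, and it takes a noticeably different route from the paper's. Both arguments are adaptations of the Bell--Dranishnikov Finite Union Theorem and both hinge on applying asymptotic dimension zero at two scales, but the mechanisms differ. The paper works directly with $\mathcal{U}$-chains in $X$: it takes the $\mathcal{U}|_A$- and $\mathcal{U}|_B$-components $\mathcal{V}_A$, $\mathcal{V}_B$, observes that points of $A$ joined by a $\mathcal{U}$-chain in $X$ lie in a common $\st(\st(\mathcal{V}_B,\mathcal{U})|_A,\mathcal{V}_A)$-component of $A$ (so it re-applies $\asdim A=0$ at this coarser scale), and then only sketches the assembly of a uniformly bounded coarsening of the $\mathcal{U}$-components of $X$. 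You instead apply $\asdim A=0$ once and $\asdim B=0$ at the larger scale determined by $\mathcal{V}$, and then run the classical ``saturated union'' construction, producing an explicit pairwise disjoint uniformly bounded cover $\mathcal{G}$ coarsening $\mathcal{U}$ (i.e.\ the multiplicity-one characterisation of $\asdim\le 0$ from the paper's own definition), from which the confinement of $\mathcal{U}$-chains is immediate. What your approach buys is a complete, self-contained verification --- indeed more detailed than the paper's proof, which ends with ``using this and similar arguments one can construct\dots'' --- and a reusable disjoint coarsening; what the paper's buys is brevity and no need for the disjointness bookkeeping in your step (iv). Your checks (i)--(iv) do go through: the only point worth noting is that passing to $\st(\mathcal{Y},\mathcal{Y})|_B$-components is a harmless overshoot, since your disjointness argument only ever exhibits a single member of $\mathcal{Y}|_B$ meeting two pieces of $\mathcal{W}$, so $\mathcal{Y}|_B$-components would already suffice; also note that taking the trivial extension $\mathcal{V}^+$ is exactly what guarantees $\mathcal{U}\le\mathcal{Y}$, which your case analysis in (iii) silently uses when placing $U\cap B$ in a single piece of $\mathcal{W}$.
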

\begin{proof}
Let $\mathcal{U}$ be a uniformly bounded cover of $X$. Let $\mathcal{V}_A$ and $\mathcal{V}_B$ be the families of $\mathcal{U}|_A$-components and $\mathcal{U}|_B$-components of $A$ and $B$ respectively. Then $\mathcal{V}_A$ and $\mathcal{V}_B$ are uniformly bounded by hypothesis. If $a \in A$ is $\mathcal{U}$-connected to $a' \in A$, then it is easy to see that $a$ is in the same $\st(\st(\mathcal{V}_B, \mathcal{U})|_A, \mathcal{V}_A)$-component of $A$ as $a'$. Note that the family $\mathcal{W}_1$ of $\st(\st(\mathcal{V}_B, \mathcal{U})|_A, \mathcal{V}_A)$-components of $A$ is uniformly bounded as a family in $X$. Using this and similar arguments one can construct a uniformly bounded family $\mathcal{W}$ in $X$ which coarsens the family of $\mathcal{U}$-components of $X$, which gives the required result.
\end{proof}

We will also need the following generalization of Corollary 1.19 in \cite{Roe}.

\begin{Lemma} \label{groupembed}
Let $h: A \rightarrow X$ be an inclusion of a subgroup $A$ into a group $X$. Then $f$ is a coarse embedding.
\end{Lemma}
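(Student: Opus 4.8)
The plan is to verify the definition of coarse embedding directly against the two group ls-structures. Recall that a uniformly bounded cover of $X$ is exactly one that refines some $\{xF \mid x \in X\}$ with $F \subseteq X$ finite, and similarly for $A$ with $F$ a finite subset of $A$; we may as well take $F$ symmetric and containing the identity. Since $h$ is the set inclusion, ls-continuity is immediate: a uniformly bounded cover of $A$ refines $\{aF \mid a \in A\}$ for some finite $F \subseteq A \subseteq X$, and its image refines the uniformly bounded cover $\{xF \mid x \in X\}$ of $X$. So the substance of the lemma is to show that $h^{-1}(\mathcal{V}) = \{A \cap V \mid V \in \mathcal{V}\}$ is uniformly bounded in $A$ for every uniformly bounded family $\mathcal{V}$ in $X$.

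First I would reduce to the case where $\mathcal{V}$ refines a canonical cover $\{xF \mid x \in X\}$ with $F$ a finite symmetric subset of $X$. The key point, already implicit in the proof of Lemma~\ref{asdimgroups}, is that if $x$ and $y$ lie together in some $V \in \mathcal{V}$, then $x^{-1}y \in FF$. Set $G := A \cap FF$, a finite subset of $A$. Given $V \in \mathcal{V}$ with $A \cap V \ne \emptyset$, pick $a_0 \in A \cap V$; then for every $a \in A \cap V$ we have $a_0^{-1}a \in FF$, and since $a_0, a \in A$ also $a_0^{-1}a \in A$, so $a_0^{-1}a \in G$ and $a \in a_0 G$. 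Hence $A \cap V \subseteq a_0 G$, so $h^{-1}(\mathcal{V})$ refines the cover $\{aG \mid a \in A\}$, which is uniformly bounded in $A$ since $G$ is finite.

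The only place where the argument needs care is that $FF$ is a finite subset of $X$ but generally not of $A$; the remedy is precisely to pass to $G = A \cap FF$ and to use that the relevant group element $a_0^{-1}a$ automatically lies in $A$. Everything else is routine manipulation of the definition of the group ls-structure. In effect this argument shows that the subspace ls-structure which $A$ inherits from $X$ coincides with the intrinsic group ls-structure of $A$, so the lemma could equivalently be phrased as saying that the group ls-structure on a subgroup agrees with the subspace structure, from which the statement is immediate since inclusions of subspaces are always coarse embeddings.
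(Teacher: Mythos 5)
Your proof is correct, and it takes a somewhat different route from the paper's. The paper also reduces to the canonical covers $\{x\cdot F \mid x \in X\}$, but then builds the finite subset of $A$ by choosing a set $S$ of left-coset representatives of $A$ in $X$, passing to the finitely many $t \in S$ with $F \cap tA \neq \varnothing$, and setting $F' = (\bigcup_{t} t^{-1}\cdot F) \cap A$; it then rewrites $a = xf$ as $a = (xt)(t^{-1}f)$ with $xt \in A$ and $t^{-1}f \in F'$ to show $\{x\cdot F \cap A \mid x \in X\}$ refines $\{a\cdot F' \mid a \in A\}$. You instead avoid coset representatives entirely: after symmetrizing $F$, you pick a basepoint $a_0 \in A \cap V$ and observe that the ``difference'' $a_0^{-1}a$ of any two elements of $A \cap V$ lies in $A \cap FF$, so the trace of the cover refines $\{a\cdot G \mid a \in A\}$ with $G = A \cap FF$. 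Both arguments correctly address the one nontrivial point (the finite set controlling the cover must live in $A$, not just in $X$); yours is arguably more elementary, needing no choice of coset representatives, while the paper's construction of $F'$ is a direct translation-by-translation rewriting. Your closing observation that the argument really shows the subspace ls-structure on $A$ induced from $X$ coincides with the intrinsic group ls-structure of $A$ is also accurate and is a clean way to package the lemma.
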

\begin{proof}
Let $F$ be a finite subset in $X$. Pick a set of representatives $S$ for the left cosets of $A$ in $X$, and let $T$ be the subset of $S$ consisting of all those $s \in S$ such that $F \cap s A \neq \varnothing$. Clearly $T$ is finite. Let $F$ be the finite set $F ' = (\bigcup_{t \in T} t^{-1} \cdot F) \cap A$. If $a$ is an element of $A$ with $a = xf \in x \cdot F$ for some $x \in X$, $f \in F$, then $f = x^{-1}a \in F$, so we may pick a $t \in T$ in the same left coset of $A$ as $x^{-1}$. Then $a = xtt^{-1}f \in xt \cdot F'$ with $xt \in A$, and hence also $t^{-1}f \in A$. Thus we have that $\{ x \cdot F \cap A \mid x \in X\}$ refines the uniformly bounded family $\{ a \cdot F' \mid a \in A\}$ as required. 
\end{proof}

We are now ready to present a characterisation of those group homomorphisms which are coarsely light as maps between ls-spaces.

\begin{Proposition} 
Let $f: X \rightarrow Y$ be a group homomorphism. Then $f$ is coarsely light if and only if $\mathsf{ker}(f)$ has asymptotic dimension zero, or equivalently, if $\mathsf{ker}(f)$ is locally finite. 
\end{Proposition}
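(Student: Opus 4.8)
The plan is to prove both directions using the characterizations already established, in particular Proposition~\ref{lightequiv} (coarsely light $\Leftrightarrow$ $f^{-1}(\mathcal{V})$ satisfies $\asdim \leq 0$ uniformly), Lemma~\ref{asdimgroups} (asymptotic dimension zero $\Leftrightarrow$ locally finite, for groups), and Lemma~\ref{finiteunion} (the finite union theorem for $\asdim 0$). The equivalence ``$\ker(f)$ has asymptotic dimension zero'' $\Leftrightarrow$ ``$\ker(f)$ is locally finite'' is immediate from Lemma~\ref{asdimgroups}, so the content is the first equivalence.

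For the $(\Rightarrow)$ direction, suppose $f$ is coarsely light. Note that $\ker(f) = f^{-1}(\{e_Y\})$, and $\{e_Y\}$ is an element of the uniformly bounded cover $\{y \cdot \{e_Y\} \mid y \in Y\}$ of $Y$. By Proposition~\ref{lightequiv}, the family $f^{-1}(\mathcal{V})$ satisfies $\asdim \leq 0$ uniformly for any uniformly bounded cover $\mathcal{V}$ of $Y$; taking $\mathcal{V} = \{y \cdot \{e_Y\}\}$ gives in particular that $f^{-1}(\{e_Y\}) = \ker(f)$ has asymptotic dimension zero as a subspace of $X$. Since the subspace ls-structure on $\ker(f)$ induced from $X$ agrees with its intrinsic group ls-structure (one can see this directly: a cover $\{x \cdot F \mid x \in X\}$ of $X$ restricts on $\ker(f)$ to something refined by $\{a \cdot (F \cdot F \cap \ker f) \mid a \in \ker f\}$, and conversely, using that $\ker f$ is a subgroup — alternatively invoke Lemma~\ref{groupembed}), we conclude $\ker(f)$ has asymptotic dimension zero as a group, hence is locally finite by Lemma~\ref{asdimgroups}.

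For the $(\Leftarrow)$ direction, suppose $\ker(f)$ is locally finite, equivalently $\asdim \ker(f) = 0$. I want to show $f$ is coarsely light, i.e.~(by Proposition~\ref{lightequiv}) that $f^{-1}(\mathcal{V})$ satisfies $\asdim \leq 0$ uniformly for every uniformly bounded cover $\mathcal{V}$ of $Y$. It suffices to treat $\mathcal{V} = \{y \cdot F_Y \mid y \in Y\}$ for $F_Y$ a finite subset of $Y$. The key structural point is that each $f^{-1}(y \cdot F_Y)$ is a union of finitely many left cosets of $\ker(f)$ in $X$: indeed $f^{-1}(y \cdot F_Y) = \bigcup_{t \in F_Y \cap \im f} f^{-1}(y) \cdot (\text{something}) $, more precisely if nonempty it equals a union of at most $|F_Y|$ sets each of the form $x_0 \ker(f)$. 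Each such coset, as a subspace of $X$, has asymptotic dimension zero (it is a left translate of $\ker f$, and left translations are coarse equivalences of $X$; combined with the fact, used above, that the subspace structure on $\ker f$ is its group structure). Then Lemma~\ref{finiteunion}, applied finitely many times, shows each $f^{-1}(y \cdot F_Y)$ has asymptotic dimension zero, and moreover one can carry this out \emph{uniformly} in $y$: because $X$ is a homogeneous ls-space, the bound produced by the finite union argument for one fibre $f^{-1}(e \cdot F_Y)$ transports by left translation to all the others simultaneously. This gives that $\{f^{-1}(y \cdot F_Y) \mid y \in Y\}$ satisfies $\asdim \leq 0$ uniformly, as required.

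The main obstacle I expect is the uniformity in the $(\Leftarrow)$ direction: Lemma~\ref{finiteunion} as stated gives $\asdim 0$ for a single space $A \cup B$, not a uniform bound over a family of such decompositions. The fix is to be careful that the construction in the proof of Lemma~\ref{finiteunion} is left-translation-equivariant when applied to the fibres over $y \cdot F_Y$, which are all left translates of the fibre over $F_Y$; since the number of cosets involved is bounded by $|F_Y|$ uniformly, iterating a translation-equivariant version of the finite union argument $|F_Y|$ times yields a single uniformly bounded coarsening of the $\mathcal{U}$-components of $\bigsqcup_y f^{-1}(y \cdot F_Y)$. A secondary, more routine point is verifying that the subspace ls-structure induced on a subgroup (or a coset) coincides with its natural group ls-structure, which follows from Lemma~\ref{groupembed} together with the observation that left translation by a fixed element is a coarse equivalence of $X$.
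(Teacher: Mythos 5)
Your $(\Rightarrow)$ direction, and the overall shape of your $(\Leftarrow)$ direction (each fibre of $f^{-1}(\mathcal{V})$ is a finite union of left cosets of $\ker(f)$, apply Lemma~\ref{finiteunion}, and get uniformity from left translation), coincide with the paper's argument. The gap is in the uniformity step: you assert, twice, that the fibres $f^{-1}(y\cdot F_Y)$ are all left translates of the single fibre $f^{-1}(F_Y)$, and this is false when $f$ is not surjective. For example, for $f\colon \ZZ \to \ZZ$, $n \mapsto 2n$, and $F_Y = \{0,1,2\}$ one has $f^{-1}(F_Y) = \{0,1\}$ but $f^{-1}(1+F_Y) = \{1\}$, which is not a translate of $\{0,1\}$. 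In general, for $x \in f^{-1}(yF_Y)$ one gets $x^{-1}f^{-1}(yF_Y) = f^{-1}(f(x)^{-1}yF_Y)$ with $f(x)^{-1}y \in F_Y^{-1}$, so the fibres are translates of one of several model sets depending on which elements of $yF_Y$ meet the image of $f$; there is no single model set of the kind your homogeneity/transport argument requires, so as written that step fails for non-surjective homomorphisms.

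The paper removes this obstruction before it arises: it factors $f$ through its image, notes that the inclusion $\im(f) \rightarrow Y$ is a coarse embedding by Lemma~\ref{groupembed} and hence coarsely light, and uses closure of coarsely light maps under composition (Lemma~\ref{comp}) to reduce to the case of surjective $f$. For surjective $f$ the identity $f^{-1}(\{y\cdot F \mid y \in Y\}) = \{x \cdot f^{-1}(F) \mid x \in X\}$ does hold, and then the translation argument (which the paper, like you, treats briefly via left invariance of the group ls-structure) gives $\asdim \leq 0$ uniformly. Alternatively, you could repair your version without that reduction by observing that every nonempty fibre $f^{-1}(yF_Y)$ is contained in a left translate of the fixed set $f^{-1}(F_Y^{-1}F_Y)$, which is a finite union of cosets of $\ker(f)$ and hence has asymptotic dimension zero by Lemma~\ref{finiteunion}, and that subsets of left translates of a fixed asymptotic-dimension-zero subspace satisfy $\asdim \leq 0$ uniformly. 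Either fix is short, but some such uniform model for the fibres is needed; the rest of your proposal (the use of Proposition~\ref{lightequiv}, Lemma~\ref{asdimgroups}, and Lemma~\ref{groupembed} to identify the subspace and group structures on $\ker(f)$) matches the paper.
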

\begin{proof}
Note that by Lemma \ref{groupembed}, we can either consider $\mathsf{ker}(f)$ as a group itself or as a subspace of $X$ since the ls-structure is the same in each case.

$(\Rightarrow)$ This follows from Proposition \ref{lightequiv} since the set consisting only of the identity in $Y$ has asymptotic dimension zero.

$(\Leftarrow)$ By Lemma \ref{groupembed}, the inclusion of the image of $f$ into $Y$ is a coarse embedding, and hence coarsely light. Since coarsely light maps are closed under composition, it is sufficient to consider the case when $f$ is surjective. Let $F$ be a finite set in $Y$. Then $f^{-1}(F)$ is a union of $|F|$ copies of the kernel of $f$, so by Lemma \ref{finiteunion}, $f^{-1}(F)$ has asymptotic dimension zero. Since
$$
\{x \cdot f^{-1}(F) \mid x \in X\} = f^{-1} (\{y \cdot F \mid y \in Y \})
$$ 
when $f$ is surjective, the family of inverse images of the family $\{y \cdot F \mid y \in Y \}$ is a family of left translates of $f^{-1}(F)$, and so satisfies $\asdim = 0$ uniformly. Thus by Lemma \ref{lightequiv}, $f$ is coarsely light.
\end{proof}

\begin{Corollary}
Let $f: X \rightarrow Y$ be a group homomorphism whose kernel is locally finite. Then $X$ has finite asymptotic dimension if $Y$ does. If both $X$ and $Y$ are countable, then $X$ has Property A if $Y$ does.
\end{Corollary}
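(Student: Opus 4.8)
The plan is to obtain both assertions as immediate consequences of the Proposition immediately above, together with the permanence results proved earlier in the paper. First I would note that, since $\mathsf{ker}(f)$ is locally finite, that Proposition tells us $f$ is coarsely light as a map of ls-spaces. For the statement about asymptotic dimension, Proposition \ref{lightasdim} then gives $\asdim(X) \le \asdim(Y)$, so $X$ has finite asymptotic dimension whenever $Y$ does; note that this half needs no countability hypothesis.

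For the Property A statement, the point is to verify that a countable group equipped with the canonical ls-structure of this section is a metric space of bounded geometry, so that Corollary \ref{PropertyA} applies. By the remarks at the start of the section, this ls-structure is induced by a proper left-invariant metric, hence it is metrizable; and for each $R > 0$ the uniformly bounded cover $\{x \cdot F \mid x \in X\}$ with $F = B(e, R)$ has all of its members of cardinality $|F|$, which is finite by properness of the metric, so $X$ has bounded geometry. The same holds for $Y$. Since $f$ is coarsely light, Corollary \ref{PropertyA} then yields that $X$ has Property A whenever $Y$ does.

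I do not anticipate any real obstacle here: the only step requiring a line of verification is the bounded geometry of a countable group, which follows at once from properness and left-invariance of the metric. Everything else is a formal application of the Proposition above and the permanence theorems of Section \ref{secPreserve}, so the corollary should follow in a few lines.
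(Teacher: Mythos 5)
Your proposal is correct and is essentially the argument the paper intends: the preceding Proposition gives that $f$ is coarsely light, Proposition \ref{lightasdim} handles finite asymptotic dimension, and Corollary \ref{PropertyA} handles Property A. Your added verification that a countable group with a proper left-invariant metric has bounded geometry is exactly the reason for the countability hypothesis and fills in the only detail the paper leaves implicit.
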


\begin{Lemma}
Let $X$ be a group. Then $X$ is finitely generated if and only if $X$ is $\mathcal{U}$-connected for some uniformly bounded family $\mathcal{U}$. 
\end{Lemma}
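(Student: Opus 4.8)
The plan is to prove both implications directly, using the explicit description of the ls-structure on a group in terms of the covers $\mathcal{U}_F=\{x\cdot F\mid x\in X\}$ with $F$ a finite subset of $X$, and reusing the elementary combinatorial observations already recorded in the proof of Lemma~\ref{asdimgroups}.

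For the forward implication I would assume $X$ is finitely generated, fix a finite generating set and enlarge it to a finite set $F$ containing the identity and closed under inverses, and then take $\mathcal{U}=\mathcal{U}_F$, which is uniformly bounded by definition of the ls-structure. The key point is that the identity $e$ is $\mathcal{U}$-connected to every element: writing an arbitrary $x\in X$ as a word $s_1s_2\cdots s_k$ in the generators, the consecutive partial products $s_1\cdots s_i$ and $s_1\cdots s_{i+1}$ both lie in $(s_1\cdots s_i)\cdot F$ since $e,s_{i+1}\in F$, which produces a $\mathcal{U}$-chain from $e$ to $x$. As $\mathcal{U}$-connectedness is an equivalence relation, concatenating two such chains through $e$ shows that any two elements of $X$ are $\mathcal{U}$-connected.

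For the reverse implication I would assume $X$ is $\mathcal{U}$-connected for some uniformly bounded family $\mathcal{U}$. Since $\mathcal{U}$ is uniformly bounded it refines some $\mathcal{U}_F$ with $F$ finite, and hence $\mathcal{U}$-connectedness implies $\mathcal{U}_F$-connectedness. I would then invoke the observation from the proof of Lemma~\ref{asdimgroups} that $x\,\mathcal{U}_F\,y$ forces $x^{-1}y\in F^{-1}F$, and hence inductively that $\mathcal{U}_F$-connectedness of $x$ and $y$ implies $x^{-1}y\in\langle F\rangle$. Taking $x=e$ and letting $y$ range over $X$ yields $X=\langle F\rangle$, so $X$ is finitely generated.

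I do not expect a genuine obstacle here: each direction is short and the decisive combinatorial fact is already available from Lemma~\ref{asdimgroups}. The only place requiring minor care is the bookkeeping with inverses --- whether one works with $F$, with $F^{-1}F$, or with a symmetrized generating set --- but since $\langle F\rangle=\langle F\cup F^{-1}\rangle$ this has no effect on either conclusion.
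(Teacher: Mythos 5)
Your proposal is correct and follows essentially the same route as the paper: in the reverse direction the paper also coarsens $\mathcal{U}$ to a cover of the form $\{x\cdot F\mid x\in X\}$ and reuses the observation from Lemma~\ref{asdimgroups} that $\mathcal{U}$-connectedness of $x$ and $y$ forces $x^{-1}y\in\langle F\rangle$, so that the $\mathcal{U}$-component of the identity lies in $\langle F\rangle$. The only cosmetic difference is in the forward direction, where the paper simply notes that the word-length metric makes $X$ $1$-connected, while you spell out the same fact via partial products; this is a matter of explicitness, not of substance.
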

\begin{proof}
If $X$ is finitely generated, then its ls-structure is generated by a word-length metric, under which $X$ is clearly $1$-connected. Conversely, suppose $X$ is $\mathcal{U}$-connected, where $\mathcal{U} = \{x \cdot F \mid x \in X\}$. We claim that $F$ generates $X$. Indeed, as in the proof of Lemma \ref{asdimgroups}, $x$ and $y$ are $\mathcal{U}$-connected if and only if $x^{-1}y \in \langle F \rangle$, so that in particular, every $x \in X$, being in the $\mathcal{U}$-component of the identity, is in $\langle F \rangle$.
\end{proof}

\begin{Corollary}
A group $X$ is finitely generated if and only if the unique map from $X$ to the trivial group is coarsely monotone, and locally finite if and only if the unique map from $X$ to the trivial group is coarsely light.
\end{Corollary}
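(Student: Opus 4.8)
The plan is to reduce the Corollary to two facts already available: Lemma~\ref{asdimgroups} (a group has asymptotic dimension zero iff it is locally finite) and the Lemma immediately preceding this Corollary (a group is finitely generated iff it is $\mathcal{U}$-connected for some uniformly bounded cover $\mathcal{U}$). Write $u\colon X\to\mathbf{1}$ for the unique ls-continuous map to the trivial group, viewed as a one-point ls-space; note $X$ is nonempty and $\mathbf{1}$ is a bounded space. It then suffices to prove the two equivalences
(i) $u$ is coarsely light $\iff$ $\asdim X=0$, and
(ii) $u$ is coarsely monotone $\iff$ $X$ is $\mathcal{U}$-connected for some uniformly bounded cover $\mathcal{U}$;
combining (i) with Lemma~\ref{asdimgroups} and (ii) with the preceding Lemma yields the Corollary.

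For (i): if $\asdim X=0$, then by Proposition~\ref{mapsto1} every ls-continuous map out of $X$ is coarsely light (this is also the first bullet of the Example following Proposition~\ref{lightequiv}), so in particular $u$ is coarsely light. Conversely, suppose $u$ is coarsely light. Use the implication (a)$\Rightarrow$(b) of Proposition~\ref{lightequiv} with $Y=\mathbf{1}$: since the only uniformly bounded cover of a one-point space is $\{\mathbf{1}\}$ (up to adjoining empty sets), we get that $u^{-1}(\{\mathbf{1}\})=\{X\}$ satisfies $\asdim\le 0$ uniformly, which by definition means $\asdim X\le 0$; as $X$ is nonempty, $\asdim X=0$.

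For (ii): first, $u$ is coarsely surjective because $X$ is nonempty. If $X$ is $\mathcal{U}$-connected for some uniformly bounded cover $\mathcal{U}$, then by Proposition~\ref{mapsto1} every ls-continuous map from $X$ to a bounded space is coarsely monotone, and $\mathbf{1}$ is such a space, so $u$ is coarsely monotone. Conversely, if $u$ is coarsely monotone, apply Lemma~\ref{mono} with $Y=\mathbf{1}$ and $\mathcal{V}=\{\mathbf{1}\}$: we obtain a uniformly bounded family $\mathcal{U}$ in $X$ and a family $\mathcal{T}$ of $\mathcal{U}$-connected subsets of $X$ which coarsens $u^{-1}(\mathcal{V})=\{X\}$. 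Hence some member of $\mathcal{T}$ contains $X$; being a subset of $X$, that member equals $X$, and it is $\mathcal{U}$-connected, so $X$ is $\mathcal{U}$-connected for a uniformly bounded cover. (Equivalently, one can check directly that the light structure on $X$ with respect to $u$ is precisely the structure in which a family is uniformly bounded iff it refines the family of $\mathcal{U}$-components of $X$ for some uniformly bounded $\mathcal{U}$ --- i.e.\ it is the reflection $I(X)$ of an earlier section --- and that the light-part $u'\colon X_u\to\mathbf{1}$ is a coarse equivalence exactly when $X_u$ is bounded, i.e.\ when $X$ is $\mathcal{U}$-connected.)

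I do not expect a genuine obstacle here: the whole argument is an exercise in specializing Lemma~\ref{mono} and Proposition~\ref{lightequiv} (equivalently Proposition~\ref{mapsto1}) to the degenerate target $\mathbf{1}$. The only points needing a moment's care are that coarse surjectivity of $u$ is automatic (using that groups are nonempty), that the one-element cover is the only uniformly bounded cover of $\mathbf{1}$ that matters, and that the two group-theoretic lemmas are invoked in the correct direction.
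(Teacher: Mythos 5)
Your proposal is correct and follows essentially the route the paper intends: the Corollary is stated without proof as an immediate consequence of Lemma~\ref{asdimgroups}, the preceding finitely-generated lemma, and the characterizations of coarsely light and coarsely monotone maps (Proposition~\ref{mapsto1}, Proposition~\ref{lightequiv}, Lemma~\ref{mono}) specialized to the one-point target. Your careful filling-in of the converse directions (which Proposition~\ref{mapsto1} alone does not literally give, since it quantifies over all maps) via Proposition~\ref{lightequiv} and Lemma~\ref{mono} is exactly the right way to make that implicit argument explicit.
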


\end{document}